     \def\section{\@startsection{section}{1}%
     \z@{.7\linespacing\@plus\linespacing}{.5\linespacing}%
     {\bfseries
     \centering
     }}
     \def\@secnumfont{\bfseries}
\newcommand{\R}{\mathbb R}
\newcommand{\RR}{\mathbb R}
\newcommand{\E}{\mathbb E}
\newcommand{\1}{{\bf 1}}
\newcommand{\e}{{\varepsilon}}
\def\R{\mathbb{R}}
\def\P{\mathbb{P}}
\def\wt{\widetilde}
\def\H{\mathfrak{H}}
\def\F{\mathscr{F} }
 \def\E{\mathbb{E}}
\def\to{\rightarrow}
\def\eqd{\overset{(d)}{=}}
 \def\e{\varepsilon}
\def\w{\mathbf{w}}
\def\H{\mathfrak{H}}
\newcommand{\HH}{\mathfrak H}
\newtheorem{theorem}{Theorem}[section]
\newtheorem{lemma}[theorem]{Lemma}
\newtheorem{proposition}[theorem]{Proposition}
\theoremstyle{definition}
\theoremstyle{remark}
\newtheorem{remark}{Remark}
\numberwithin{equation}{section}
\begin{document}

\title[Averaging 2d SWE]{Averaging 2d stochastic wave equation}

 \date{\today}

 \author[R. Bola\~ nos Guerrero]{Raul Bola\~ nos Guerrero}

\author[D. Nualart]{David Nualart} \thanks {David Nualart is supported by NSF Grant DMS 1811181.}

\author[G. Zheng]{Guangqu Zheng} \thanks{Emails: $\{{\rm rbolanos,  nualart}\}$@ku.edu,   zhengguangqu@gmail.com. }

\maketitle

\vspace{-0.5cm}

\begin{center}  {\it\small Department of Mathematics, University of Kansas} \end{center}

\begin{abstract}
We consider a 2D stochastic wave  equation driven by a   Gaussian    noise, which is temporally white and  spatially colored described by the Riesz kernel. Our first main result is the functional central limit theorem for the  spatial average of the solution. And we also establish a quantitative central limit theorem for the marginal and the rate of convergence is described by the total-variation distance. A fundamental ingredient in our proofs is the pointwise $L^p$-estimate of Malliavin derivative, which is of independent interest.

 \end{abstract}

\medskip\noindent
{\bf Mathematics Subject Classifications (2010)}: 	60H15, 60H07, 60G15, 60F05.

\medskip\noindent
{\bf Keywords:} Stochastic wave equation, Riesz kernel, central limit theorem, Malliavin-Stein method. 

\allowdisplaybreaks

\section{Introduction}

We consider the 2D stochastic wave equation  
\begin{equation}
\label{2dSWE}
\frac{\partial^2 u}{\partial t^2} =  \Delta u  + \sigma(u)   \dot{W},
\end{equation}
on $\R_+\times \R^2$, where  $\Delta$ is Laplacian in  the space variables and $\dot{W}$ is a Gaussian centered noise  with  covariance given by   
\begin{equation}
\E[ \dot{W}(t,x)\dot{W}(s,y)    ] =  \delta_0(t-s)  \| x-y\|^{-\beta}  \label{cov}
\end{equation}
 for any given $\beta\in(0,2)$. In other words, the driving   noise $\dot{W}$    is  white  in time and   it has an homogeneous spatial covariance described by the Riesz kernel.  Here  $ \dot{W}$  is a distribution-valued field and is a notation for $\frac {\partial ^3 W}{ \partial t \partial x_1 \partial x_2}$, where the noise $W$ will be formally introduced later.

Throughout this article, we   fix the boundary conditions 
\begin{align} \label{bdcond}
   u(0,x)=1, \quad    \frac {\partial } {\partial t} u(0,x)=0 \end{align}
and assume   $\sigma:\R\to\R$ is  Lipschitz  with Lipschitz constant $L \in( 0,\infty)$ such that $\sigma(1)\neq 0$.    It is well-known (see \emph{e.g.}  \cite{Dalang})
that  equation \eqref{2dSWE} has a unique  \emph{mild solution}, which is adapted to the filtration generated by $W$, such that $\sup\big\{  \E \big[  \vert u(t,x)\vert^2\big]:   (t,x) \in [0,T]\times \R^2\big\}      < \infty$ for any finite $T$ and  
\begin{equation}\label{mild}
 u(t,x) = 1+    \int_0^t \int_{\R^2}  G_{t-s}(x-y) \sigma(u(s,y))W(ds,dy), 
\end{equation}
where   the above stochastic integral is defined in the sense of Dalang-Walsh  (see \cite{Dalang99, Walsh}) and $G_{t-s}(x-y)$ denotes the fundamental solution to the corresponding deterministic 2D wave equation, \emph{i.e.} 
\[
G_t(x) = \frac{1}{2\pi \sqrt{t^2 - \| x\|^2}} \1_{\{\| x \| <t \}}. 
\]
Because of the choice of boundary conditions \eqref{bdcond},   $\{ u(t,x): x\in\R^2\}$ is strictly stationary  for any fixed $t>0$, meaning that the finite-dimensional distributions of  $\{u(t,x+y): x\in \R^2\}$ do not depend on $y$; see \emph{e.g.} \cite[Footnote 1]{DNZ18}.  Then it is  natural to view the solution $u(t,x)$ as a functional over the homogeneous Gaussian  random field $W$. Such  Gaussian functional has been a recurrent topic in probability theory, for example, the celebrated Breuer-Major theorem (see \emph{e.g.} \cite{BM83, CNN18, NZ-OSC}) provides the Gaussian fluctuation for  the average  of a functional subordinated to a stationary    Gaussian  random field.   Therefore, one may wonder whether or not the spatial average of $u(t,x)$ admits Gaussian fluctuation, that is,   as $R\to+\infty$
\begin{equation*} 
\text{\it does} ~   \int_ {\{\| x\| \leq R\} } (  u(t,x) -1)~dx ~\text{\it converge to $\mathcal{N}(0,1)$, after proper normalization?}
\end{equation*}
Here $t>0$ is fixed,  $u(t,x)$ solves \eqref{2dSWE} and $\mathcal{N}(0,1)$ denotes the standard normal distribution.

 Recently,  the above question has been investigated for stochastic heat equations (see \cite{CKNP19-2, HNV18, HNVZ19, NZ19}) and for the  1D stochastic wave equation (see \cite{DNZ18}). Our work can be seen as an extension of the work \cite{DNZ18} to the two-dimensional case.  In Theorem \ref{MAIN} below we provide an affirmative answer   to the above question and we will provide more literature overview in Remark \ref{rem3}.
    
      \medskip
 
Let   us first fix some notation that will be used {\it throughout  this article}.

\medskip

\noindent{\bf Notation.}  (1) The expression $a  \lesssim b$ means  
$a\leq K b$ for some immaterial constant $K$ that may vary from line to line.

 (2) $\| \cdot\|$ denotes the Euclidean norm on $\R^2$ and we write $B_R= \{ x: \| x\|\leq R\}$.   We define for each $t\in\R_+:=[0,\infty)$,
\begin{equation} \label{F_R}
F_R(t) =  \int_ {B_R}  (  u(t,x) -1 )~dx.
\end{equation}

(3) We fix $\beta\in(0,2)$   throughout this article and there are two relevant constants\footnote{Note that the quantity $\kappa_\beta$ is finite, since   $J_1( \rho)$ is uniformly bounded on $\R_+$ and equivalent to $\text{constant times}\, \rho$  as $\rho\downarrow 0$; see \emph{e.g.} \cite[Lemma 2.1]{NZ19}.} $c_\beta,\kappa_\beta$ defined by 
   \begin{align}\label{CKbeta}
 c_\beta= \dfrac{\Gamma(1- \frac{\beta}{2})}{\pi 4^{\beta/2} \Gamma(\beta/2) } ,  \qquad \kappa_\beta= \int_{\R^2}d\xi \| \xi\|^{\beta-4} J_1(\| \xi \|)^2,   
 \end{align}
where 
$J_1(\cdot)$ is the Bessel function of first kind with order $1$, given by   (see, for instance, \cite[(5.10.4)]{Lebedev72})
\begin{equation} \label{J1}
J_1(x)= \frac x{\pi} \int_0 ^\pi \sin^2\theta \cos(x\cos \theta) d\theta.
\end{equation}
 Note that    $4\pi^2c_\beta\kappa_\beta =\int_{B_1^2} \| y -z\|^{-\beta}dydz $; see Remark \ref{CKREM} below.

(4) We write $\| X\|_p$   for the $L^p(\Omega)$-norm of a real random variable $X$.

\smallskip

Now we are in a position to state our main result.

   \begin{theorem}\label{MAIN}   Recall  $F_R(t)$ defined in \eqref{F_R}.  As $R\to\infty$, the process $\big\{ R^{\frac{\beta}{2} -2} F_R(t): t\in\R_+\big\}$ converges in law to a centered Gaussian process $\mathcal{G}$ in the space $C(\R_+; \R)$ of continuous functions\footnote{The  space $C(\R_+; \R)$ is equipped with   the topology of uniform convergence on compact sets.}, where 
   \[
   \E\big[ \mathcal{G}_{t_1}  \mathcal{G}_{t_2} \big]   = 4\pi^2 c_\beta \kappa_\beta \int_0^{t_1\wedge t_2}   (t_1-s)(t_2-s) \xi^2(s) ds,
   \]
    with $\xi(s) = \E[ \sigma(u(s,0) ) ]$ and $c_\beta, \kappa_\beta$ being the two constants given in \eqref{CKbeta}.  For any fixed $t>0$,  
    \begin{align}\label{QCLT}
    d_{\rm TV}\big(  F_R(t)/\sigma_R, Z   \big) \lesssim R^{-\beta/2},
    \end{align}
  where $Z\sim \mathcal{N}(0, 1)$ and $\sigma_R := \sqrt{ {\rm Var}(F_R(t))  } > 0$ for every $R>0$.    
   \end{theorem}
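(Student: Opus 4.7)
The proof combines Dalang-Walsh stochastic integration with the Malliavin-Stein method. First, applying Fubini to \eqref{mild} gives the representation
\[
F_R(t)=\int_0^t\!\!\int_{\R^2} g_{t,R}(s,y)\,\sigma(u(s,y))\,W(ds,dy),\qquad g_{t,R}(s,y):=\int_{B_R} G_{t-s}(x-y)\,dx,
\]
whose second moment is a double spatial integral against the Riesz kernel $\|\cdot\|^{-\beta}$. Passing to Fourier variables with $\widehat{G_t}(\xi)=\sin(t\|\xi\|)/\|\xi\|$ and $\widehat{\mathbf{1}_{B_R}}(\xi)=2\pi R\,J_1(R\|\xi\|)/\|\xi\|$, invoking the Riesz--Parseval identity, and rescaling $\eta=R\xi$, the integrand becomes $R^{4-\beta}\|\eta\|^{\beta-6}\sin((t_1-s)\|\eta\|/R)\sin((t_2-s)\|\eta\|/R)J_1(\|\eta\|)^2$; using $\sin(a\|\eta\|/R)\sim a\|\eta\|/R$ collapses the weight onto the integrable $\|\eta\|^{\beta-4}J_1(\|\eta\|)^2$. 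Stationarity together with a covariance decomposition $\E[\sigma(u(s,y_1))\sigma(u(s,y_2))]=\xi(s)^2+\phi(s,y_1-y_2)$ (the correction $\phi$ controlled by a Poincar\'e/Malliavin covariance inequality) then produces the announced limit $4\pi^2 c_\beta\kappa_\beta\int_0^{t_1\wedge t_2}(t_1-s)(t_2-s)\xi^2(s)\,ds$ after normalizing by $R^{\beta/2-2}$.

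For the quantitative CLT I would invoke the Malliavin-Stein bound
\[
d_{\rm TV}(F/\sigma_F,Z)\leq \frac{2}{\sigma_F^2}\sqrt{\Var\bigl\langle DF,-DL^{-1}F\bigr\rangle_{\HH}}
\]
applied to $F=F_R(t)$. The Malliavin derivative satisfies the mild linear SPDE
\[
D_{s,y}u(t,x)=G_{t-s}(x-y)\sigma(u(s,y))+\int_s^t\!\!\int_{\R^2}G_{t-r}(x-z)\Sigma_{r,z}\,D_{s,y}u(r,z)\,W(dr,dz),
\]
with a random coefficient $\Sigma_{r,z}$ bounded in absolute value by the Lipschitz constant $L$. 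A Picard iteration combined with a Burkholder-Davis-Gundy estimate for the Dalang-Walsh integral should deliver the pointwise bound $\|D_{s,y}u(t,x)\|_p\le C_{p,T}\,G_{t-s}(x-y)$. Substituting this into the variance on the right-hand side of the Stein inequality, bounding each Malliavin factor by the corresponding $G$-kernel, and repeating on the Fourier side the calculation from Step~1 (now with four copies of $G$ and two Riesz kernels) yields the rate $R^{-\beta/2}$ after dividing by $\sigma_R^2\asymp R^{4-\beta}$.

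Tightness in $C(\R_+,\R)$ follows from Kolmogorov's criterion, via an estimate $\|R^{\beta/2-2}[F_R(t_2)-F_R(t_1)]\|_p\lesssim|t_2-t_1|^\alpha$ with $\alpha p>1$, again proved by Burkholder-Davis-Gundy applied to the difference written as a single Dalang-Walsh integral; convergence of finite-dimensional distributions follows from the multivariate Malliavin-Stein bound together with the covariance computation of Step~1. The main obstacle is the pointwise $L^p$ estimate on $D_{s,y}u(t,x)$: in 2D the kernel $G_t$ blows up near the light cone and the spatially singular Riesz noise compounds the difficulty, so the Picard iteration must carefully track singular integrals of the form $\int\!\!\int G_{t-r}(x-z)G_{t-r}(x-z')\|z-z'\|^{-\beta}\,dz\,dz'$ and close the recursion in $p$ uniformly in the small-scale singularity of $G$---this is precisely the independently-interesting estimate advertised in the abstract.
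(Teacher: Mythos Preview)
Your outline is essentially the paper's own strategy: Fourier computation of the limiting covariance (your rescaling $\eta=R\xi$ and $\sin(a\|\eta\|/R)\to a\|\eta\|/R$ is exactly Lemma~\ref{lemfact}), the pointwise estimate $\|D_{s,y}u(t,x)\|_p\lesssim G_{t-s}(x-y)$ via Picard iteration and BDG (Theorem~\ref{THM}), multivariate Malliavin--Stein for f.d.d.\ convergence, and Kolmogorov--Chentsov for tightness. You also correctly flag the 2D-specific obstacle, namely the light-cone singularity of $G_t$ interacting with the Riesz kernel; the paper resolves this with Hardy--Littlewood--Sobolev (Lemma~\ref{HLS}) and a delicate convolution estimate for $G_{t-r}^{2q}\ast G_{r-s}^{2q}$ (Lemmas~\ref{LEM1}--\ref{LEM2}).

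There is one genuine gap in your plan. You invoke the bound $d_{\rm TV}(F/\sigma_F,Z)\le \frac{2}{\sigma_F^2}\sqrt{\Var\langle DF,-DL^{-1}F\rangle_\H}$ and then propose to ``bound each Malliavin factor by the corresponding $G$-kernel.'' But $-DL^{-1}F_R(t)$ is not explicit, and nothing in your toolkit (the first-order estimate on $Du$) gives you control of it. The paper avoids this entirely by using the alternative Stein bound for divergence-form variables: since $F_R(t)=\delta(V_{t,R})$ with $V_{t,R}(s,y)=g_{t,R}(s,y)\sigma(u(s,y))$ adapted, one has $d_{\rm TV}(F_R/\sigma_R,Z)\le \frac{2}{\sigma_R^2}\sqrt{\Var\langle DF_R,V_{t,R}\rangle_\H}$ (Proposition after \eqref{eq1}). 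Now both factors are explicit; the commutation relation $D\delta=\mathrm{Id}+\delta D$ splits $\langle DF_R,V_{t,R}\rangle_\H$ into two pieces, and the variance of each is estimated using Clark--Ocone and the $G$-kernel bound, producing integrals with \emph{three} Riesz kernels (not two) and several $G$-factors that scale to $R^{8-3\beta}$. If you insist on the $-DL^{-1}$ route you would need second-order Malliavin derivatives and a Poincar\'e-type contraction argument, which is a substantially different and heavier computation than what you sketched.

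A minor point: your control of the covariance remainder $\phi(s,y_1-y_2)$ via a ``Poincar\'e/Malliavin covariance inequality'' is vague; the paper makes this concrete by writing $\sigma(u(s,y))$ via Clark--Ocone and applying It\^o isometry, which together with the $G$-kernel bound gives $|\phi(s,y_1-y_2)|\lesssim (\|y_1-y_2\|-2s)^{-\beta}$ for $\|y_1-y_2\|>2s$.
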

   
\begin{remark} (1)  The limiting process $\mathcal{G}$ has the following stochastic integral representation:  
\[
\left\{\mathcal{G}_t:  t\in\R_+\right\} \eqd \left\{2\pi \sqrt{c_\beta \kappa_\beta} \int_0^t (t-s) \xi(s) dY_s: t\in\R_+\right\},
\]
where $\{Y_t: t\in\R_+\}$  is a standard Brownian motion.

(2) We point out that  $\sigma_R>0$ is part of our main result. Indeed, it is a consequence of our standing assumption $\sigma(1)\neq 0$. In fact, we have the following equivalences:
\[
\sigma_R=0,~\forall R>0\Leftrightarrow \exists R>0,~ s.t.~ \sigma_R=0 \Leftrightarrow \sigma(1)=0 \Leftrightarrow \lim_{R\to\infty} \sigma^2_R R^{\beta-4}=0.
\]
The proof  can be done similarly as in \cite[Lemma 3.4]{DNZ18} and by using Proposition \ref{FRCOV}.

(3) The total-variation distance $d_{\rm TV}$ induces a much stronger topology than that induced by the Fortet-Mourier distance $d_{\rm FM}$, where the latter is equivalent to that of convergence in law.  For real random variables $X, Y$,
\[
d_{\rm TV}(X, Y) :=  \sup_A \big\vert \P(X\in A) - \P(Y\in A) \big\vert,\quad  d_{\rm FM}(X, Y) := \sup_{h} \big\vert  \E [ h(X) - h(Y)  ] \big\vert,
\]
where the first supremum runs over all Borel subsets of $\R$ and the second supremum runs overs  all bounded Lipschitz functions $h$ with  $\|h\|_\infty + \| h'\|_\infty \leq 1$.  Our quantitative CLT \eqref{QCLT} is obtained by the Malliavin-Stein approach that combines Stein's method of normal approximation with Malliavin's differential calculus on a Gaussian space; see the monograph \cite{NP} for a comprehensive treatment.  One can also obtain the rate of convergence in other frequently used distances, such as the 1-Wassertein distance and Kolmogorov distance, and the corresponding bounds are of the same order as in \eqref{QCLT}.
  \end{remark}

     Now let us   sketch  a few paragraphs to  briefly illustrate our methodology in proving  Theorem \ref{MAIN}. The main ingredient is  the following fundamental estimate on the $p$-norm of the Malliavin derivative  $Du(t,x)$  of the solution $u(t,x)$. It is well-known  (see \emph{e.g.} \cite{MilletMarta}) that $Du(t,x)\in L^p(\Omega; \H)$ for any $p\in[1,\infty)$, where $\H$ is the Hilbert space associated to the noise $W$, defined as  the completion of $C^\infty_c(\R_+\times\R^2)$ under the inner product
\begin{align}
\langle f, g\rangle_\H :&= \int_{\R_+\times\R^4} f(s, y) g(s, z) \| y-z\|^{-\beta} dydz ds  \label{defH}\\
&= c_\beta \int_{\R_+\times\R^2} \F f(s, \xi) \F g(s, -\xi) \| \xi\|^{\beta-2} d\xi  ds, \label{defH2}
\end{align}
where $c_\beta$ is given in \eqref{CKbeta} and $\F f(s, \xi) = \int_{\R^2} e^{-i x\cdot \xi} f(s, x)dx$.  
     
        \begin{theorem}\label{THM}   The Malliavin derivative $Du(t,x)$ is a random function denoted by $(s,y) \mapsto D_{s,y}u(t,x)$ and  for any $p\in[2,\infty)$ and any $t>0$, the following estimates hold   for almost all $(s,y) \in [0,t] \times \R^2$:        
 \begin{align}
 G_{t-s}(x-y) \| \sigma(u_{s,y})\|_p \leq   \big\| D_{s,y} u(t,x) \big\|_p  \leq   C_{\beta, p,t,L} \kappa_{p,t} G_{t-s}(x-y), \label{IMP}
 \end{align} 
 where the constants  $C_{\beta, p,t,L}$ and $ \kappa_{p,t}$ are given in \eqref{Cbeta} and \eqref{sigmap}, respectively.
    \end{theorem}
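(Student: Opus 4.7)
The starting point is the linear stochastic integral equation satisfied by the Malliavin derivative. Applying $D_{r,z}$ to the mild formulation \eqref{mild} and using the chain rule for the Lipschitz function $\sigma$ yields, for a.e.\ $(r,z)\in[0,t]\times\R^2$,
\begin{equation*}
D_{r,z}u(t,x) = G_{t-r}(x-z)\sigma(u(r,z)) + \int_r^t\!\!\int_{\R^2} G_{t-s}(x-y)\,\Sigma_{s,y}\, D_{r,z}u(s,y)\, W(ds,dy),
\end{equation*}
where $\Sigma_{s,y}$ is a predictable process bounded a.s.\ by $L$ (coming from the Lipschitz derivative of $\sigma$). Both estimates in \eqref{IMP} will be extracted from this linear equation.

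The lower bound is essentially free: the stochastic integral above is a Dalang-Walsh integral over $[r,t]\times\R^2$, so its conditional expectation given $\mathcal F_r$ vanishes. Hence $\E[D_{r,z}u(t,x)\mid \mathcal F_r] = G_{t-r}(x-z)\sigma(u(r,z))$, and conditional Jensen applied to $|\cdot|^p$ together with the non-negativity of $G_{t-r}$ gives $\|D_{r,z}u(t,x)\|_p \geq G_{t-r}(x-z)\,\|\sigma(u(r,z))\|_p$, which is the left-hand estimate in \eqref{IMP}.

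For the upper bound, the plan is to apply the Burkholder-Davis-Gundy inequality for the Dalang-Walsh martingale, use $|\Sigma_{s,y}|\leq L$, and then combine Minkowski's integral inequality with Cauchy-Schwarz at the level of $L^{p/2}(\Omega)$. Writing $\phi_p(t,x):=\|D_{r,z}u(t,x)\|_p$ and $\kappa_{p,t}:=\sup_{(s,y)\in[0,t]\times\R^2}\|\sigma(u(s,y))\|_p$ (finite by standard moment bounds for \eqref{2dSWE}), this produces a recursive inequality of the schematic form
\begin{equation*}
\phi_p(t,x)^2 \leq 2\kappa_{p,t}^2\, G_{t-r}(x-z)^2 + 2L^2 c_p^2 \int_r^t\!\!\int\!\!\int G_{t-s}(x-y)G_{t-s}(x-y')\phi_p(s,y)\phi_p(s,y')\|y-y'\|^{-\beta}\,dy\,dy'\,ds.
\end{equation*}

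Closing this into the pointwise bound $\phi_p(t,x)\leq C\, G_{t-r}(x-z)$ is where I expect the main technical difficulty. My plan is a Picard-style iteration: feeding the Ansatz $\phi_p(s,y)\leq C(s)G_{s-r}(y-z)$ into the right-hand side reduces matters to controlling
\[
J(t,r,x,z) := \int_r^t\!\!\int\!\!\int G_{t-s}(x-y)G_{t-s}(x-y')G_{s-r}(y-z)G_{s-r}(y'-z)\|y-y'\|^{-\beta}\,dy\,dy'\,ds
\]
by $K(t-r)\,G_{t-r}(x-z)^2$ for some tame (polynomial or exponential) factor $K$. I would attack $J$ via the Fourier representation \eqref{defH2}, exploiting the identity $\widehat{G}_t(\xi)=\sin(t\|\xi\|)/\|\xi\|$ together with Dalang's integrability condition $\int_{\R^2}\|\xi\|^{\beta-2}\widehat{G}_t(\xi)^2\,d\xi<\infty$ (valid for $\beta<2$). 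Iterating the Picard scheme and summing the resulting power series in $t-r$ then delivers the constant $C_{\beta,p,t,L}$ appearing in \eqref{IMP}.
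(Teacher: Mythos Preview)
Your lower bound argument is fine and matches the paper's Step~3 (the paper phrases it via Clark--Ocone, but the content is the same: the stochastic integral has zero conditional mean at time $r$, then conditional Jensen).

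The upper bound plan, however, has two genuine gaps.

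\textbf{The Fourier attack on $J$ will not work.} The identity $\widehat{G}_t(\xi)=\sin(t\|\xi\|)/\|\xi\|$ is useful only when a single factor of $G$ appears in each spatial variable. In $J$ the integrand in $y$ is the \emph{product} $G_{t-s}(x-y)G_{s-r}(y-z)$, whose Fourier transform is a convolution of two oscillatory kernels with no tractable closed form; you cannot reduce $J$ to $\int\|\xi\|^{\beta-2}\widehat{G}_t(\xi)^2\,d\xi$ this way. The paper instead applies Hardy--Littlewood--Sobolev (inequality \eqref{Sobolev}) to dominate the $\|y-y'\|^{-\beta}$ pairing by the $L^{2q}$ norm with $2q=4/(4-\beta)$, which reduces $J$ to a time integral of $\big(G_{t-s}^{2q}\ast G_{s-r}^{2q}\big)^{1/q}(x-z)$. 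Controlling this convolution pointwise by a power of $G_{t-r}(x-z)$ (Lemmas~\ref{LEM1} and \ref{LEM2}) is the technical heart of the proof and requires careful case analysis in real variables, not Fourier.

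\textbf{Even with the right bound on $J$, your iteration does not converge for all $t$.} Granting $J(t,r,x,z)\le K(t-r)\,G_{t-r}(x-z)^2$ (which the paper proves with $K(\tau)\sim\tau^{2/q-1}$), your recursion becomes $C(t)^2\le 2\kappa_{p,t}^2+c_p L^2 K(t-r)\,\sup_{s\le t}C(s)^2$, which closes only when $c_pL^2K(t-r)<1$, i.e.\ for small $t-r$. There is no factorial gain because the time variable has been collapsed at each step. The paper circumvents this by working with the Picard approximations $u_n$, expanding $D_{s,y}u_{n+1}(t,x)$ explicitly as a finite sum $\sum_{k=0}^n T_k^{(n)}$ of iterated stochastic integrals (equation \eqref{finiteit}), and then estimating the $k$-fold time integral over the simplex $\{s<r_k<\cdots<r_1<t\}$ directly, which produces the $1/(k-2)!$ needed for convergence of the series defining $C_{\beta,p,t,L}$. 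The spatial part for $k\ge 3$ is handled differently from $k\le 2$: one only extracts the indicator $\mathbf 1_{\{\|x-y\|<t-s\}}$ (not a power of $G_{t-s}$), and the summability comes entirely from the time simplex.

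Finally, the statement of the theorem includes that $Du(t,x)$ is a \emph{function}; this is not automatic since $\H$ contains distributions. The paper obtains this (Step~2) from the uniform $L^p(\Omega;L^{2q}(\R_+\times\R^2))$ bounds on $Du_n(t,x)$ and weak compactness, which is another reason the argument must go through the Picard approximations rather than through the equation for $Du(t,x)$ directly.
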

   
\begin{remark}    Theorem \ref{THM} echoes the comment after \cite[Lemma 2.1]{HNVZ19} and generalizes  \cite[Lemma 2.2]{DNZ18} to the solution of a 2D stochastic wave equation.  Although the expression in \eqref{IMP} looks the same as in \cite[Lemma 2.2]{DNZ18}, \emph{i.e.} $L^p$-norm of the Malliavin derivative is bounded by the fundamental solution to the corresponding deterministic wave equation, we would like to emphasize that the proof in the 2D setting is much more involved and requires new techniques in dealing with the singularity of $G_{t-s}(x-y)$ while in the  1D case the fundamental solution is the bounded function $\frac{1}{2}\mathbf{1}_{\{ | x-y| < t-s \}}$. Modulo sophisticated integral estimates, our proof of Theorem \ref{THM} is treated   through a harmonious combination of tools from  
Gaussian analysis (Clark-Ocone formula, Burkholder inequality) and  Hardy-Littlewood-Sobolev's lemma.
\end{remark}   
     
     Before we proceed to explaining our proof strategy, let us provide a  brief  literature overview.
     
     \begin{remark} \label{rem3}  It was the paper \cite{HNV18}  by Huang, Nualart and Viitasaari that first  studied  spatial averages of stochastic heat equation with 1 spatial dimension driven by space-time white noise.  Soon later, the same authors and Zheng investigated the same equation in higher dimension; in their paper \cite{HNVZ19}, the spatial correlation is described by the Riesz kernel as in the present work. The above two references considered the noise that is white in time, leading to the natural martingale structure. This enables one to take advantage of It\^o calculus mentioned in previous remark. However,  when the noise is colored in time, these tools are not available any more and we should restrict ourselves to the linear equation (that is, when $\sigma(u)=u$). The linear equation, also known as the \emph{parabolic Anderson  model}, admits the explicit Wiener chaos expansions, and   in the work \cite{NZ19} by Nualart and Zheng, similar central limit theorems are established at qualitative level by using the so-called chaotic central limit theorem (see \emph{e.g.} \cite[Section 6.3]{NP}).  The authors of \cite{DNZ18} first considered the same problem for the stochastic wave equations 
     where spatial dimension is one and the driving Gaussian noise is white in time and fractional in space. Unlike in the heat setting, the fundamental wave solution differs in different dimensions and as we will see shortly, the analysis in our work is quite different from that in  \cite{DNZ18}. Here we also remark that it is natural to study the same problem for wave equations when the noise is colored in time, and it may be a hard problem to get a quantitative central limit theorem in this setting.  
     
     \end{remark}

Now let us   first sketch the main steps for the proof of Theorem \ref{MAIN} and then we will present the key steps in proving \eqref{IMP}.
     
  The typical proof of the functional CLT consists in three steps: 
  \medskip
  
 \noindent{($\textbf{S1}$)} We establish the limiting covariance structure, this is the content of Section \ref{sub41}. In particular, the variance of the spatial average $F_R(t)$ is of order $R^{4-\beta}$, as $R\to\infty$. As one will see shortly, the important part of this step is the proof of the limit \eqref{enough1}: $ \text{Cov}\big[ \sigma(u(s,y)), \sigma(u(s,z))  \big]\to 0$ as $\| y -z\|\to\infty$.  This limit is straightforward when $\sigma(u)=u$ and in the general case, we will apply the Clark-Ocone formula (see Lemma \ref{CO}) to first represent $\sigma(u(s,y))$ as a stochastic integral and then apply the It\^o's isometry in order to break the nonlinearity for further estimations.
  
  \medskip
  
\noindent{($\textbf{S2}$)} From $(\textbf{S1})$, we have the covariance structure of the limiting Gaussian process $\mathcal{G}$. Then we will prove the convergence of $\big\{ R^{\frac{\beta}{2} - 2}F_R(t): t\in\R_+\big\}$ to 
$\big\{\mathcal{G}_t: t\in\R_+\big\}$ in finite-dimensional distributions. This is made possible by the  following multivariate Malliavin-Stein bound that we borrow from     \cite[Proposition 2.3]{HNV18} (see also \cite[Theorem 6.1.2]{NP}).   We denote by $D$ the  Malliavin derivative and  by $\delta$  the adjoint operator of $D$ that is characterized by the  integration-by-parts formula
(\ref{IBP}). Moreover,  $\mathbb{D}^{1,2}$ is the Sobolev space of Malliavin differentiable random variables $X\in L^2(\Omega)$ with $\E\big[\| DX\|_\H^2\big] < \infty$ and ${\rm Dom}\delta$ is the domain of $\delta$; see Section \ref{sec2} for more details.
 
\begin{proposition}\label{lem612}
Let $F=( F^{(1)}, \dots, F^{(m)})$ be a random vector such that $F^{(i)} = \delta (v^{(i)})$ for $v^{(i)} \in {\rm Dom}\, \delta$ and
$  F^{(i)} \in \mathbb{D}^{1,2}$, $i = 1,\dots, m$. Let $Z$ be an $m$-dimensional centered Gaussian  vector with covariance  matrix $(C_{i,j})_{ 1\leq i,j\leq m} $. For any  $C^2$ function $h: \R^m \rightarrow \R$ with bounded second partial derivatives, we have
\begin{align}\label{MS612}
\big| \E [ h(F)] -\E [ h(Z)]  \big| \le \frac{m}{2}  \|h ''\|_\infty \sqrt{   \sum_{i,j=1}^m   \E \Big[ \big(C_{i,j} - \langle DF^{(i)}, v^{(j)} \rangle_{\HH}\big)^2 \Big] } \,,
\end{align}
where $\|h ''\|_\infty : = \sup\big\{   \big\vert \frac{\partial^2}{\partial x_i\partial x_j} h(x) \big\vert\,:\, x\in\RR^m\, , \, i,j=1, \ldots, m \big\}$. 
\end{proposition}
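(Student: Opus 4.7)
The plan is to use the standard smart-path/interpolation argument combined with two integration-by-parts identities: the Gaussian one for $Z$ and the Malliavin one for $F$. Concretely, on an enlarged probability space, assume $Z$ is independent of $F$ and define, for $h\in C^2(\R^m)$ with bounded second derivatives,
\[
\Psi(t) = \E\bigl[ h\bigl(\sqrt{1-t}\,F + \sqrt{t}\,Z\bigr)\bigr],\qquad t\in[0,1].
\]
Then $\E[h(Z)] - \E[h(F)] = \Psi(1) - \Psi(0) = \int_0^1 \Psi'(t)\,dt$, so the core of the argument is to rewrite $\Psi'(t)$ as the expectation of a quantity involving $C_{i,j} - \langle DF^{(i)}, v^{(j)}\rangle_{\H}$, and then apply Cauchy--Schwarz.

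Differentiating under the expectation gives
\[
\Psi'(t) = \tfrac{1}{2}\sum_{i=1}^{m} \E\!\left[\partial_i h\bigl(\sqrt{1-t}F + \sqrt{t}Z\bigr)\left(\frac{Z^{(i)}}{\sqrt{t}} - \frac{F^{(i)}}{\sqrt{1-t}}\right)\right].
\]
For the $Z^{(i)}$ term, I would apply classical Gaussian integration by parts (conditionally on $F$), using $\E[Z^{(i)} g(Z)] = \sum_j C_{i,j}\E[\partial_j g(Z)]$, which produces a factor of $\sqrt{t}$ that cancels the $1/\sqrt{t}$, yielding $\tfrac{1}{2}\sum_{i,j} C_{i,j}\,\E[\partial_i\partial_j h(\cdots)]$. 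For the $F^{(i)}$ term, I would use the hypothesis $F^{(i)} = \delta(v^{(i)})$ and the duality/integration-by-parts formula $\E[\delta(v^{(i)})\Phi] = \E[\langle D\Phi, v^{(i)}\rangle_{\H}]$ with $\Phi = \partial_i h(\sqrt{1-t}F + \sqrt{t}Z)$. The chain rule for $D$ (noting $DZ = 0$) gives $D\Phi = \sqrt{1-t}\sum_j \partial_i\partial_j h(\cdots)\,DF^{(j)}$, and the factor $\sqrt{1-t}$ again cancels the $1/\sqrt{1-t}$, producing $-\tfrac{1}{2}\sum_{i,j}\E[\partial_i\partial_j h(\cdots)\langle DF^{(j)}, v^{(i)}\rangle_{\H}]$.

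Adding the two contributions,
\[
\Psi'(t) = \tfrac{1}{2}\sum_{i,j=1}^{m} \E\!\left[\partial_i\partial_j h\bigl(\sqrt{1-t}F + \sqrt{t}Z\bigr)\bigl(C_{i,j} - \langle DF^{(j)}, v^{(i)}\rangle_{\H}\bigr)\right],
\]
and bounding $|\partial_i\partial_j h|\le \|h''\|_\infty$ and applying Cauchy--Schwarz inside the sum gives
\[
|\Psi'(t)| \le \tfrac{1}{2}\|h''\|_\infty \sum_{i,j=1}^{m} \E\bigl[|C_{i,j} - \langle DF^{(j)}, v^{(i)}\rangle_{\H}|\bigr] \le \tfrac{m}{2}\|h''\|_\infty \sqrt{\sum_{i,j} \E\bigl[(C_{i,j} - \langle DF^{(j)}, v^{(i)}\rangle_{\H})^2\bigr]},
\]
(after a final Cauchy--Schwarz over the double sum), and integrating over $t\in[0,1]$ yields the claim. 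The main technical obstacle is justifying the differentiation under the expectation and the two integrations by parts under the stated integrability $F^{(i)}\in\mathbb{D}^{1,2}$, $v^{(i)}\in\mathrm{Dom}\,\delta$; this is standard in the Malliavin calculus framework (one can first argue for smooth $h$ with compact support and then extend by approximation), and it is precisely why the hypotheses $F^{(i)}\in\mathbb{D}^{1,2}$ and $v^{(i)}\in\mathrm{Dom}\,\delta$ are imposed. Since the result appears already in \cite[Proposition 2.3]{HNV18} and \cite[Theorem 6.1.2]{NP}, I would simply cite those references for the full details.
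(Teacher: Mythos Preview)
Your sketch is correct and follows the standard smart-path interpolation argument that underlies the cited results. The paper does not actually prove this proposition; it simply quotes it from \cite[Proposition 2.3]{HNV18} and \cite[Theorem 6.1.2]{NP}, so your approach of outlining the argument and then deferring to those references is entirely in line with the paper.
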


 In view of \eqref{mild}, we   write $u(t,x)-1 = \delta\big( G_{t-\bullet}(x-\ast) \sigma(u(\bullet,\ast)) \big)$ so that $F_R(t)$ can be represented as 
 \begin{align}\label{sFub}
 F_R(t) = \int_{B_R} \delta\big( G_{t-\bullet}(x-\ast) \sigma(u(\bullet,\ast)) \big)dx =  \delta\big( \varphi_{t,R}(\bullet,\ast)  \sigma(u(\bullet,\ast))\big)
 \end{align}
 by  Fubini's theorem, with  
 \begin{equation} \label{varphi}
\varphi_{t,R}(r,y) = \int_{B_R} G_{t-r}(x-y) dx;
\end{equation}
see Section \ref{sub22}. Putting $V_{t,R}(s,y) =  \varphi_{t,R}(s,y)  \sigma(u(s,y)) $, and applying the fundamental estimate \eqref{IMP}, we will establish that,   for any $t_1, t_2\in(0,\infty)$,
\begin{equation} \label{eq1}
 R^{2\beta- 8} \text{Var}\big( \langle DF_R(t_1), V_{t_2,R}\rangle_\H \big) \lesssim R^{-\beta} \,\, \text{for} \,\,  R\ge t_1
 +t_2.
\end{equation}
Then,  we will show that Proposition \ref{lem612} together with the estimate (\ref{eq1}) imply
 the convergence in law of the finite-dimensional distributions. 

The  bound \eqref{eq1} for $t_1=t_2=t$ together with the following 1D Malliavin-Stein bound
(see, {\it e.g.} \cite{HNV18, Eulalia,Zhou}) will lead to the quantitative result \eqref{QCLT}.

\begin{proposition}  
Let $F=\delta (v)$ for some $\HH$-valued random variable $v\in {\rm Dom }\, \delta$. Assume  $F\in \mathbb{D}^{1,2}$ and $\E [F^2] = 1$ and  let $Z \sim \mathcal{N}(0,1)$.  Then,
\begin{equation}
d_{\rm TV}(F, Z) \leq 2 \sqrt{{\rm Var}\big[  \langle DF, v\rangle_{\HH} \big] }\,.  \label{1DNP}
\end{equation}
\end{proposition}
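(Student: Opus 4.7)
The plan is to execute the classical Malliavin--Stein recipe, which splits the argument into two largely independent ingredients: Stein's characterisation of the standard normal law and the integration-by-parts duality (\ref{IBP}) between $D$ and $\delta$.

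First, I would recall that for any Borel function $g$ with $\|g\|_\infty\le 1$ the Stein equation $f'(x)-xf(x)=g(x)-\E[g(Z)]$ admits a solution $f_g$ satisfying $\|f_g'\|_\infty\le 2$. Since $d_{\rm TV}(F,Z)=\sup\{|\E g(F)-\E g(Z)|:\|g\|_\infty\le 1\}$, the Stein representation reduces the proof to controlling $|\E[f_g'(F)-Ff_g(F)]|$ uniformly in $g$.

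Second, I would rewrite the term $\E[Ff_g(F)]$ using $F=\delta(v)$. By the duality formula (\ref{IBP}) and the Malliavin chain rule $D[f_g(F)]=f_g'(F)\,DF$,
\[
\E[Ff_g(F)]=\E[\delta(v)f_g(F)]=\E\big[\langle Df_g(F),v\rangle_\H\big]=\E\big[f_g'(F)\,\langle DF,v\rangle_\H\big],
\]
so that
\[
\big|\E[f_g'(F)-Ff_g(F)]\big|=\big|\E[f_g'(F)(1-\langle DF,v\rangle_\H)]\big|\le 2\,\E\big|1-\langle DF,v\rangle_\H\big|.
\]
To upgrade this $L^1$-bound to the claimed $L^2$-variance, I would observe that the same duality gives $\E[\langle DF,v\rangle_\H]=\E[F\cdot\delta(v)]=\E[F^2]=1$, so that $\langle DF,v\rangle_\H-1$ is centred; Cauchy--Schwarz then yields $\E|1-\langle DF,v\rangle_\H|\le\sqrt{\Var(\langle DF,v\rangle_\H)}$, which combined with the previous bound produces (\ref{1DNP}).

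The only genuine technicality is that the Stein solution $f_g$ is merely Lipschitz rather than smooth, so the Malliavin chain rule must be justified by first mollifying $g$, applying the chain rule to the resulting smooth $f_g$, and then passing to the limit using $F\in\mathbb{D}^{1,2}$ together with standard density arguments. This is a routine approximation rather than a real obstacle, which is presumably why the authors are content to cite \cite{HNV18, Eulalia, Zhou} and \cite{NP} for this by-now-classical estimate.
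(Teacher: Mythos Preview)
Your argument is the standard Malliavin--Stein proof and is essentially correct; the paper itself does not prove this proposition but merely quotes it from \cite{HNV18, Eulalia, Zhou} (and ultimately \cite{NP}), so there is nothing to compare against beyond noting that your outline matches what one finds in those references.

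One small bookkeeping slip: the identity $d_{\rm TV}(F,Z)=\sup\{|\E g(F)-\E g(Z)|:\|g\|_\infty\le 1\}$ is off by a factor of $2$ (the right-hand side equals $2\,d_{\rm TV}$), and correspondingly the Stein bound $\|f_g'\|_\infty\le 2$ holds for $g=\mathbf{1}_A$ (equivalently $0\le g\le 1$) rather than for all $\|g\|_\infty\le 1$. The cleanest route is to take the supremum over indicators $g=\mathbf{1}_A$ directly, for which $\|f_g'\|_\infty\le 2$ is the correct Stein estimate; the rest of your argument then goes through verbatim and yields exactly the constant $2$ in \eqref{1DNP}.
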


\medskip

\noindent{($\textbf{S3}$)} The last step is to show  tightness,  which follows from the tightness of the processes restricted to $[0,T]$ for any finite $T$. To show the tightness of $\big\{ R^{\frac{\beta}{2} - 2}   F_R(t): t\in[0,T] \big\}$, in view of  the well-known criterion of Kolmogorov-Chentsov (see \emph{e.g.} \cite[Corollary 16.9]{OK}), it is enough to show  that   for any $p\in[2,\infty)$,
\begin{equation} \label{KC}
\| F_R(t) - F_R(s) \|_p \lesssim R^{2-\frac{\beta}{2}} \vert t-s\vert^{1/2} ~{\rm for}~ s,t\in[0,T],
\end{equation}
where the implicit constant does not depend on $t,s$ or $R$.    This will proves  Theorem \ref{MAIN}.

\medskip

 Finally let us pave the plan of proving the fundamental estimate \eqref{IMP}. The story begins with the usual {\it Picard iteration}: We define $u_0(t,x)=1$ and for $n\geq 0$,
 \begin{equation} \label{Pi}
 u_{n+1}(t,x) = 1 + \int_0^t \int_{\R^2} G_{t-s}(x-y) \sigma\big( u_n(s,y) \big) W(ds, dy).   
 \end{equation}
 It is a classic result that $u_n(t,x)$ converges in $L^p(\Omega)$ to $u(t,x)$ uniformly in $x\in\R^2$ for any $p\ge 2$; see \emph{e.g.} \cite[Theorem 4.3]{Dalang}.  
Now it has become clear  that if we assume $\sigma(1)=0$, we will end up in the trivial case where $u(t,x)\equiv 1$, in view of the above iteration. 

For each $n\geq 0$, $u_{n+1}(t,x)$ is  Malliavin differentiable, as one can show by induction on $n$. Our strategy is to first obtain the uniform estimate of  $\sup\big\{ \| D_{s,y} u_n(t,x) \|_p: n\geq 0\big\}$ and then one can hope to transfer this estimate to $\| D_{s,y}u(t,x)\|_p$.   As mentioned before, $Du(t,x)$ lives in the space $\H$ that contains generalized functions. To overcome this, we will carefully apply the following  inequality of Hardy-Littlewood-Sobolev to show $Du(t,x)$ is a random variable in $L^{\frac{4}{4-\beta}}(\R_+\times\R^2)$, with $\beta\in(0,2)$ fixed throughout this paper.

     \begin{lemma}[\textbf{Hardy-Littlewood-Sobolev}]  \label{HLS} If $1 < p < p_0 < \infty$ with $p_0^{-1} = p^{-1} - \alpha n^{-1}$, then there is    some constant $C$ that only depends on $p$, $\alpha$ and $n$, such that  
  \[
   \|   I^\alpha g  \|_{L^{p_0}(\R^n)} \leq C  \| g \| _{L^p(\R^n) },
   \]
   for any locally integrable function $g:\R^2\to\R$, where with $\alpha\in(0,n)$, 
    \[
  \big( I^\alpha g \big)(x) := \int_{\R^n} \| x - y\|^{\alpha - n} g(y) dy.
   \]
    \end{lemma}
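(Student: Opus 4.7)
The plan is to prove Lemma \ref{HLS} via Hedberg's classical argument, which reduces the $L^{p_0}$-boundedness of $I^\alpha$ to the $L^p$-boundedness of the Hardy--Littlewood maximal operator
\[
(Mg)(x) := \sup_{r>0} \frac{1}{|B_r(x)|} \int_{B_r(x)} |g(y)|\, dy,
\]
which is the Hardy--Littlewood maximal theorem: $\|Mg\|_{L^p(\R^n)} \lesssim \|g\|_{L^p(\R^n)}$ for every $p \in (1,\infty)$. I would take this last fact as a black box.

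The first step is a pointwise estimate. Fix $x \in \R^n$ and, for a parameter $R > 0$ to be optimized, split
\[
(I^\alpha |g|)(x) = \int_{\|y\| \leq R} \|y\|^{\alpha-n} |g(x-y)|\, dy \;+\; \int_{\|y\| > R} \|y\|^{\alpha-n} |g(x-y)|\, dy =: A_R(x) + B_R(x).
\]
For $A_R(x)$, I would decompose the ball dyadically into annuli $\{2^{-k-1}R < \|y\| \leq 2^{-k}R\}$ and bound the integral over each annulus by a constant multiple of $(2^{-k}R)^\alpha (Mg)(x)$; summing the geometric series produces $A_R(x) \lesssim R^\alpha (Mg)(x)$. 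For $B_R(x)$, Hölder's inequality with exponents $p$ and $p' = p/(p-1)$ gives
\[
B_R(x) \leq \|g\|_{L^p} \left( \int_{\|y\| > R} \|y\|^{(\alpha-n)p'}\, dy \right)^{1/p'},
\]
and the tail integral converges exactly because $\alpha < n/p$ (i.e., $p_0 < \infty$); a computation in polar coordinates yields $B_R(x) \lesssim R^{\alpha - n/p} \|g\|_{L^p}$.

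Adding these bounds and optimizing in $R$ by balancing the two sides (the choice $R = (\|g\|_{L^p}/(Mg)(x))^{p/n}$ when $(Mg)(x) > 0$; the case $(Mg)(x) = 0$ forces $g \equiv 0$ a.e.\ and is trivial), and using the identity $\alpha p/n = 1 - p/p_0$, I would arrive at Hedberg's pointwise inequality
\[
(I^\alpha |g|)(x) \lesssim \|g\|_{L^p}^{1 - p/p_0}\, \big((Mg)(x)\big)^{p/p_0}.
\]
Raising to the $p_0$-th power and integrating gives
\[
\|I^\alpha g\|_{L^{p_0}}^{p_0} \lesssim \|g\|_{L^p}^{p_0 - p}\, \|Mg\|_{L^p}^p \lesssim \|g\|_{L^p}^{p_0 - p}\, \|g\|_{L^p}^p = \|g\|_{L^p}^{p_0},
\]
by the maximal theorem, and taking $p_0$-th roots finishes the proof.

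The main obstacle, and the only non-elementary input, is the $L^p$-boundedness of $M$, which fails at $p=1$ and forces the hypothesis $p > 1$; the complementary restriction $p_0 < \infty$ (equivalently $\alpha p < n$) is what makes the tail integral in $B_R$ finite. Both endpoint exclusions are therefore structural rather than artifacts of the method, and no feature of $\R^n$ beyond the scaling of Lebesgue measure enters the argument.
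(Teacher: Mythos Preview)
Your proof via Hedberg's inequality is correct and complete. The paper, however, does not prove Lemma~\ref{HLS} at all: it states the result and defers to Stein's book \cite[pages 119--120]{Stein70}, treating it as a classical fact to be invoked rather than established. So there is nothing to compare against on the paper's side.

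For what it is worth, the argument you supply is one of the cleanest routes to Hardy--Littlewood--Sobolev. Stein's original treatment proceeds instead through a weak-type $(1,q)$ bound for $I^\alpha$ (with $q = n/(n-\alpha)$) combined with Marcinkiewicz interpolation, which is slightly more machinery but yields the same conclusion. Your approach has the virtue of producing a \emph{pointwise} inequality relating $I^\alpha g$ to $Mg$, which is sometimes useful in its own right; the interpolation route gives no such refinement but avoids the optimization step. Either way, the only deep input is the boundedness of the maximal operator, and you identify this correctly.
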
 
   
 For our purpose, with $n=2$, $\alpha= 2-\beta$, $p= 2q=4/(4-\beta)$ and $p_0 = 4/\beta$, we deduce from    H\"older's inequality that 
   \begin{align}
 \langle f, g\rangle_{\mathfrak{H}_0} &:=  \int_{\R^2} f(x) g(y) \| x- y\|^{-\beta} dxdy   \label{H_0}  \\
 & \le  \| f\|_{L^{2q}(\R^2)} \| I^{2-\beta}g \| _{L^{4/\beta}(\R^2)} \notag \\
 &  \leq C_\beta \| f \| _{L^{2q}(\R^2)} \| g \| _{L^{2q}(\R^2)}, \label{Sobolev}
   \end{align}
   for any $f,g\in L^{2q}(\R^2)$; see {\it e.g.} \cite[pages 119-120]{Stein70}.
  
  \medskip
     
Once we obtain the uniform estimate of  $\sup\big\{ \| D_{s,y} u_n(t,x) \|_p: n\geq 0\big\}$ and prove $Du(t,x)\in L^{\frac{4}{4-\beta}}(\R_+\times\R^2)$, that is, $(s,y)\longmapsto D_{s,y}u(t,x)$ is indeed a random function,
we proceed to the proof of  \eqref{IMP}.
In view of the Clark-Ocone formula (see Lemma \ref{CO}), we have $\E[ D_{s,y} u_{t,x} \vert \F_s \big] = G_{t-s}(x-y) \sigma(u(s,y) )$ almost surely, where $\{\F_s: s\in\R_+\}$ is the filtration generated by the noise; see Section \ref{sub22}. Then, the lower bound  in \eqref{IMP} follows immediately from  the conditional Jensen inequality.  The upper bound follows from the uniform estimates of  $\| D_{s,y} u_n(t,x) \|_p$
by a standard argument.      

\medskip

  Before we end this introduction, let us point out another technical difficulty in this paper. After the application of Lemma \ref{HLS} during the process of estimating  $\| D_{s,y} u_n(t,x) \|_p$, we will encounter  integrals of the form
  \begin{align}\label{intform}
\int_s^t  \left(\int_{\R^2} G^{2q}_{t-r}(x-z) G_{r-s}^{2q}(z)dz\right)^{\delta}dr
 \end{align}
where $q\in (1/2,1)$ and  $\delta \in \{1, 1/q\}$. 
 In the case of stochastic heat equation, the estimation of the above integrals  is  straightforward due to the semi-group property.   However, for the wave equation the kernel $G_t$ {\it does not } satisfy the semi-group property and the estimation of the above integrals is  quite involved.
  For  the case of the 1D stochastic wave equation,  as one can see from the paper  \cite{DNZ18}, the computations take advantage of the simple  form of the fundamental solution (\emph{i.e.} $\frac12\1_{\{  |x-y| <t-s \}}$). For our 2D case, the singularity within the fundamental solution $G_{t-s}(x-y)$ puts the technicality to another level and we have to estimate the convolution $ G_{t-r}^{2q} \ast G_{r-s}^{2q}$ by exact computations. A basic technical tool  used in this problem is the following lemma.

      \begin{lemma}\label{LEM1}  For  $0\leq s<t<\infty$, with $\| z\|= \w >0$ and $q\in(1/2,1)$, we have
\begin{align}
G_t^{2q}\ast G_s^{2q}(z) &\lesssim   {\bf 1}_{\{ \w < s\}}  \big[ t^2 - (s-\w)^2\big]^{1-2q} +       \big[ t^2 - (s+\w)^2 \big]^{1-2q } {\bf 1}_{\{ t> s+\w  \}} \notag \\
 &\quad +  {\bf 1}_{\{\vert s-\w \vert < t < s+\w  \}} \big[ (\w+s)^2 -t^2\big]^{-q+\frac12} \big[t^2- (s-\w)^2\big]^{-q+\frac12},\label{qq}
 \end{align}
where the implicit constant only depends on $q$.
\end{lemma}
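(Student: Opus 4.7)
My plan is to use rotational invariance to reduce the two-dimensional convolution to a one-dimensional radial integral whose angular part can be evaluated explicitly, then bound that integral via case analysis on $(\w, s, t)$. By rotational symmetry I set $z = (\w, 0)$. With $y = (r\cos\theta, r\sin\theta)$ and the substitution $\tau = t^2 - \|z-y\|^2 = t^2 - \w^2 - r^2 + 2\w r\cos\theta$ for the angular variable, the identity $4\w^2 r^2 \sin^2\theta = (a(r) - \tau)(\tau - b(r))$, where $a(r) := t^2 - (\w - r)^2$ and $b(r) := t^2 - (\w + r)^2$, turns the convolution into
\[
G_t^{2q} \ast G_s^{2q}(z) = 2(2\pi)^{-4q}\int_0^s r\,(s^2 - r^2)^{-q}\,I(r)\,dr, \qquad I(r) := \int_{\max(0, b(r))}^{a(r)} \frac{\tau^{-q}\,d\tau}{\sqrt{(a(r) - \tau)(\tau - b(r))}},
\]
where $I(r) = 0$ when $a(r) \leq 0$. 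I would then establish the sharpened inner estimate $I(r) \leq C_q\, b(r)^{1/2-q}\,a(r)^{-1/2}$ when $b(r) \geq 0$ (by the substitution $\tau = b + (a - b)\sin^2\phi$ and a scaling argument in $b/a$), and $I(r) \leq C_q\, a(r)^{1/2-q}(-b(r))^{-1/2}$ when $b(r) < 0$ (by using $\tau - b \geq -b$ on $\tau \geq 0$ and reducing to a Beta integral).

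The outer estimate then splits on $(\w, s, t)$, using $s < t$. When $\w \geq s + t$ the integrand is identically zero. When $\w < t - s$ (inclusion $D_1 \subset D_2$), the bound $b(r) \geq 0$ holds on $(0, s)$ and the substitution $r = s - y$, together with $\int_0^\infty y^{-q}(\delta + y)^{-q}\,dy \lesssim \delta^{1-2q}$ for $\delta := t - \w - s$, yields the second-term bound $[t^2 - (\w+s)^2]^{1-2q}$. In the partial-overlap regime $|s-\w| < t < s+\w$, I would split the outer integration at $r = (t-\w)_+$; the $r \leq t - \w$ piece mimics the inclusion case and, when additionally $\w < s$ (which ensures $t^2 - (s-\w)^2 > 0$), produces the first-term bound $[t^2 - (s-\w)^2]^{1-2q}$.

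The main obstacle is the $r > t - \w$ sub-integral in the partial-overlap case, in which three singular behaviors interact: $(s^2 - r^2)^{-q} \sim (s - r)^{-q}$ at $r = s$; the factor $((\w+r)^2 - t^2)^{-1/2}$, which blows up at $r = t - \w$ and attains its minimum $\Delta_2^{-1/2}$ at $r = s$ (with $\Delta_2 := (\w+s)^2 - t^2$); and $(t^2 - (\w-r)^2)^{1/2-q}$, controlled by $\Delta_1 := t^2 - (s-\w)^2$. Here the sharpened inner bound is essential: the naive $\pi b(r)^{-q}$ is too coarse when $b(r)$ is small compared to $a(r)$. Changing to $y = s - r$ and $\lambda := s + \w - t$, and using the factorization $(\w + r)^2 - t^2 = (\lambda - y)(2t + \lambda - y) \geq 2t(\lambda - y)$ to isolate the right endpoint as $(\lambda - y)^{-1/2}$, I would split the $y$-integral at the crossover scale $y_\ast \sim \Delta_2/(\w + s)$ (below which $(\w + r)^2 - t^2 \sim \Delta_2$ and above which it scales linearly in $\lambda - y$) and estimate each piece carefully to obtain the third-term bound $\Delta_1^{1/2 - q}\,\Delta_2^{1/2 - q}$. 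Summing the three contributions then yields the lemma.
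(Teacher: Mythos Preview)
Your route is plausible but differs from the paper's, and the partial-overlap bookkeeping is more delicate than your sketch suggests. You put the $G_s$-factor in polar coordinates and integrate the angle first via $\tau=t^2-\|z-y\|^2$; the paper instead passes to the pair of squared distances $(x,y)=(\xi^2+\eta^2,(\w-\xi)^2+\eta^2)$, obtaining
\[
2\mathbf I=\int (t^2-x)^{-q}(s^2-y)^{-q}\big[(\sqrt x+\w)^2-y\big]^{-1/2}\big[y-(\sqrt x-\w)^2\big]^{-1/2}\,dxdy,
\]
and integrates first in $y$. Because the larger parameter $t$ then sits in the \emph{outer} integral, the partial-overlap case collapses to a single Beta integral: bounding the $y$-integral by $[(\sqrt x+\w)^2-s^2]^{-1/2}[s^2-(\sqrt x-\w)^2]^{1/2-q}$ and using $(\sqrt x+\w)^2-s^2\ge x-(s-\w)^2$ gives $\int_{(s-\w)^2}^{t^2}(t^2-x)^{-q}[x-(s-\w)^2]^{-1/2}\,dx={\rm Beta}(\tfrac12,1-q)\,\Delta_1^{1/2-q}$, with the prefactor supplying $\Delta_2^{1/2-q}$. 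The first term of the lemma comes separately from the region $\sqrt x+\w<s$ (the integration point deep inside the $s$-disk), via $\int_0^{(s-\w)^2}(t^2-x)^{-q}[(s-\w)^2-x]^{-q}\,dx\lesssim[t^2-(s-\w)^2]^{1-2q}$.

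Your decomposition does not isolate that last region: in your coordinates the $s$-disk sits at the origin and the $t$-disk at $z$, so the paper's ``$\mathbf I_{\bf B}$''-region corresponds to $|z-y|$ small, which is \emph{not} a radial slice in $r=|y|$. In particular your claim that the $r\le t-\w$ piece ``produces the first-term bound $[t^2-(s-\w)^2]^{1-2q}$'' is off: that piece is governed by $\lambda=s+\w-t$ (through $b(r)\to 0$ and $s-r\to\lambda$ simultaneously), and the integral you wrote down, $\int y^{-q}(\delta+y)^{-q}dy$, corresponds to the naive inner bound $\pi b^{-q}$ rather than the sharpened one you announced. Similarly, in the $r>t-\w$ piece the factor $a(r)^{1/2-q}$ is not uniformly $\lesssim\Delta_1^{1/2-q}$ over the whole range (for instance when $t$ is close to $\w$), so the crossover argument has to track $a(r)$ as well as $-b(r)$. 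None of this is fatal---your inner estimates are correct and the scheme can be pushed through---but the paper's symmetric change of variables sidesteps the whole difficulty and yields each of the three terms from a one-line Beta computation.
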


The rest of this article is organized as follows:  Section 2 collects some preliminary facts for our proofs, Section 3  contains the proof of  Theorem \ref{MAIN} and Section 4 is devoted to proving the fundamental estimate \eqref{IMP}.
 
 \medskip 
 \paragraph{\bf Acknowledgement:} We are grateful to two referees for their critical comments that improved our work.
    
    \section{Preliminaries}  \label{sec2}
    
  This section provides  some preliminary results that are required for further sections. It consists of two subsections:  Section \ref{sub21} contains several important facts on the function $G_{t-s}(x-y)$ and Section \ref{sub22} is devoted to a minimal set of  results from stochastic analysis, notably the tools from Malliavin calculus.
    
    \subsection{Basic facts on the fundamental solution}\label{sub21}

Let us fix  some  more notation here. 
\medskip

\noindent{\bf Notation.} For $p\in\R$, we write $(v)_+^p =v^p$ if $v>0$ and $(v)_+^p = 0$ if $v\leq 0$. Then, we can write
\[
G_t(x) = \frac{1}{2\pi}       (t^2 - \|x\|^2)_+^{-1/2}.
\]
Recall the function  $\varphi_{t,R}(r,y)$ introduced in \eqref{varphi}:  
 \[
 \varphi_{t,R}(s,y)= \int_{B_R} G_{t-r}(x-y)dx.
 \] 
In what follows, we put together several useful facts on the function $G_{t}(z)$.

\begin{lemma}\label{LFact}   {\rm (1)}
 For  any $p\in(0,1)$ and $t>0$.
  \begin{align}
   \int_{\R^2} G^{2p}_{t}(z)  dz= \frac{  (2\pi)^{1-2p} }{2-2p} t^{2-2p } .  \label{fact01}
  \end{align} 

 {\rm (2)} For $t>s$, 
  we have $ \varphi_{t,R}(s,y)\leq (t-s)\1_{\{ \|y\|\leq R+t\}}$ and ${\displaystyle \int_{\R^2}  \varphi_{t,R}(s,y) dy   =(t-s) \pi R^2}$.
  \end{lemma}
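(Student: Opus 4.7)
\textbf{Proof proposal for Lemma \ref{LFact}.} The two statements are entirely computational and amount to direct integration of an explicit kernel, so I would dispatch them in order with no conceptual machinery.

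For part (1), I would recall $G_t(z) = \frac{1}{2\pi}(t^2-\|z\|^2)_+^{-1/2}$, so that $G_t^{2p}(z) = (2\pi)^{-2p}(t^2-\|z\|^2)_+^{-p}$ is rotationally symmetric and supported on the disk of radius $t$. Passing to polar coordinates gives
\[
\int_{\R^2} G_t^{2p}(z)\,dz = (2\pi)^{1-2p}\int_0^t r(t^2-r^2)^{-p}\,dr,
\]
and the substitution $u = t^2 - r^2$ turns the right-hand side into $(2\pi)^{1-2p}\cdot \frac{1}{2}\int_0^{t^2} u^{-p}\,du$, which for $p\in(0,1)$ evaluates to the claimed $\frac{(2\pi)^{1-2p}}{2-2p}\,t^{2-2p}$.

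For part (2), the support of $G_{t-s}(x-\cdot)$ is contained in the ball of radius $t-s$ around $x$. Since the integration in the definition of $\varphi_{t,R}(s,y)$ runs over $x\in B_R$, the integrand vanishes whenever $\|y\|> R+(t-s)$, and in particular whenever $\|y\|> R+t$, giving the indicator factor $\1_{\{\|y\|\le R+t\}}$. For the pointwise bound, I would enlarge the $x$-integration from $B_R$ to $\R^2$, translate by $y$, and apply part (1) with $p=1/2$ to obtain
\[
\varphi_{t,R}(s,y)\le \int_{\R^2} G_{t-s}(z)\,dz = (t-s).
\]

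For the integral identity, I would apply Fubini to swap the order of integration and then use the same $p=1/2$ specialization of part (1) on the inner integral:
\[
\int_{\R^2}\varphi_{t,R}(s,y)\,dy = \int_{B_R}\Big(\int_{\R^2} G_{t-s}(x-y)\,dy\Big)dx = (t-s)\cdot|B_R| = (t-s)\pi R^2.
\]
There is no genuine obstacle here; the only minor care-point is that one needs $p\in(0,1)$ (strict inequalities) so that the radial integral in (1) converges at the endpoint $r=t$ and the antiderivative $\frac{u^{1-p}}{1-p}$ is finite, which is exactly the hypothesis given.
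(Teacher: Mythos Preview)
Your proposal is correct and matches the paper's own treatment: the paper omits the proof of this lemma, remarking only that ``it follows from simple and exact computations,'' and your polar-coordinate evaluation of \eqref{fact01} together with the support/Fubini argument for part (2) is precisely that computation. There is nothing to add.
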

The proof of Lemma \ref{LFact}  is omitted, as it follows from simple and exact computations.
As a consequence of  Lemma \ref{LFact}-(2), we have
 \begin{align}
  \int_{\R^2} \varphi_{t,R}(s, z+\xi) \varphi_{t,R}(s, z) dz \leq
   \pi (t-s)^2 R^2. \label{fact00}
  \end{align}
The following lemma is also a consequence of Lemma \ref{LFact}.

  \begin{lemma}\label{lemfact} For $t_1,t_2\in(0,\infty)$, we put
  \[
  \Psi_R(t_1, t_2; s) : =R^{\beta-4}  \int_{\R^4} \varphi_{t_1,R}(s,y) \varphi_{t_2,R}(s,z) \| y - z\|^{-\beta}    dydz.
  \]
  Then 
  \begin{itemize}
  \item[(i)] $\Psi_R(t_1, t_2; s)$ is uniformly bounded over $s\in[0, t_2\wedge t_1]$ and $R>0$; 
  \item[(ii)] For any $s\in[0, t_2\wedge t_1]$, 
  $
   \Psi_R(t_1, t_2; s) ~\text{converges to}~  4\pi^2 c_\beta \kappa_\beta (t_1-s)(t_2-s),
  $
  as $R\to\infty$.
  \end{itemize}
Here the quantities $c_\beta$ and $\kappa_\beta$ are given   in \eqref{CKbeta}.
\end{lemma}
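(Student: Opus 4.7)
The plan is to express $\Psi_R(t_1,t_2;s)$ as a frequency-space integral via the Parseval-type identity \eqref{defH2} (equivalently \eqref{H_0}), and then extract both parts (i) and (ii) from a single dominated-convergence argument after a rescaling.

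First, I would observe that $\varphi_{t,R}(s,\cdot) = \mathbf{1}_{B_R} \ast G_{t-s}$ on $\R^2$ and record the two standard Fourier transforms
\[
\mathcal{F}\mathbf{1}_{B_R}(\xi) = \frac{2\pi R\, J_1(R\|\xi\|)}{\|\xi\|}, \qquad \mathcal{F}G_{t-s}(\xi) = \frac{\sin((t-s)\|\xi\|)}{\|\xi\|},
\]
the first being the classical disk formula in dimension two and the second the Fourier transform of the 2D wave kernel. Multiplying them gives
\[
\mathcal{F}\varphi_{t,R}(s,\cdot)(\xi) = \frac{2\pi R\, J_1(R\|\xi\|)\sin((t-s)\|\xi\|)}{\|\xi\|^{2}}.
\]
Because each $\varphi_{t_i,R}(s,\cdot)$ is real and radial, inserting these expressions into the spatial pairing \eqref{H_0} through \eqref{defH2} and multiplying by $R^{\beta-4}$ yields
\[
\Psi_R(t_1,t_2;s) = 4\pi^{2} c_\beta\, R^{\beta-2} \int_{\R^{2}} J_1(R\|\xi\|)^{2}\, \sin((t_1-s)\|\xi\|)\, \sin((t_2-s)\|\xi\|)\, \|\xi\|^{\beta-6}\, d\xi.
\]

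Next, I would perform the change of variable $\eta = R\xi$, which absorbs the $R$-dependence into the sine factors and produces
\[
\Psi_R(t_1,t_2;s) = 4\pi^{2} c_\beta\, R^{2} \int_{\R^{2}} J_1(\|\eta\|)^{2}\, \sin((t_1-s)\|\eta\|/R)\, \sin((t_2-s)\|\eta\|/R)\, \|\eta\|^{\beta-6}\, d\eta.
\]
The elementary inequality $|\sin x|\le |x|$ then bounds $R^{2}$ times the product of sines by $(t_1-s)(t_2-s)\|\eta\|^{2}$ uniformly in $R>0$, so the whole integrand is dominated by the $R$-independent, integrable function $(t_1-s)(t_2-s)\, J_1(\|\eta\|)^{2}\, \|\eta\|^{\beta-4}$, whose integral equals $(t_1-s)(t_2-s)\,\kappa_\beta$ by the very definition \eqref{CKbeta} of $\kappa_\beta$. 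This immediately yields (i) with the explicit constant $\Psi_R(t_1,t_2;s) \le 4\pi^{2} c_\beta \kappa_\beta\, (t_1-s)(t_2-s)$, uniformly in $R>0$ and $s\in[0,t_1\wedge t_2]$.

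For (ii), the same integrand converges pointwise as $R\to\infty$ to $(t_1-s)(t_2-s)\, J_1(\|\eta\|)^{2}\, \|\eta\|^{\beta-4}$, using $R\sin(a/R)\to a$, so dominated convergence delivers $\Psi_R(t_1,t_2;s) \to 4\pi^{2} c_\beta \kappa_\beta\, (t_1-s)(t_2-s)$, as claimed. I do not foresee any serious obstacle here: the only nontrivial ingredient is recognizing the Bessel formula for $\mathcal{F}\mathbf{1}_{B_R}$ (which is what brings the integrand $J_1(\|\eta\|)^{2}\|\eta\|^{\beta-4}$ needed to match the definition of $\kappa_\beta$), after which everything reduces to the single estimate $|\sin x|\le |x|$ and the dominated convergence theorem.
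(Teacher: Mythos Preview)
Your proposal is correct and follows essentially the same route as the paper: pass to Fourier variables via \eqref{defH2}, identify the disk transform $2\pi\|\xi\|^{-1}J_1(\|\xi\|)$ and the wave-kernel transform $\sin((t-s)\|\xi\|)/\|\xi\|$, rescale $\xi\to R\xi$, and use the uniform bound $|\sin x|\le |x|$ together with $\kappa_\beta<\infty$ to run dominated convergence. The only cosmetic difference is that the paper first writes $\varphi_{t,R}$ as $\int_{B_R}G_{t-s}(x-\cdot)\,dx$ and rescales the $x,x'$ integrals before taking the Fourier transform of $\mathbf{1}_{B_1}$, whereas you take the Fourier transform of the convolution $\mathbf{1}_{B_R}\ast G_{t-s}$ directly and rescale in frequency; the resulting integrals are identical.
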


  \begin{proof}   By using Fourier transform as in (\ref{defH2}), we can write
 \begin{align*}  
 &\quad \Psi_R(t_1, t_2; s) = R^{\beta-4} \int_{B_R^2} dxdx'  \int_{\R^4} G_{t_1-s}(x-y) G_{t_2-s}(x'-z) \| y - z\|^{-\beta}    dydz  \notag \\
&= c_{\beta}  R^{\beta-4} \int_{B_R^2} dxdx'  \int_{\R^2}d\xi e^{- i (x-x')\cdot \xi} \left( \frac{ \sin( (t_1-s) \| \xi \|  )  }{\| \xi \|}  \frac{ \sin( (t_2-s) \| \xi \|  )  }{\| \xi \|} \right) \| \xi \|^{\beta -2}   \\
&= c_\beta  \int_{B_1^2} dxdx'  \int_{\R^2}d\xi e^{- i (x-x')\cdot \xi} ~ \frac{ \sin( (t_1-s) \| \xi \| R^{-1}  )  }{\| \xi \| R^{-1}}  \frac{ \sin( (t_2-s) \| \xi \| R^{-1}  )  }{\| \xi \| R^{-1}}  \| \xi \|^{\beta -2} ,
 \end{align*} 
where in the last equality we made the   change of variables  $\xi \to \xi R^{-1}$.

The Fourier transform of $x\in\R^2\longmapsto \1_{\{ \| x\| \leq1 \}}$ is $\xi\in\R^2\longmapsto 2\pi \|\xi\|^{-1} J_1(\| \xi\|)$
(see, for instance,  Lemma 2.1 in \cite{NZ19}), where $J_1$ is the Bessel function of first kind with order $1$ introduced in (\ref{J1}).
 Then, we can rewrite $ \Psi_R(t_1, t_2; s)$ as 
 \[
  c_\beta   \int_{\R^2} \Big[2\pi \| \xi\|^{-1} J_1(\| \xi \|) \Big]^2 \left(  \frac{ \sin( (t_1-s) \| \xi \| R^{-1}  )  }{\| \xi \| R^{-1}}  \frac{ \sin( (t_2-s) \| \xi \| R^{-1}  )  }{\| \xi \| R^{-1}} \right) \| \xi \|^{\beta -2}d\xi.
 \]
  Since $ \sin( (t-s) \| \xi \| R^{-1} ) / (\| \xi \| R^{-1}) $ is uniformly bounded over $s\in(0,t]$ and converges to $t-s$ as $R\to\infty$, then the statement (i) holds true and
 \[
  \Psi_R(t_1, t_2; s)\xrightarrow{R\to\infty}  4\pi^2 c_\beta \kappa_\beta (t_1-s)(t_2-s).
    \]
 by the dominated convergence  theorem with the dominance condition $\kappa_\beta < \infty$.    \end{proof}
 
 \begin{remark} \label{CKREM}
  By inverting the Fourier transform, we have
  \[
 ( 2\pi)^2 c_\beta \kappa_\beta = c_\beta \int_{\R^2} (2\pi)^2 J_1(\|\xi\|)^2 \| \xi\|^{-2} \| \xi\|^{\beta-2}d\xi =\int_{B_1^2} \|y-z\|^{-\beta} dydz. 
  \]
 \end{remark}

     \subsection{Basic stochastic analysis} \label{sub22}

Let  $\H$ be defined  (see \eqref{defH} and \eqref{defH2})  as the completion of $C_c^\infty(\R_+\times\R^2)$ under the inner product 
\[
\langle f, g\rangle_\H = \int_{\R_+\times\R^4} f(s,y) g(s, z) \| y-z\|^{-\beta}dydzds ~ \text{for}~ f, g\in C^\infty_c(\R_+\times\R^2).
\]
Consider  an isonormal Gaussian process associated to the Hilbert space $\H$, denoted by $W=\big\{ W(\phi): \phi\in \H \big\}$. That is, $W$ is a centered  Gaussian family of random variables such that
$
\E\big[ W(\phi) W(\psi) \big] = \langle \phi, \psi\rangle_\H$ for any   $\phi, \psi\in \H$.
As the noise is white in time, a martingale structure naturally appears. First we define $\F_t$ to be the $\sigma$-algebra generated by $\P$-null sets and $\big\{ W(\phi): \phi\in C^\infty(\R_+\times\R^2)$ has compact support contained in $[0,t]\times\R^2 \big\}$, so we have a filtration $\mathbb{F}=\{\F_t: t\in\R_+\}$. If $\big\{\Phi(s,y): (s,y)\in\R_+\times\R^2\big\}$ is  an $\mathbb{F}$-adapted  random field such that $\E\big[ \| \Phi\|_\H^2 \big] <+\infty$, then 
\[
M_t = \int_{[0,t]\times\R^2} \Phi(s,y)W(ds,dy),
\]
interpreted as the Dalang-Walsh integral (\cite{Dalang99,Walsh}), is a square-integrable $\mathbb{F}$-martingale with quadratic variation given by
\[
\langle M \rangle_t = \int_{[0,t]\times\R^4} \Phi(s,y) \Phi(s,z) \|y-z\|^{-\beta} dydzds =\big\| \Phi(\bullet,\ast)\1_{\{ \bullet \leq t \}}\big\|_\H^2.
\]
Let us record a suitable version of Burkholder-Davis-Gundy inequality (BDG for short); see  {\it e.g.} \cite[Theorem B.1]{Khoshnevisan}.

\begin{lemma}[BDG]  If $\big\{\Phi(s,y): (s,y)\in\R_+\times\R^2\big\}$ is an adapted random field with respect to $\mathbb{F}$ such that $ \| \Phi\|_\H \in L^p(\Omega)$ for some $p\geq 2$, then
\begin{equation} \label{BDG}
\left\|  \int_{[0,t]\times\R^2} \Phi(s,y)W(ds,dy) \right\|_p^2 \leq 4p \left\| \int_{[0,t]\times\R^4} \Phi(s,z)\Phi(s,y) \| y-z\|^{-\beta} dydzds \right\|_{p/2}.
\end{equation}
\end{lemma}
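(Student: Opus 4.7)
The plan is to deduce \eqref{BDG} as a specialization of the classical Burkholder-Davis-Gundy inequality for continuous martingales, using the fact that the Dalang-Walsh integral against a temporally-white noise produces a continuous $L^2$-martingale with an explicit predictable quadratic variation.

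First, I would set
$$M_t := \int_{[0,t]\times\R^2}\Phi(s,y)\,W(ds,dy)$$
and identify $(M_t)_{t\ge 0}$ as a continuous $\mathbb{F}$-martingale with bracket
$$\langle M\rangle_t = \int_0^t\!\!\int_{\R^4}\Phi(s,y)\Phi(s,z)\|y-z\|^{-\beta}dydzds = \big\|\Phi\1_{\{\bullet\le t\}}\big\|_\H^2.$$
For elementary integrands of the form $\Phi(s,y)=\sum_i\xi_i(\omega)\1_{(a_i,b_i]\times A_i}(s,y)$ with $\xi_i$ bounded and $\F_{a_i}$-measurable, this is immediate from the construction of the stochastic integral. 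One then extends to general adapted $\Phi$ with $\E[\|\Phi\|_\H^2]<\infty$ by density in $L^2(\Omega;\H)$ using the defining isometry, and to the case $\|\Phi\|_\H\in L^p(\Omega)$ with $p\ge 2$ by localizing along $\tau_n:=\inf\{t:\|\Phi\1_{[0,t]}\|_\H\ge n\}$ and applying Fatou's lemma in the final bound.

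Second, I would invoke the classical continuous-martingale BDG inequality
$$\|M_t\|_p \le C_p\big\|\langle M\rangle_t^{1/2}\big\|_p,\qquad p\ge 2,$$
with constant $C_p=2\sqrt{p}$ (Carlen-Kree, or equivalently Barlow-Yor). Squaring both sides and using the elementary identity $\|X^{1/2}\|_p^2=\|X\|_{p/2}$ for $X\ge 0$ then produces the factor $4p$ that appears in \eqref{BDG}.

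The main obstacle is pinning down the linear $p$-dependence of $C_p^2$: the bound $C_p^2 = O(p)$ is what makes \eqref{BDG} strong enough to close the moment estimates in the Picard iteration of Section 4, where the inequality is composed many times and a constant of the form $C_p^2 = O(p^2)$ would blow up after iteration. The standard good-$\lambda$ proof of Burkholder's inequality only gives $C_p=O(p)$, i.e.\ $C_p^2=O(p^2)$; obtaining the sharp $C_p=O(\sqrt{p})$ requires a finer argument, either the Itô-formula computation of Carlen-Kree applied to $(\langle M\rangle_t+\varepsilon)^{p/2}$, or the discrete Burkholder inequality with its sharp constant combined with a time-discretization of $(M_t)$ and passage to the limit.
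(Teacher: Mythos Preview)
Your proposal is correct. The paper does not actually prove this lemma; it simply records it with a reference to \cite[Theorem~B.1]{Khoshnevisan}, so there is no ``paper's own proof'' to compare against. What you outline---identify $M_t$ as a continuous $\mathbb{F}$-martingale with bracket $\langle M\rangle_t=\|\Phi\1_{[0,t]}\|_\H^2$, then apply the sharp continuous-martingale BDG bound $\|M_t\|_p\le 2\sqrt{p}\,\|\langle M\rangle_t^{1/2}\|_p$ and square---is exactly the argument behind that reference, and it reproduces the stated constant $4p$.

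One small overstatement: you write that a constant $C_p^2=O(p^2)$ ``would blow up after iteration'' in Section~4. In fact the iterations there (both the Gronwall-type bound in \S4.1 and the series \eqref{Cbeta} defining $C_{\beta,p,t,L}$) would still converge with $O(p^2)$, since the factorial in the denominator dominates any fixed power of $p$; the resulting constants would simply be larger. The sharp $2\sqrt{p}$ is needed to match the \emph{stated} constant $4p$ in \eqref{BDG}, not to make the downstream arguments go through.
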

We refer interested readers to the book \cite{Khoshnevisan} for a nice introduction to Dalang-Walsh's theory. For our purpose, we will often apply BDG as follows.  If $\Phi$ is $\mathbb{F}$-adapted and 
$\|  G_{t-\bullet}(x-\ast) \Phi(\bullet, \ast) \|_{\H}\in L^p(\Omega)$ for some $p\ge 2$, then BDG implies 
\begin{align} \notag
&\left\|  \int_{[0,t]\times\R^2}  G_{t-s}(x-y) \Phi(s,y)W(ds,dy) \right\|_p^2\\ \label{eq2}
 &\qquad  \leq  4p\left\|  \int_{[0,t]\times\R^4}G_{t-s}(x-z)  G_{t-s}(x-y) \Phi(s,y) \Phi(s,z) \|y-z\|^{-\beta}dsdzdy \right\|_{p/2},
\end{align}
by viewing  $\int_{[0,t]\times\R^2}  G_{t-s}(x-y) \Phi(s,y)W(ds,dy)$ as the martingale 
$$\left\{ \int_{[0,r]\times\R^2}  G_{t-s}(x-y) \Phi(s,y)W(ds,dy) : r\in[0,t] \right\} ~\text{evaluated at at time $t$.}$$

\medskip

Now let us recall some basic facts on the Malliavin calculus associated with $W$. For any unexplained notation and result,   we refer to the book \cite{Nualart}.  We denote by $C_p^{\infty}(\R^n)$ the space of smooth functions with all their partial derivatives having at most polynomial growth at infinity. Let $\mathcal{S}$ be the space of simple functionals of the form 
$F = f(W(h_1), \dots, W(h_n))
$ for $f\in C_p^{\infty}(\RR^n)$ and $h_i \in \HH$, $1\leq i \leq n$. Then, the Malliavin derivative  $DF$ is the $\HH$-valued random variable given by
\begin{align*}
DF=\sum_{i=1}^n  \frac {\partial f} {\partial x_i} (W(h_1), \dots, W(h_n)) h_i\,.
\end{align*}
 The derivative operator $D$  is   closable   from $L^p(\Omega)$ into $L^p(\Omega;  \HH)$ for any $p \geq1$ and   we define $\mathbb{D}^{1,p}$ to be the completion of $\mathcal{S}$ under the norm
$
\|F\|_{1,p} = \left(\E\big[ |F|^p \big] +   \E\big[  \|D F\|^p_\HH \big]   \right)^{1/p} \,.
$

  The {\it chain rule} for $D$ asserts that if $F_1,F_2\in\mathbb{D}^{1,2}$ and $h_1,h_2:\R\to\R$ are Lipschitz, then $h_1(F_1)h_2(F_2)\in\mathbb{D}^{1,1}$ and $h_i(F_i)\in\mathbb{D}^{1,2}$ with 
\begin{equation} \label{cr}
D\big( h_1(F_1) h_2(F_2)\big) = h_2(F_2) Y_1 DF_1 + h_1(F_1) Y_2 DF_2,
\end{equation}
where  $Y_i$ is some  $\sigma\{ F_i\}$-measurable random variable  bounded by the Lipschitz constant of $h_i$ for $i=1,2$;
; when the $h_i$ are  differentiable, we have $Y_i= h_i'(F_i)$, $i=1,2$ (see, for instance, \cite[Proposition 1.2.4]{Nualart}).

We denote by $\delta$ the adjoint of  $D$ given by the duality formula 
\begin{equation} \label{IBP}
\E[\delta(u) F] = \E[ \langle u, DF \rangle_\mathfrak{H}]
\end{equation}
for any $F \in \mathbb{D}^{1,2}$ and $u\in{\rm Dom} \, \delta \subset L^2(\Omega; \HH)$,  the domain of $\delta$. The operator $\delta$ is
also called the Skorohod integral and in the case of the Brownian motion, it coincides
with an extension of the It\^o integral introduced by Skorohod (see \emph{e.g.} \cite{GT, NuPa}).  
In our context, the Dalang-Walsh integral coincides with the Skorohod integral: Any   adapted random field $\Phi$ that satisfies $\E\big[ \|\Phi\|_\H^2 \big]<\infty $ belongs to the domain of $\delta$  and
\[
\delta (\Phi) = 
\int_0^\infty \int_{\R^2} \Phi(s,y) W(d s, d y).
\]
The proof of this result is analogous to the case of  integrals with respect to the Brownian motion  (see \cite[Proposition 1.3.11]{Nualart}), by just replacing real   processes by  $\H_0$-valued processes, where $\H_0$ is defined in \eqref{H_0}.
As a consequence,  the     equation   \eqref{mild} can    be written as 
\begin{equation*}
u(t,x) = 1 +   \delta\big( G_{t-\bullet}(x-\ast) \sigma (  u(\bullet, \ast)  )\big).
\end{equation*}

The operators $D$ and $\delta$ satisfy the commutation relation 
\begin{equation} \label{COMM}
[D, \delta]V: =(D\delta - \delta D)(V) = V.
\end{equation}

By Fubini's theorem and the duality formula \eqref{IBP}, we can interchange the Skorohod integral and Lebesgue integral: Suppose $f_x\in\text{Dom}\delta$ is adapted for each $x$ in some finite measure space $(E, \mu)$ such that $\int_E f_x\mu(dx)$ also belongs to $\text{Dom}\delta$ and $\E\int_E \| f_x\|_\H^2\mu(dx)<\infty$, then 
\begin{align}\label{SF}
\delta\left( \int_E f_x\mu(dx) \right) = \int_E \delta(f_x)\mu(dx)~\text{almost surely}.
\end{align}
Indeed, for any $F\in\mathcal{S}$, 
 \begin{align*}
 \E\left[ F\delta\left( \int_E f_x\mu(dx) \right) \right] &= \E \big\langle DF,  \int_E f_x\mu(dx)  \big\rangle_\H = \int_E \E \big\langle DF, f_x\big\rangle_\H \mu(dx) \\
& = \int_E \E\big[ F \delta(f_x)\big] \mu(dx)  =  \E\left[ F \int_E \delta(f_x) \mu(dx)  \right], 
 \end{align*}
 which gives us \eqref{SF}.  In particular, the equalities in \eqref{sFub} are valid. 
 
With the help of the derivative operator, we can represent $F\in\mathbb{D}^{1,2}$ as a stochastic integral. This is the content of the following two-parameter Clark-Ocone formula, see \emph{e.g.} \cite[Proposition 6.3]{CKNP19} for a proof.

 \begin{lemma}[Clark-Ocone formula] \label{CO}    Given $F\in\mathbb{D}^{1,2}$, we have almost surely
 \[
 F = \E[ F] + \int_{\R_+\times\R^2} \E\big[ D_{s,y} F \vert \F_s \big] W(ds,dy).
 \]
 \end{lemma}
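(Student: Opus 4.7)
The plan is to establish the identity on a dense subclass of $\mathbb{D}^{1,2}$ and then extend by continuity. The natural dense subclass is the linear span of the multiple Wiener--It\^o integrals $I_n(f_n)$, $n\ge 0$, with respect to $W$.

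Every $F\in L^2(\Omega,\sigma(W),\P)$ admits the orthogonal Wiener chaos decomposition $F=\sum_{n\ge 0}I_n(f_n)$ with $f_n\in\H^{\otimes n}$ symmetric. The condition $F\in\mathbb{D}^{1,2}$ is equivalent to $\sum_{n\ge 1}n\cdot n!\,\|f_n\|_{\H^{\otimes n}}^2<\infty$, in which case $D_{s,y}F=\sum_{n\ge 1}n\,I_{n-1}\bigl(f_n(\cdot,(s,y))\bigr)$. Because the noise is white in time, $W$ may be viewed as a cylindrical Brownian motion valued in the spatial Hilbert space $\H_0$ from \eqref{H_0}, so the standard iterated-integral representation applies: for each symmetric $f_n$,
\begin{equation*}
I_n(f_n)=n\int_{\R_+\times\R^2}I_{n-1}\bigl(f_n(\cdot,(s,y))\1_{[0,s]^{n-1}}\bigr)\,W(ds,dy),
\end{equation*}
and the inner integrand is $\F_s$-measurable. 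On the other hand, the martingale identity $\E[I_m(g)\mid\F_s]=I_m(g\1_{[0,s]^m})$ yields
\begin{equation*}
\E\bigl[D_{s,y}I_n(f_n)\mid\F_s\bigr]=n\,I_{n-1}\bigl(f_n(\cdot,(s,y))\1_{[0,s]^{n-1}}\bigr).
\end{equation*}
Since $\E[I_n(f_n)]=0$ for $n\ge 1$, the two displays together establish the Clark--Ocone identity on every chaos and, by linearity, on every finite sum of chaoses.

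For general $F\in\mathbb{D}^{1,2}$, set $F_N=\E[F]+\sum_{n=1}^N I_n(f_n)$, so that $F_N\to F$ and $DF_N\to DF$ in $L^2(\Omega)$ and $L^2(\Omega;\H)$, respectively. Since $\E[D_{s,y}F\mid\F_s]$ is $\F_s$-adapted, the Dalang--Walsh isometry (the $p=2$ equality case of \eqref{BDG}) combined with conditional Jensen gives
\begin{equation*}
\Bigl\|\int\E[D_{s,y}F_N\mid\F_s]\,W(ds,dy)-\int\E[D_{s,y}F\mid\F_s]\,W(ds,dy)\Bigr\|_2\le\|DF_N-DF\|_{L^2(\Omega;\H)}\,.
\end{equation*}
Passing to the limit $N\to\infty$ in the chaos-level identity completes the proof.

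The main technical point is the chaos-level step, where the multiple integrals and the conditional expectation formula must be interpreted in the spatially-colored setting. This is handled via the canonical isometric isomorphism between $\H$ and $L^2(\R_+;\H_0)$, which absorbs the Riesz-type spatial covariance into the Hilbert-space structure and reduces the needed chaos manipulations, in particular $\E[I_m(g)\mid\F_s]=I_m(g\1_{[0,s]^m})$ and the iterated-integral representation of $I_n(f_n)$, to their classical counterparts for cylindrical Brownian motion.
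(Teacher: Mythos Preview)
Your chaos-expansion argument is correct and is a standard route to the Clark--Ocone formula in the white-in-time setting. The paper itself does not prove this lemma; it merely cites \cite[Proposition~6.3]{CKNP19}, so there is no in-paper proof to compare against. One small wording quibble: what you invoke as ``the $p=2$ equality case of \eqref{BDG}'' is really the It\^o isometry for the Dalang--Walsh integral (used, for instance, in the proof of Proposition~\ref{FRCOV1}), not the BDG inequality \eqref{BDG}, which carries the constant $4p$ and is only an upper bound.
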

 We end this section with the following useful fact:  
 If $\big\{\Phi_s: s\in\R_+\big\}$ is a jointly measurable and  integrable process satisfying  $\int_{\R_+} \big(\text{Var} ( \Phi_s)\big)^{1/2} ds<\infty$, then
\begin{align}\label{VARF}  
  \sqrt{ \text{Var} \left(  \int_{\R_+} \Phi_sds \right) } \leq   \int_{\R_+} \sqrt{\text{Var} ( \Phi_s)} ds.
  \end{align}

    \section{Gaussian fluctuation of the spatial averages}
  We follow the three steps described in our introduction.  
    
 \subsection{Limiting covariance structure} \label{sub41}
\begin{proposition}\label{FRCOV1}  Suppose $t_1, t_2\in(0,\infty)$. We have, with $\xi(s) = \E\big[ \sigma( u(s,0)) \big]$,
 \begin{align}\label{FRCOV}
 \frac{  \E\big[ F_R(t_1) F_R(t_2) \big] }{R^{4-\beta} } \xrightarrow{R\to\infty} 4\pi^2 c_\beta \kappa_\beta \int_0^{t_1\wedge t_2}   (t_1-s)(t_2-s) \xi^2(s) ds
 \end{align}
 with $\kappa_\beta= \int_{\R^2}d\xi \| \xi\|^{\beta-4} J_1(\| \xi \|)^2\in(0,\infty)$. In particular, for any $t>0$,
\[
{\rm Var}\big(F_R(t)\big) R^{\beta-4} \xrightarrow{R\to\infty} 4\pi^2 c_\beta \kappa_\beta \int_0^{t}  (t-s)^2   \xi^2(s) ds.
\]
\end{proposition}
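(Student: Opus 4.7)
The plan is to start from the representation $F_R(t) = \delta(\varphi_{t,R}(\bullet,\ast)\sigma(u(\bullet,\ast)))$ supplied by \eqref{sFub}. Since $\sigma\circ u$ is adapted and square-integrable, this Skorohod integral is a Dalang-Walsh integral with zero mean, and the polarized It\^o isometry yields
\[
\E[F_R(t_1)F_R(t_2)] = \int_0^{t_1\wedge t_2}\!\!\int_{\R^4}\varphi_{t_1,R}(s,y)\varphi_{t_2,R}(s,z)\,\E[\sigma(u(s,y))\sigma(u(s,z))]\,\|y-z\|^{-\beta}\,dy\,dz\,ds.
\]
By spatial stationarity, $\E[\sigma(u(s,y))]=\xi(s)$, so I decompose the two-point function as $\xi^2(s)+\Theta(s,y-z)$ with $\Theta(s,w):=\Cov(\sigma(u(s,0)),\sigma(u(s,w)))$; the variance statement is just the specialization $t_1=t_2=t$. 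After dividing by $R^{4-\beta}$, the $\xi^2(s)$-piece equals $\int_0^{t_1\wedge t_2}\xi^2(s)\Psi_R(t_1,t_2;s)\,ds$ with $\Psi_R$ as in Lemma~\ref{lemfact}. That lemma provides a uniform bound on $\Psi_R$ together with the pointwise limit $4\pi^2c_\beta\kappa_\beta(t_1-s)(t_2-s)$; since $\xi$ is bounded on $[0,t_1\vee t_2]$ (from the Lipschitz property of $\sigma$ and the standard $L^2$-bound on $u$), dominated convergence delivers the claimed main contribution.

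It remains to show that
\[
E_R := R^{\beta-4}\int_0^{t_1\wedge t_2}\!\!\int_{\R^4}\varphi_{t_1,R}(s,y)\varphi_{t_2,R}(s,z)\,\Theta(s,y-z)\,\|y-z\|^{-\beta}\,dy\,dz\,ds
\]
tends to $0$. The main obstacle is to establish decay of the form $\sup_{s\in[0,T]}|\Theta(s,w)|\to 0$ as $\|w\|\to\infty$, with $T:=t_1\vee t_2$. To obtain it I apply the Clark-Ocone formula (Lemma~\ref{CO}) to both $\sigma(u(s,y))-\xi(s)$ and $\sigma(u(s,z))-\xi(s)$ and use the It\^o isometry to get
\[
\Theta(s,y-z) = \int_0^s\!\!\int_{\R^4}\E\!\left[\E[D_{r,w}\sigma(u(s,y))|\F_r]\,\E[D_{r,w'}\sigma(u(s,z))|\F_r]\right]\|w-w'\|^{-\beta}\,dw\,dw'\,dr.
\]
Combining Cauchy-Schwarz in $L^2(\Omega)$, conditional Jensen, the Malliavin chain rule $\|D_{r,w}\sigma(u(s,y))\|_2\leq L\|D_{r,w}u(s,y)\|_2$, and the pointwise upper bound in Theorem~\ref{THM} controls the integrand by a constant times $G_{s-r}(y-w)G_{s-r}(z-w')$. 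Since $G_{s-r}$ is supported in $B_{s-r}$, on this support one has $\|w-w'\|\geq \|y-z\|-2(s-r)$ whenever $\|y-z\|>2s$; together with $\int_{\R^2}G_t=t$ this gives $|\Theta(s,y-z)|\lesssim s^3(\|y-z\|-2s)^{-\beta}$, uniformly in $s\in[0,T]$.

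The last step is bookkeeping. Set $h(r):=\sup_{s\in[0,T]}|\Theta(s,w)|$ for any $w$ with $\|w\|=r$; then $h$ is bounded on $\R_+$ and $h(r)\to 0$. By Lemma~\ref{LFact}-(2), $\varphi_{t,R}(s,\cdot)$ has $L^\infty$-norm $O(1)$ and is supported in $B_{R+t}$, so its $L^1$-norm is $\lesssim R^2$. The change of variables $w=y-z$ gives
\[
\int_{\R^4}\varphi_{t_1,R}\varphi_{t_2,R}\,h(\|y-z\|)\,\|y-z\|^{-\beta}\,dy\,dz \;\lesssim\; R^2\!\int_{\|w\|\leq 2R+t_1+t_2}\!h(\|w\|)\,\|w\|^{-\beta}\,dw.
\]
Splitting the $w$-integral at a large radius $R_0$ on which $h\leq\varepsilon$ bounds the right-hand side by $O(R_0^{2-\beta}+\varepsilon R^{2-\beta})$; multiplying by $R^{\beta-4}$ and sending $R\to\infty$ (using $\beta<2$) produces $\limsup|E_R|\lesssim \varepsilon$. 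As $\varepsilon$ is arbitrary, $E_R\to 0$, completing the proof.
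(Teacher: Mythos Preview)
Your proof is correct and follows essentially the same route as the paper: It\^o isometry, the decomposition into the $\xi^2(s)$-piece treated via Lemma~\ref{lemfact} and a covariance remainder, and control of the latter through Clark--Ocone plus the pointwise Malliavin bound of Theorem~\ref{THM} together with a near/far split in $\|y-z\|$. The only cosmetic difference is that you package the covariance decay as a uniform-in-$s$ function $h(r)\to 0$, whereas the paper argues pointwise in $s$ and invokes dominated convergence once more for the $s$-integral.
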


\begin{proof} Recall that $  F_R(t)  = \int_0^t \int_{\R^2} \varphi_{t,R}(s,y) \sigma ( u(s,y)  ) W(ds,dy)$. Then,  by It\^o's isometry,  
 \[
 \E\big[ F_R(t_1) F_R(t_2) \big]=  \int_0^{t_1\wedge t_2}  \int_{\R^4} \varphi_{t_1,R}(s,y) \varphi_{t_2,R}(s,z) \| y - z\|^{-\beta}  \E\big[  \sigma ( u(s,y)  ) \sigma ( u(s,z)  )\big] dydz ds.
 \]
 We claim that,  as $R\to\infty$,
 \begin{align}\label{weclaim}
  R^{\beta-4}  \int_0^{t_1\wedge t_2}  \int_{\R^4} \varphi_{t_1,R}(s,y) \varphi_{t_2,R}(s,z) \| y - z\|^{-\beta}  \text{Cov}\big[  \sigma ( u(s,y)  ),  \sigma ( u(s,z)  )\big] dydz ds \to 0.
 \end{align}
 Assuming \eqref{weclaim}, we can deduce from Lemma \ref{lemfact}, the stationarity of the process $\{u(t,x): x\in \R^2\}$ and dominated convergence  that
 \[
 \lim_{R\to\infty} \frac{  \E\big[ F_R(t_1) F_R(t_2) \big] }{R^{4-\beta} }  = \lim_{R\to\infty}  \int_0^{t_1\wedge t_2} \xi^2(s) \Psi_R(t_1, t_2;s)ds = \text{RHS of \eqref{FRCOV}},
 \]
 where $\xi(s) = \E[ \sigma(u(s,0)) ]$ is uniformly bounded over $s\in[0, t_1\wedge t_2]$.
 
 \medskip
 
We need to prove \eqref{weclaim} now and it is enough to show for any $s\in(0,t_1\wedge t_2]$
 \begin{align} \label{enough1}
\lim_{\| y -z \| \to\infty}   \text{\rm Cov}\big[ \sigma ( u(s,y)  ) ,  \sigma ( u(s,z)  )\big] =0.
 \end{align}
Indeed,  if \eqref{enough1} holds for any given $s\in(0, t_1\wedge t_2]$, then for arbitrarily small $\e>0$, there is some $K = K(\e,s)$ such that $  \text{Cov}\big[ \sigma ( u(s,y)  ) ,  \sigma ( u(s,z)  )\big]  < \e$, for $\|  y - z \| \geq K$. By Lemma \ref{lemfact}, we deduce 
 \begin{align*}
 &R^{\beta-4}    \int_{\|  y - z \| \geq K} \varphi_{t,R}(s,y) \varphi_{t,R}(s,z) \| y - z\|^{-\beta}  \text{\rm Cov}\big[ \sigma ( u(s,y)  ) ,  \sigma ( u(s,,z)  )\big] dydz  \\
 & \leq  \e \Psi_R(t_1, t_2; s) \lesssim \e,
 \end{align*}
 while  using the  uniform $L^2$-boundedness of $u(t,x)$, we get
  \begin{align*}
 &R^{\beta-4}    \int_{\|  y - z \| < K} \varphi_{t,R}(s,y) \varphi_{t,R}(s,z) \| y - z\|^{-\beta}  \text{\rm Cov}\big[ \sigma ( u(s,y)  ) ,  \sigma ( u(s,z)  )\big] dydz  \\
 &\lesssim   R^{\beta-4}   \int_{ \|  y - z \| < K } \varphi_{t,R}(s,y) \varphi_{t,R}(s,z) \| y - z\|^{-\beta}  dydz \\
 & = R^{\beta-4}  \int_{\| \xi \| < K}  d\xi \| \xi \|^{-\beta} \left(  \int_{\R^2} \varphi_{t,R}(s, z+\xi) \varphi_{t,R}(s, z) dz  \right) \lesssim     R^{\beta-2}  \int_{\| \xi \| < K}  d\xi \| \xi \|^{-\beta} ~\text{by \eqref{fact00}} \\
 &\lesssim R^{\beta-2} \xrightarrow{R\to\infty} 0.
 \end{align*}
That is, we just proved for any $s\in(0, t_1\wedge t_2]$,
\[
R^{\beta-4}    \int_{\R^4} \varphi_{t,R}(s,y) \varphi_{t,R}(s,z) \| y - z\|^{-\beta}  \text{\rm Cov}\big[ \sigma ( u(s,y)  ) ,  \sigma ( u(s,z)  )\big] dydz\xrightarrow{R\to\infty}0,
\]
where the LHS is uniformly bounded in $R>0$ and $s\in(0,t_1\wedge t_2]$ in view of Lemma \ref{lemfact}. Then the claim \eqref{weclaim} follows from the dominated convergence.

\medskip

It remains to verify  \eqref{enough1}. By  Theorem \ref{THM}, for any $0<s < t$,
 \[
 \| D_{s,y} u(t,x) \|_p \lesssim  G_{t-s}(x-y).
 \]
   By Lemma \ref{CO},
 \[
  \sigma  ( u(s,y)  ) = \E\big[   \sigma  ( u(s,y)  )  \big] + \int_0^s \int_{\R^2} \E\Big[ D_{r,\gamma}\big(  \sigma(  u(s,y)    )  \big) \vert \mathscr{F}_r \Big] W(dr, d\gamma).
 \]
 As a consequence,
\begin{equation*}
 \E\big[ \sigma  (u(s,y) )\sigma  (u(s,z) ) \big]  = \xi^2(s)+ T(s, y,z),
 \end{equation*}
 where
\begin{align}
T(s, y,z)  =\int_0^s    \int_{\R^4}   \E\Big(  \E\big[  D_{r,\gamma} ( \sigma (u(s,y) )  ) | \mathcal{F}_r\big]  \E\big[ D_{r,\gamma'}\big( \sigma (u(s,z)) \big) | \mathcal{F}_r\big]  \Big)  \| \gamma-\gamma' \|^{-\beta} d\gamma d\gamma' dr.  \notag 
\end{align}
By the chain-rule \eqref{cr} for the derivative operator,  
$$
 D_{r,\gamma}\big( \sigma (u(s,y)) \big) = \Sigma_{s,y} D_{r,\gamma} u(s,y)
 $$
 with $\Sigma_{s,y}$  an adapted random field  uniformly bounded by   $L$, where we recall  that $L$ is the Lipschitz constant of $\sigma$. This implies,  
 \begin{align} \notag
   \Big\vert  \E\Big(  \E\big[  D_{r,\gamma} ( \sigma (u(s,y) )  ) | \mathcal{F}_r\big]  \E\big[ D_{r,\gamma'}\big( \sigma (u(s,z)) \big) | \mathcal{F}_r\big]  \Big)  \Big\vert &  \lesssim \big\|D_{r,\gamma} u(s,y)\big\| _2   \big\|D_{r,\gamma'} u(s,z)\big\|_2 \notag \\
   & \lesssim  G_{s-r}(\gamma-y) G_{s-r}(\gamma'-z). \notag 
 \end{align}
   Thus,
 \begin{align*}
\vert  T(s, y,z) \vert &\lesssim \int_0^s \int_{\R^4}  G_{s-r}(\gamma-y) G_{s-r}(\gamma'-z)  \| \gamma- \gamma' \|^{-\beta} d\gamma d\gamma' dr.
 \end{align*}
 Suppose $\| y -z \| > 2s$, then 
 $$G_{s-r}(\gamma-y) G_{s-r}(\gamma'-z)  \| \gamma- \gamma' \|^{-\beta} \leq  G_{s-r}(\gamma-y) G_{s-r}(\gamma'-z) \big(  \| y-z \| - 2s \big)^{-\beta}$$
 from which we get
  \begin{align*}
\vert  T(s, y,z) \vert &\lesssim \big(  \| y-z \| - 2s \big)^{-\beta}  \int_0^s \int_{\R^4}  G_{s-r}(\gamma-y) G_{s-r}(\gamma'-z)  d\gamma d\gamma' dr    \xrightarrow{\| y-z\| \to\infty}0.
 \end{align*}
 This implies  \eqref{enough1} and hence concludes our proof.  \qedhere
 
  \end{proof}

  \subsection{Convergence of finite-dimensional distributions}

As  it was  explained in the introduction, a basic ingredient for the  convergence of finite-dimensional distributions is
the following estimate
 \begin{align}\label{finalbdd}
       R^{2\beta-8} {\rm Var}\big(   \langle DF_R(t_1), V_{t_2,R}  \rangle_\H \big) \lesssim R^{-\beta} ~\text{for $R\geq t_1+t_2$},            
 \end{align}
 where we recall that  $V_{t,R}(s,y) =\varphi_{t,R}(s,y) \sigma(u(s,y))$ and $\varphi_{t,R}$ is defined in \eqref{varphi}.
 
 \medskip
    Note that  the Malliavin-Stein bound \eqref{1DNP} and the above bound \eqref{finalbdd} with $t_1=t_2=t$ lead to the quantitative CLT in  \eqref{QCLT}. In fact, from \eqref{finalbdd} and \eqref{1DNP}, we have for any fixed $t>0$ and $Z\sim \mathcal{N}(0,1)$,
 \[
 d_{\rm TV}\big( F_R(t)/\sigma_R, Z \big) \leq \frac{2}{\sigma_R^2} \sqrt{ {\rm Var}\big(   \langle DF_R(t), V_{t,R}  \rangle_\H \big) }\lesssim
\frac{1}{\sigma_R^2} R^{4-\frac {3\beta}2}, \,\, R\ge 2t ;
 \]
  by Proposition \ref{FRCOV1}, $\sigma_R^2 R^{\beta-4}$ converges to some explicit positive constant, see \eqref{FRCOV}. So we can write, for all $R\ge R_t$
  \[
   d_{\rm TV}\big( F_R(t)/\sigma_R, Z \big) \leq C R^{-\beta/2}, 
  \]
 where $R_t$ is some  constant that does not depend on $R$. As the total variation distance is aways bounded by $1$, we can write for $R\leq R_t$,
 \[
   d_{\rm TV}\big( F_R(t)/\sigma_R, Z \big) \leq  1 \leq (R_t)^{\beta/2} R^{-\beta/2}, \forall R\leq R_t.
   \]
 Therefore, the bound \eqref{QCLT} follows.
 
 \medskip

Note  that  \eqref{finalbdd}, together with Proposition \ref{lem612}, implies the convergence in law of the finite dimensional distributions. In fact, fix any integer   $m\geq 1$ and choose $m$ points $t_1,\ldots,t_m\in(0\,,\infty)$, then
	 consider the  random vector $\Phi_R= \big(F_R(t_1), \dots, F_R(t_m) \big)$  and	  let $\mathbf{G}=(\mathcal{G}_1 \,,\ldots,\mathcal{G}_m)$ denote a centered Gaussian random vector
	with covariance matrix $(C_{i,j})_{1\le i,j\le m}$ given by
	\[
C_{i,j} :=	  4\pi^2 c_\beta \kappa_\beta \int_0^{t_i\wedge t_j} (t_i-s)(t_j-s) \xi^2(s)ds.    
	\]
	Recall from \eqref{sFub} that  
	$F_R(t_i) =\delta( V_{t_i,R})$ for all $i=1,\ldots,m$. 	Then,  by  \eqref{MS612}  we can write
		\begin{equation} \label{equa7}
		\big\vert  \E( h(R^{\frac \beta 2-2}\Phi_R)) -\E (h(\mathbf{G})) \big\vert  \leq \frac{m}{2} \|h ''\|_\infty 
		\sqrt{   \sum_{i,j=1}^m   \E \left( \left|
		 C_{i,j} - R^{\beta-4} \langle  DF_R(t_i)  \,, V_{t_j, R} \rangle_{\HH} \right|^2
		\right)}
	\end{equation}
	for every $h\in C^2(\R^m)$ with bounded second partial derivatives. Thus,  in view of \eqref{equa7},  in order to show the  convergence in law of $R^{\frac \beta 2-2} \Phi_R$ to $\mathbf{G}$, it suffices to show that for any $i,j =1,\dots, m$, 
	\begin{equation} \label{h6}
	\lim_{R\rightarrow \infty}  \E \left( \left\vert   C_{i,j} - R^{\beta-4} \langle  DF_R(t_i)  \,, V_{t_j, R} \rangle_{\HH} \right\vert^2
		\right)=0.
		\end{equation}
	Notice that, by the duality relation  \eqref{IBP} and the convergence \eqref{FRCOV},   we have
	\begin{align} 
	R^{\beta-4}\E \Big( \big\langle DF_R(t_i) \,,V_{t_j,R}   \big\rangle_{\HH} \Big) &=  R^{\beta-4}	\E   \big[   F_R(t_i)  \delta( V_{t_j,R} )  \big] \notag\\
	& =  R^{\beta-4} 	\E   \big[   F_R(t_i)  F_R(t_j)   \big]   \xrightarrow{R\to\infty} C_{i,j}. \label{h7}
	\end{align}
	Therefore, the convergence \eqref{h6} follows immediately from  \eqref{h7} and  \eqref{finalbdd}.
		Hence the finite-dimensional distributions of
	$ \{R^{\frac \beta 2-2} F_R(t): t\in\R_+\}$ converge to those of
	$\mathcal{G}$ as $R\to\infty$.

 \medskip
 The rest of this subsection is then devoted to the proof of \eqref{finalbdd}.  
 
 \medskip
 
 \begin{proof}[Proof of \eqref{finalbdd}]
  Recall from \eqref{sFub} that
 $$ F_R(t) = \int_{B_R} (u(t,x) -1)dx = \delta(V_{t,R}) \quad{\rm with} \quad
V_{t,R}(s,y) = \varphi_{t,R}(s,y) \sigma(u(s,y)).
$$
The commutation relation  \eqref{COMM} implies for $s\le t$, 
\begin{equation} \label{EC2}
D_{s,y} F_R(t) = D_{s,y} \delta(V_{t,R})= V_{t,R}(s,y) + \delta (D_{s,y} V_{t,R}).
\end{equation}
By the chain rule for the derivative operator (see \eqref{cr})
\begin{equation} \label{EC1}
D_{s,y}[ V_{t,R}(r,z)] =\varphi_{t,R}(r,z) D[ \sigma(u(r,z))] =\varphi_{t,R} (r,z) \Sigma_{r,z} D_{s,y} u(r,z),
\end{equation}
where  $\Sigma_{r,z}$ is an adapted random field bounded by the Lipschitz constant of $\sigma$.
  Substituting \eqref{EC1} into \eqref{EC2}, yields, for $s\le t$,
\[
D_{s,y}F_R(t) =   \varphi_{t,R}(s,y) \sigma(u(s,y)) +  \int_s^t\int_{\R^2} \varphi_{t,R}(r,z) \Sigma_{r,z} D_{s,y} u(r,z) W(dr, dz).
\]
Then, for $t_1, t_2\in(0,\infty)$, we can write 
$\big\langle DF_R(t_1), V_{t_2, R} \big\rangle_\H = A_1 + A_2$, with 
\begin{align*}
A_1 &= \big\langle  V_{t_1, R}, V_{t_2, R} \big\rangle_\H =\int_0^{t_1\wedge t_2} \int_{ \R^4} \varphi_{t_1, R}(s,y) \varphi_{t_2, R}(s,z) \sigma( u(s,y)) \sigma( u(s,z)) \| y-z\|^{-\beta}dydzds  
\end{align*}
and
\begin{align*}
A_2 &=\int_0 ^{t_1\wedge t_2}\int_{\R^4}  \left( \int_s^{t_1}\int_{\R^2} \varphi_{t_1, R}(r,z) \Sigma_{r,z}  D_{s,y}u(r,z)  W(dr,dz)\right) \\
& \qquad\qquad \times  \| y -y'\|^{-\beta} V_{t_2,R}(s,y')dsdydy'.  
\end{align*}

\medskip
\noindent
{\it {\rm (i)} Estimation  of ${\rm Var}(A_1)$}. From  \eqref{VARF}, we deduce that $\text{Var}(A_1)$ is bounded by
\begin{align}\label{VAR-term}
\left(    \int_{0}^{t_2\wedge t_1} \left(   \text{Var}   \int_{\R^4} \varphi_{t_1, R}(s,y) \varphi_{t_2, R}(s,z) \sigma( u(s,y)) \sigma( u(s,z)) \| y-z\|^{-\beta}dydz  \right)^{1/2}ds\right)^2.
\end{align}
Note that the variance term  in \eqref{VAR-term} is equal to 
\begin{align}
&  \int_{\R^8} \varphi_{t_1, R}(s,y) \varphi_{t_2, R}(s,z) \varphi_{t_1, R}(s,y') \varphi_{t_2, R}(s,z')   \| y-z\|^{-\beta} \| y'-z'\|^{-\beta} \notag  \\
 &\qquad\qquad \times  \text{Cov}\Big[  \sigma( u(s,y)) \sigma( u(s,z))  ,  \sigma( u(s,y')) \sigma( u(s,z'))      \Big]  dydz dy'dz' . \label{VARterm}
  \end{align}
To estimate the covariance term, we apply the Clark-Ocone formula (see Lemma \ref{CO}) to write 
\begin{align*}
 & \sigma( u(s,y)) \sigma( u(s,z)) -\E[   \sigma( u(s,y)) \sigma( u(s,z)) ] \\
 & \qquad = \int_0^s \int_{\R^2} \E\Big\{ D_{r,\gamma} \big( \sigma( u(s,y)) \sigma( u(s,z)) \big) \vert \F_r \Big\} W(dr, d\gamma).
\end{align*}
Then we apply It\^o's isometry to obtain
\begin{align}
&\quad\text{Cov}\Big[  \sigma( u(s,y)) \sigma( u(s,z))  ,  \sigma( u(s,y')) \sigma( u(s,z'))      \Big]  \label{COV-term}    \\
&= \int_0^s \int_{\R^4} \E\Bigg[  \E\big\{ D_{r,\gamma} \big( \sigma( u(s,y)) \sigma( u(s,z)) \big) \vert \F_r \big\} \E\big\{ D_{r,\gamma'} \big( \sigma( u(s,y')) \sigma( u(s,z')) \big) \vert \F_r \big\} \Bigg]  \notag \\
 & \qquad \times \| \gamma - \gamma'\|^{-\beta}d\gamma d\gamma' dr, \notag
\end{align}
where, by the chain rule \eqref{cr},  
$$D_{r,\gamma} \big( \sigma( u(s,y)) \sigma( u(s,z)) \big) =  \sigma( u(s,y))  \Sigma_{s,z} D_{r,\gamma}  u(s,z)+ \sigma( u(s,z))  \Sigma_{s,y} D_{r,\gamma}  u(s,y).
$$
 Then by Cauchy-Schwarz inequality and Theorem \ref{THM}, we can see that the above covariance term \eqref{COV-term} is bounded by 
\begin{align*}
&\quad \int_0^s \int_{\R^4} \Big\|  D_{r,\gamma} \big( \sigma( u(s,y)) \sigma(u(s,z)) \big)  \Big\|_2  \Big\|  D_{r,\gamma'} \big( \sigma( u(s,y')) \sigma( u(s,z')) \big) \Big\|_2 \| \gamma - \gamma'\|^{-\beta}d\gamma d\gamma' dr \\
&\lesssim  \int_0^sdr \int_{\R^4}  d\gamma d\gamma'    \| \gamma - \gamma'\|^{-\beta}  \Big(   \|  D_{r,\gamma}  u(s,z)  \|_4  +   \|  D_{r,\gamma}  u(s,y)  \|_4\Big)  \\
&\qquad\qquad\qquad \times  \Big(   \|  D_{r,\gamma'}  u(s,z')  \|_4  +   \|  D_{r,\gamma'}  u(s,y')  \|_4\Big)      \\
&\lesssim   \int_0^sdr \int_{\R^4}  d\gamma d\gamma'    \| \gamma - \gamma'\|^{-\beta}  \big(  G_{s-r}(z-\gamma)  +  G_{s-r}(y-\gamma)  \big)  \big(   G_{s-r}(z'-\gamma') +    G_{s-r}(y'-\gamma')  \big).
\end{align*}
Now we can plug the last estimate into   \eqref{VARterm}   for further computations:
\begin{align}
&\quad  \text{Var}  \left( \int_{\R^4} \varphi_{t_1, R}(s,y) \varphi_{t_2, R}(s,z) \sigma( u(s,y)) \sigma( u(s,z)) \| y-z\|^{-\beta}dydz\right)\notag \\
&\lesssim    \int_0^sdr  \int_{\R^{12}} \varphi_{t_1, R}(s,y) \varphi_{t_2, R}(s,z) \varphi_{t_1, R}(s,y') \varphi_{t_2, R}(s,z')   \| y-z\|^{-\beta} \| y'-z'\|^{-\beta}     \| \gamma - \gamma'\|^{-\beta} \notag \\
&   \times \big(  G_{s-r}(z-\gamma)  +  G_{s-r}(y-\gamma)  \big)  \big(   G_{s-r}(z'-\gamma') +    G_{s-r}(y'-\gamma')  \big)  d\gamma d\gamma'  dydz dy'dz'. \label{from}
\end{align}
In order to obtain  $\text{Var}(A_1)\lesssim R^{8-3\beta}$,  it is enough to show  $\sup_{s\leq t_1\wedge t_2} \mathcal{T}_s\lesssim R^{8-3\beta} $ with
\begin{align*}
 \mathcal{T}_s:&=  \int_0^sdr  \int_{\R^{12}} \varphi_{t_1, R}(s,y) \varphi_{t_2, R}(s,z) \varphi_{t_1, R}(s,y') \varphi_{t_2, R}(s,z')   \| y-z\|^{-\beta} \| y'-z'\|^{-\beta}    \\
&  \quad \times  \| \gamma - \gamma'\|^{-\beta}G_{s-r}(z-\gamma)    G_{s-r}(z'-\gamma')  d\gamma d\gamma'  dydz dy'dz'  
\end{align*}
 as other terms from \eqref{from} can be estimated {\it in the same way} with  {\it the same bound}.

   For $s\in(0, t_1\wedge t_2]$, we write, using \eqref{varphi}, 
 \begin{align*}
 \mathcal{T}_s &=   \int_0^s dr  \int_{B_R^4} \int_{\R^{12}}  G_{t_1-s}(x_1-y) G_{t_1-s}(x'_1-y')   G_{t_2-s}(x_2-z)G_{t_2-s}(x'_2-z')G_{s-r}(z-\gamma)         \\
&   \times G_{s-r}(z'-\gamma')     \| \gamma - \gamma'\|^{-\beta} \| y-z\|^{-\beta} \| y'-z'\|^{-\beta}  d\gamma d\gamma'  dydz dy'dz' dx_1dx_1'dx_2dx_2'.
 \end{align*}
 Making the change of variables
 \[
 ( \gamma, \gamma', y, z, y', z', x_1, x_1', x_2, x_2')\to R ( \gamma, \gamma', y, z, y', z', x_1, x_1', x_2, x_2')
 \] 
 and using $G_t(Rz) = R^{-1} G_{tR^{-1}}(z)$ for every $t, R>0$ yields
\begin{align*} 
&R^{-14+3\beta}  \mathcal{T}_s =  \int_0^s dr  \int_{B_1^4} \int_{\R^{12}}  G_{\frac{t_1-s}{R}}(x_1-y) G_{\frac{t_1-s}{R}}(x'_1-y')   G_{\frac{t_2-s}{R}}(x_2-z)G_{\frac{t_2-s}{R}}(x'_2-z')         \\
&   \times G_{\frac{s-r}{R}}(z-\gamma)  G_{\frac{s-r}{R}}(z'-\gamma')     \| \gamma - \gamma'\|^{-\beta} \| y-z\|^{-\beta} \| y'-z'\|^{-\beta}  d\gamma d\gamma'  dydz dy'dz' dx_1dx_1'dx_2dx_2'.
 \end{align*}
Using the fact  \eqref{fact01}, we can integrate out $x_1,x_1',x_2,x_2'$ to bound $R^{-14+3\beta}  \mathcal{T}_s$ by
\begin{align}
& R^{-10+3\beta} (t_1-s)^2 (t_2-s)^2 \int_0^s dr   \int_{\R^{12}}      \1_{\{ \| y\| \vee \| y'\| \vee \|z\| \vee \| z'\| \vee \| \gamma\| \vee \| \gamma' \| \leq  1+ (t_1+t_2)R^{-1}   \}}   \notag   \\
&   \qquad \times G_{\frac{s-r}{R}}(z-\gamma)  G_{\frac{s-r}{R}}(z'-\gamma')     \| \gamma - \gamma'\|^{-\beta} \| y-z\|^{-\beta} \| y'-z'\|^{-\beta}  d\gamma d\gamma'  dydz dy'dz' .  \label{dydy}
\end{align}
Suppose $R\geq t_1+t_2$ and notice that
\[
\sup_{z\in B_2} \int_{B_2} \| y -z\|^{-\beta}dy \leq \int_{B_4} \| y \|^{-\beta}dy = \frac{2\pi}{2-\beta} 4^{2-\beta} <\infty.
\]
Therefore, integrating   out $y, y'$ in \eqref{dydy}, we obtain
 \begin{align*}
\mathcal{T}_s \lesssim &R^{10-3\beta}  \int_0^s dr   \int_{\R^{8}}      \1_{\{   \|z\| \vee \| z'\| \vee \| \gamma\| \vee \| \gamma' \| \leq 2 \}}      G_{\frac{s-r}{R}}(z-\gamma)  G_{\frac{s-r}{R}}(z'-\gamma')     \| \gamma - \gamma'\|^{-\beta}   d\gamma d\gamma'   dz dz'.  
\end{align*}
We further integrate out $z, z'$ and use \eqref{fact01} again to write 
 \begin{align*}
\sup_{s\leq t_1\wedge t_2}\mathcal{T}_s \lesssim &R^{8-3\beta}     \int_{\R^{8}}      \1_{\{   \| \gamma\| \vee \| \gamma' \| \leq 2 \}}          \| \gamma - \gamma'\|^{-\beta}   d\gamma d\gamma'  \lesssim R^{8-3\beta}.
\end{align*}
So we obtain  $\text{Var}(A_1) \lesssim R^{8-3\beta}$ for $R\geq t_1+t_2$, where the implicit constant does not depend on $R$.

\bigskip
Next we estimate the variance of $A_2$.

\medskip
{\it  {\rm (ii)} Estimate of ${\rm Var}(A_2)$}.  Using again \eqref{VARF}, we write
\begin{align*}
\text{Var}(A_2) &\leq \Bigg( \int_{0}^{t_1\wedge t_2} \Bigg\{ \text{Var} \int_{\R^4}  \left( \int_s^{t_1}\int_{\R^2} \varphi_{t_1, R}(r,z) \Sigma_{r,z}  D_{s,y}u(r,z)  W(dr,dz)\right)  \| y -y'\|^{-\beta} \\
&\qquad\qquad \qquad \times \varphi_{t_2, R}(s,y') \sigma(u(s,y'))  dydy' \Bigg\}^{1/2} ds  \Bigg)^{2} =: \left(  \int_{0}^{t_1\wedge t_2} \sqrt{ \mathcal{U}_s} ds \right)^2.
\end{align*}
As before, we will show $\sup_{s\leq t_2\wedge t_1}\mathcal{U}_s\lesssim R^{8-3\beta}$.

 First note that   
 \[
 \int_s^{t_1}\int_{\R^2} \varphi_{t_1, R}(r,z) \Sigma_{r,z}  D_{s,y}u(r,z)  W(dr,dz) = \mathfrak{M}_{s,y}(t_1),
 \]
  where $\big\{\mathfrak{M}_{s,y}(\tau): \tau\in[s, t_1] \big\}$ is the square-integrable martingale given by
\[
\mathfrak{M}_{s,y}(\tau):=\int_s^{\tau}\int_{\R^2} \varphi_{t_1, R}(r,z) \Sigma_{r,z}  D_{s,y}u(r,z)  W(dr,dz).
\]
Then we deduce from the martingale property that
\[
\E\big[ \sigma(u(s,y')) \mathfrak{M}_{s,y}(t_1) \big] = \E\big[ \sigma(u(s,y'))  \E( \mathfrak{M}_{s,y}(t_1) \vert \F_s) \big] = 0,
\]
that is, $\mathfrak{M}(t_1)$ and $\sigma\big(u(s,y')\big) $ are uncorrelated.  Moreover, by It\^o's formula,
\[
\mathfrak{M}_{s,y}(t_1) \mathfrak{M}_{s,  \wt{y}  }(t_1) =\underbrace{ \int_s^{t_1} \mathfrak{M}_{s,y}(\tau)d\mathfrak{M}_{s,\wt{y} }(\tau) +\int_s^{t_1} \mathfrak{M}_{s,\wt{y} }(\tau)d\mathfrak{M}_{s,y}(\tau) }_{\rm martingale-part}+ \langle  \mathfrak{M}_{s,y}, \mathfrak{M}_{s,\wt{y} } \rangle_{t_1},
\]
where the bracket $\langle  \mathfrak{M}_{s,y}, \mathfrak{M}_{s,\wt{y} } \rangle_{t_1}$ between both martingales  is equal to
\[
\int_s^{t_1} \int_{\R^4} \varphi_{t_1, R}(r,z) \Sigma_{r,z}  \big(D_{s,y}u(r,z)\big)\varphi_{t_1, R}(r,\wt{z}) \Sigma_{r,\wt{z}}  \big(D_{s,\wt{y} }u(r,\wt{z})\big) \|z-\wt{z}\|^{-\beta}dzd\wt{z} dr.
\]
So, using the estimate  \eqref{IMP}, we obtain
\begin{align*}
&\quad  \E\Big[ \mathfrak{M}_{s,y}(t_1) \mathfrak{M}_{s,\wt{y} }(t_1)  \sigma(u(s, y')) \sigma(u(s, \wt{y'}))     \Big] \\
&=  \E\Big[ \E\big(\mathfrak{M}_{s,y}(t_1) \mathfrak{M}_{s,\wt{y} }(t_1) \vert \F_s \big) \sigma(u(s, y')) \sigma(u(s, \wt{y'}))     \Big]  \lesssim  \big\| \langle  \mathfrak{M}_{s,y}, \mathfrak{M}_{s,\wt{y} } \rangle_{t_1} \big\|_2\\
& \lesssim \int_s^{t_1} \int_{\R^4} \varphi_{t_1, R}(r,z)    \| D_{s,y}u(r,z)\|_4 \varphi_{t_1, R}(r,\wt{z})  \| D_{s,\wt{y} }u(r,\wt{z})\|_4 \|z-\wt{z}\|^{-\beta}dzd\wt{z} dr \\
&\lesssim \int_s^{t_1} \int_{\R^4} \varphi_{t_1, R}(r,z)   G_{r-s}(y-z)   \varphi_{t_1, R}(r,\wt{z}) G_{r-s}(\wt{y} -\wt{z})   \|z-\wt{z}\|^{-\beta}dzd\wt{z} dr.
\end{align*}
As a consequence,  the variance-term $\mathcal{U}_s$  is indeed a second moment and 
\begin{align*} 
\mathcal{U}_s&= \int_{\R^8} dydy'd\wt{y} d\wt{y'} \| y -y'\|^{-\beta} \| \wt{y} -\wt{y'}\|^{-\beta} \varphi_{t_2, R}(s, y')\varphi_{t_2, R}(s, \wt{y'})   \notag \\
&\qquad\times \E\Big[ \mathfrak{M}_{s,y}(t_1) \mathfrak{M}_{s,y'}(t_1)  \sigma(u(s, y')) \sigma(u(s, \wt{y'}))     \Big]\\
&\lesssim  \int_s^{t_1} dr  \int_{\R^{12}} dzd\wt{z} dydy'd\wt{y} d\wt{y'} \| y -y'\|^{-\beta} \| \wt{y} -\wt{y'}\|^{-\beta} \| z - \wt{z} \|^{-\beta}  \\
&\quad\times \varphi_{t_2, R}(s, y')\varphi_{t_2, R}(s, \wt{y'}) \varphi_{t_1, R}(r,z)\varphi_{t_1, R}(r,\wt{z})  G_{r-s}(y-z)G_{r-s}(\wt{y}-\wt{z}), 
\end{align*}
which has the same kind  of expression as $\mathcal{T}_s$. The same arguments that led to the uniform estimate of $\mathcal{T}_s$ yields 
\[
\sup_{s\leq t_1\wedge t_2} \mathcal{U}_s \lesssim  R^{8-3\beta},
\]
for $R\geq t_1+t_2$, thus we obtain $\text{Var}(A_2) \lesssim R^{8-3\beta}$ for $R\geq t_1+t_2$. Hence, for $R\geq t_1+t_2$,
\[
  R^{2\beta-8} {\rm Var}\big(   \langle DF_R(t_1), V_{t_2,R}  \rangle_\H \big) \lesssim R^{2\beta-8}  \big[ \text{Var}(A_2)  +\text{Var}(A_1) \big]  \lesssim R^{-\beta}.
\]
This completes the proof of  \eqref{finalbdd}. 
 \end{proof}

 \subsection{Tightness}  Set $ q= \frac 2 {4-\beta} \in (1/2,1)$. As explained in the introduction, 
 by the Kolmogorov-Chentsov criterion for tightness, it is enough to prove  the inequality \eqref{KC}:   For any $T>0$, 
  $p\geq 2$ and  for any $0\leq s<t\leq T \leq R$,  
  \begin{align}\label{KOLT}
  \big\| F_R(t) - F_R(s) \big\|_p \lesssim  R^{1/q}\sqrt{t-s},
  \end{align}
  where the implicit constant does not depend on $t,s$ or $R$.
 \begin{proof}[Proof of \eqref{KOLT}]  Recall that
 $
 F_R(t)  = \int_{0}^t\int_{\R^2} \varphi_{t,R}(s,y) \sigma(u(s,y)) W(ds,dy)
 $. Then by BDG inequality \eqref{BDG} and \eqref{Sobolev}   we have, with the convention that $ \varphi_{s,R}(r,y)=0$ if $ r>s$,
 \begin{align*}
   \big\| F_R(t) - F_R(s) \big\|_p^2 &\lesssim
    \Bigg\| \int_{ [0,t]\times \R^4}    ( \varphi_{t,R}(r,y) -\varphi_{s,R}(r,y)\big)  \sigma(u(r,y))        
    ( \varphi_{t,R}(r,z) -\varphi_{s,R}(r,z)\big) \\
    & \qquad \times  \sigma(u(r,z)) \| y-z \|^{-\beta} dydzdr \Bigg\|_{p/2} \\
     &\lesssim  \left\| \int_{0}^t  dr\left(\int_{\R^2} \Big\vert \big( \varphi_{t,R}(r,y) -\varphi_{s,R}(r,y)\big)  \sigma(u(r,y))   \Big\vert^{2q} dy\right)^{1/q}  \right\|_{p/2}.
 \end{align*}    
    Applying Minkowski's inequality yields
    \begin{align}
     \big\| F_R(t) - F_R(s) \big\|_p^2 &\lesssim
\int_{0}^t  dr\left(\int_{\R^2} \big\vert \varphi_{t,R}(r,y) -\varphi_{s,R}(r,y)  \big\vert^{2q} \|  \sigma(u(r,y)) \|_p^{2q}  dy\right)^{1/q} \notag  \\
   &\lesssim \int_{0}^t dr\left(\int_{\R^2} \big\vert \varphi_{t,R}(r,y) -\varphi_{s,R}(r,y)  \big\vert^{2q}  dy\right)^{1/q}.  \label{APP2}
 \end{align}
Note that
  \begin{align*}
   \big\vert \varphi_{t,R}(r,y) -\varphi_{s,R}(r,y)  \big\vert &= {\bf 1}_{\{ r\geq s \}} \int_{B_R} G_{t-r}(x-y)dx \\
   &\quad +  {\bf 1}_{\{ r< s \}}  \int_{B_R}  {\bf 1}_{\{ \| x-y\| < s-r \}} \big[ G_{s-r}(x-y)-G_{t-r}(x-y) \big]dx \\
   &\quad +  {\bf 1}_{\{ r< s \}} \int_{B_R}  {\bf 1}_{\{ \| x-y\| \geq s-r \}} G_{t-r}(x-y) dx \\
   & =: S_1+S_2+S_3.
  \end{align*}
The first summand $S_1$ is bounded by  $ {\bf 1}_{\{ r\geq s \}} (t-r) {\bf 1}_{\{ \| y\| \leq R+t \}}  \leq (t-s){\bf 1}_{\{ \| y\| \leq R+t \}} $,     in view of Lemma \ref{LFact}-(2). For  the second summand, we can write
 \begin{align*}
 S_2 & \le   {\bf 1}_{\{ r< s \}}   {\bf 1}_{\{ \| y\| \leq R+s \}}    \int_{B_R}  {\bf 1}_{\{ \| x\| < s-r \}} \big[ G_{s-r}(x)-G_{t-r}(x) \big]dx \\
  &\leq {\bf 1}_{\{ r< s \}}   {\bf 1}_{\{ \| y\| \leq R+s \}}  \int_{\{\| x\| <s-r\}} \left(\frac{1}{2\pi    \sqrt{ (s-r)^2 - \|x\|^2       }} -\frac{1}{2\pi    \sqrt{ (t-r)^2 - \|x\|^2       }} \right)dx \\
  &= {\bf 1}_{\{ r< s \}}   {\bf 1}_{\{ \| y\| \leq R+s \}}  \sqrt{t-s} \Big( \sqrt{t+s-2r}  -\sqrt{t-s}\Big) \quad\text{by explicit computation}\\
  &\lesssim \sqrt{t-s} {\bf 1}_{\{ \| y\| \leq R+s \}} ;
 \end{align*}
In the same way, the third summand can be bounded as follows 
 \begin{align*}
S_3 \le  {\bf 1}_{\{ r< s \}}  {\bf 1}_{\{ \| y\| \leq R+t \}}  \int_{\R^2}  {\bf 1}_{\{ s-r \leq \| x\| < t-r \}} G_{t-r}(x) dx \lesssim  {\bf 1}_{\{ \| y\| \leq R+t \}}  \sqrt{t-s}.
 \end{align*}
Therefore,  we can continue with \eqref{APP2} to write 
 \begin{align*}
   \big\| F_R(t) - F_R(s) \big\|_p^2  \lesssim \int_{0}^t dr\left(\int_{\R^2} (t-s)^q   {\bf 1}_{\{ \| y\| \leq R+t \}}  dy\right)^{1/q}\lesssim  (t-s) (R+t)^{2/q}.
 \end{align*}
 This implies \eqref{KOLT}.
 \end{proof}

\section{Fundamental estimate on the Malliavin derivative}
This section is devoted to the proof of  Theorem \ref{THM}. After a useful lemma, we study the convergence and moment estimates for the Picard approximation in Section \ref{Picard}. The main body of the proof of 
 Theorem \ref{THM}  is given in Section \ref{body} and we leave proofs of two  technical lemmas to Section \ref{lemmas}. Recall that $\beta\in(0,2)$ is fixed throughout this paper.

\begin{lemma}\label{fori} Given any random field $\{ \Phi(r,z): (r,z)\in\R_+\times\R^2\}$, 
 we have  for any $x\in\R^2$, $0\leq s<t <\infty$ and $p\geq 2$,
 \begin{align}
& \left\| \int_{s}^t dr \int_{\R^4} dydz\, G_{t-r}(x-y) G_{t-r}(x-z)  \Phi(r,z) \Phi(r,y) \| y-z\|^{-\beta} \right\|_{p/2}  \notag  \\ 
& \le K_\beta t^{\frac{(2-2q)^2}{2q}} \int_s^t  dr   \int_{\R^2} dz\, G_{t-r}^{2q}(x-z)  \big\| \Phi(r,z)   \big\|_p^2, \label{RHS1}
 \end{align}
 where $q=\frac{2}{4-\beta}\in(1/2,1)$ and  the  constant  $K_\beta$  only depends on $\beta$.
 \end{lemma}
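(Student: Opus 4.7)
The plan is to reduce everything to a single spatial integral of $G_{t-r}^{2q}(x-y)\|\Phi(r,y)\|_p^2$ through four passes: Minkowski in time, Hardy-Littlewood-Sobolev in space, Minkowski in the expectation, and finally a Hölder interpolation that converts an $L^p$-power $2q$ into $L^p$-power $2$.

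Concretely, I would first use Minkowski's integral inequality to pull the $L^{p/2}(\Omega)$-norm inside the $dr$-integration, reducing matters to bounding, for each fixed $r\in[s,t]$,
\begin{equation*}
\left\| \int_{\R^4} G_{t-r}(x-y)G_{t-r}(x-z)\Phi(r,y)\Phi(r,z)\|y-z\|^{-\beta}\,dy\,dz\right\|_{p/2}.
\end{equation*}
For each $\omega$, the absolute value of the inner double integral is bounded, via \eqref{Sobolev} applied to $f(y)=|G_{t-r}(x-y)\Phi(r,y)|=g(y)$, by $C_\beta \bigl(\int_{\R^2} G_{t-r}^{2q}(x-y)|\Phi(r,y)|^{2q}\,dy\bigr)^{1/q}$, with $q=2/(4-\beta)\in(1/2,1)$ as fixed in the statement.

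Next, I would use the identity $\|Y^{1/q}\|_{p/2}=\|Y\|_{p/(2q)}^{1/q}$, which is valid because $p/(2q)\ge 1$ (recall $p\ge 2>2q$), and then apply Minkowski's integral inequality in $L^{p/(2q)}(\Omega)$ to the positive integrand $Y_r(\omega)=\int G_{t-r}^{2q}(x-y)|\Phi(r,y)|^{2q}\,dy$. This yields
\begin{equation*}
\left\|Y_r^{1/q}\right\|_{p/2} \le \left(\int_{\R^2} G_{t-r}^{2q}(x-y)\,\|\Phi(r,y)\|_p^{2q}\,dy\right)^{1/q}.
\end{equation*}
The final step is the interpolation that absorbs the mismatched exponent: Hölder's inequality with dual exponents $1/(1-q)$ and $1/q$ gives
\begin{equation*}
\int G_{t-r}^{2q}(x-y)\|\Phi(r,y)\|_p^{2q}dy \le \Bigl(\int G_{t-r}^{2q}\Bigr)^{1-q}\Bigl(\int G_{t-r}^{2q}(x-y)\|\Phi(r,y)\|_p^{2}\,dy\Bigr)^{q},
\end{equation*}
after which raising to the power $1/q$ and invoking Lemma \ref{LFact}(1) to bound $\int G_{t-r}^{2q}\lesssim (t-r)^{2-2q}\le t^{2-2q}$ produces the factor $t^{(2-2q)(1-q)/q}=t^{(2-2q)^2/(2q)}$, which is exactly the prefactor in the statement. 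Integrating back over $r\in[s,t]$ completes the argument.

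The only genuine subtlety is the bookkeeping of the exponents, particularly verifying that $p/(2q)\ge 1$ so that Minkowski applies in the third step, and the identification $(2-2q)(1-q)/q=(2-2q)^2/(2q)$; everything else is a mechanical chain of inequalities. The HLS step requires the spatial integrals to be interpreted via absolute values before invoking \eqref{Sobolev}, which is legitimate since the Riesz kernel $\|y-z\|^{-\beta}$ is nonnegative.
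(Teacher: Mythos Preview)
Your proof is correct and follows essentially the same route as the paper's: Hardy--Littlewood--Sobolev \eqref{Sobolev} to collapse the double spatial integral, then the H\"older interpolation against the finite measure $G_{t-r}^{2q}(x-y)\,dy$ (total mass $\frac{(2\pi)^{1-2q}}{2-2q}(t-r)^{2-2q}$ by \eqref{fact01}) to pass from exponent $2q$ to exponent $2$, producing the factor $t^{(2-2q)^2/(2q)}$. The only difference is ordering: the paper performs both the HLS step and the H\"older interpolation \emph{pointwise in $\omega$} first, obtaining the deterministic bound $K_\beta t^{(2-2q)^2/(2q)}\int G_{t-r}^{2q}(x-y)|\Phi(r,y)|^2\,dy$ for the inner integrand, and only then applies a single Minkowski to pull $\|\cdot\|_{p/2}$ through the $dr\,dy$ integration; you instead thread Minkowski through in two stages (first in $r$, then in $y$ at exponent $p/(2q)$) and apply H\"older to the deterministic quantity $\|\Phi(r,y)\|_p^{2q}$ at the end. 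The paper's ordering is marginally cleaner since it avoids the auxiliary check $p/(2q)\ge 1$, but the two arguments are equivalent.
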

\begin{proof} By  \eqref{Sobolev}, there exists some constant $C_\beta$ that only depends on $\beta$ such that 
\begin{align*}
&\quad  \int_{\R^4} dydz\, G_{t-r}(x-y) G_{t-r}(x-z)  \Phi(r,z) \Phi(r, y) \| y-z\|^{-\beta} \\
& \leq C_\beta \left(   \int_{\R^2} dy\, G_{t-r}^{2q}(x-y)  \vert \Phi(r,y) \vert^{2q}   \right)^{1/q} \\
&\leq C_\beta \left(\frac{(2\pi)^{1-2q} }{2-2q} (t-r)^{2-2q} \right)^{\frac{1}{q}-1}  \int_{\R^2} dy\, G_{t-r}^{2q}(x-y)  \vert \Phi(r,y) \vert^2 \\
&\leq K_\beta t^{\frac{(2-2q)^2}{2q}}\int_{\R^2} dy\, G_{t-r}^{2q}(x-y)  \vert \Phi(r,y) \vert^2,
 \end{align*}
where we have used the fact that $G_{t-r}^{2q}(y)dy$,  with $2q<2$, is a finite measure on $\R^2$ with total mass $\frac{(2\pi)^{1-2q} }{2-2q} (t-r)^{2-2q}$ in view of  \eqref{fact01} and we have put $K_\beta=C_\beta  \big( \frac { (2\pi)^{1-2q}}{2-2q} \big)^{\frac 1q-1}$.
 Therefore, a further application of Minkowski's inequality yields  the bound in  \eqref{RHS1}.\qedhere
\end{proof}

 \subsection{Moment estimates for the Picard approximation}  \label{Picard}
 
  Recall the Picard iteration introduced in \eqref{Pi}:  $u_0(t,x)= 1$ and
\begin{equation} \label{picar}
 u_{n+1}(t,x) = 1 + \int_0^t \int_{\R^2} G_{t-s}(x-y) \sigma\big( u_n(s,y) \big) W(ds, dy) ~ \text{for $n\geq 0$}.
\end{equation}
  Using the estimates  \eqref{eq2} and  \eqref{RHS1},
    we can  write with $2q= \frac{4}{4-\beta}\in(1,2)$, $p\ge 2$ and $n\ge 1$,
   \begin{align*}
&  \| u_n(t,x)\|_p^2  \le  2+8p \\
 & \quad \times 
 \left\|  \int_{[0,t]\times\R^4}G_{t-s}(x-z)  G_{t-s}(x-y) \sigma( u_n(s,y) )\sigma( u_n(s,z) ) \|y-z\|^{-\beta}dsdzdy \right\|_{p/2} \\
 & \qquad  \qquad \le 
 2+ 8pK_\beta t^{\frac { (2-2q)^2} { 2q}}  \int_0^t  ds   \int_{\R^2} G_{t-s}^{2q}(x-y)  \| \sigma ( u_{n-1}(s,y) )\|_p^2  dy.
    \end{align*}
 Then,   using   \eqref{fact01}, we can write  
 \begin{align*}
  \| u_n(t,x)\|_p^2  &\le  2+ 8pK_\beta t^{\frac { (2-2q)^2} { 2q}}  \int_0^t  ds   \int_{\R^2} G_{t-s}^{2q}(x-y) 
 \Big( 2 \sigma(0)^2 +2L^2 \| u_{n-1}(s,y) \|_p^2 \Big)  dy\\
 & \le  2+  \frac { 16pK_\beta (2\pi)^{1-2q}  }{ (2-2q) (3-2q)}t^{\frac { (2-2q)^2} { 2q}+3-2q}  \sigma(0)^2\\
 & \qquad  +   16pK_\beta t^{\frac { (2-2q)^2} { 2q}} L^2 \int_0^t  ds   \int_{\R^2} G_{t-s}^{2q}(x-y) 
  \| u_{n-1}(s,y) \|_p^2  dy,
 \end{align*}
where $L$ is the Lipschitz constant of $\sigma$.
 This leads to
 \begin{equation} \label{eq11}
 H_n(t) \le c_1 + c_2 \int_0^t  dsH_{n-1}(s),
 \end{equation}
 where $H_n(t)= \sup_{x\in \R^2} \| u_n(t,x)\|_p^2$,  
 \[
 c_1:= 2+  \frac { pK^*_\beta  \sigma(0)^2 }{ 3-2q}  t^{\frac { (2-2q)^2} { 2q}+3-2q}  
\quad  {\rm
  and
 } \quad
 c_2:=   pK^*_\beta   L^2    t^{\frac { (2-2q)^2} { 2q}+2-2q},
 \]
 where $K^*_\beta= \frac {16 K_\beta   (2\pi)^{1-2q}}{2-2q}= 16 C_\beta \big(  \frac {(2\pi)^{1-2q}}{2-2q}  \big)^{ 1/q} $ is a constant depending only on $\beta$.
 Therefore, by iterating the inequality \eqref{eq11} and taking into account that $H_0(t)=1$, yields
 \[
 H_n(t) \le c_1 \exp(c_2 t).
 \]
 In what follows, we will denote by $C_\beta^*$ a generic constant that only depends on $\beta$ and may be different from line to line. In this way, we obtain  
 \[
  \| u_n(t,x)\|_p \leq \big( \sqrt{2}+ \sqrt{p} C^*_\beta  t^{\frac  {3-\beta}2} \vert \sigma(0) \vert \big) \exp\big(  pC^*_\beta t^{ 2-\beta} L^2\big).
  \]
  As a consequence,
  \begin{equation}  \label{sigmap}
  \| \sigma(u_n(t,x)) \|_p \le | \sigma(0)| + L\big( \sqrt{2}+ \sqrt{p} C^*_\beta  t^{\frac  {3-\beta}2} \vert \sigma(0) \vert \big) \exp\big(  pC^*_\beta t^{2-\beta} L^2\big)     =: \kappa_{p,t}.
  \end{equation}

 \medskip
 
 \subsection{Proof of Theorem \ref{THM}} \label{body}
 The proof will be done in several steps.
 
 \medskip
 \noindent
 {\bf Step 1.} In this step, we will establish the following estimate \eqref{Conq1} for the $p$-norm of the Malliavin derivative of the Picard iteration.

 \begin{proposition}\label{PROP32}
 For any $n\ge 3$
 and any $p\ge 2$
\begin{equation}   \label{Conq1}
  \big\| D_{s,y} u_{n+1}(t,x) \big\|_p \leq   C_{\beta,p, t, L}  \kappa_{p,t} G_{t-s} (x-y),
\end{equation}
 for almost all $(s,y) \in [0,t]\times \R^2$,  where $ \kappa_{p,t} $ is defined in \eqref{sigmap} and the constant $ C_{\beta,p, t, L}$ is given by
\begin{align}
C_{\beta,p, t, L}&:=1+ \sqrt{p}  L C^*_\beta t^{\frac 1q -\frac 12} +  pC_\beta^*L^2  t^{ \frac 2q-1  }+  \sum_{k=3} ^\infty  \frac{ (pC_\beta^*L^2)^{k/2}    }{\sqrt{(k-2)!}}     t^{  k( \frac 1q -\frac 12)},  \label{Cbeta} 
   \end{align}
   with $C^*_\beta$ a constant only depending on $\beta$. 
 \end{proposition}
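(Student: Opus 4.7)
The plan is to prove \eqref{Conq1} by unfolding the Picard iteration \eqref{picar} under the Malliavin derivative and then iterating the resulting inequality all the way down to $u_0\equiv 1$, so that the bound becomes uniform in $n$. First I would differentiate \eqref{picar} using the chain rule \eqref{cr} and the commutation relation \eqref{COMM} to obtain, for almost every $(s,y)\in[0,t]\times\R^2$,
\begin{align*}
D_{s,y}u_{n+1}(t,x) = G_{t-s}(x-y)\sigma(u_n(s,y)) + \int_s^t\!\!\int_{\R^2} G_{t-r}(x-z)\,\Sigma^{(n)}_{r,z}\,D_{s,y}u_n(r,z)\,W(dr,dz),
\end{align*}
where $\Sigma^{(n)}_{r,z}$ is adapted with $|\Sigma^{(n)}_{r,z}|\leq L$. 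Taking $L^p$-norms and invoking BDG \eqref{eq2}, Lemma \ref{fori}, and the moment bound \eqref{sigmap}, and writing $f_n(t,x;s,y):=\|D_{s,y}u_n(t,x)\|_p$, one lands on the basic recursion
\begin{align*}
f_{n+1}(t,x;s,y) \leq \kappa_{p,t}\,G_{t-s}(x-y) + B\left(\int_s^t\!\!\int_{\R^2} G_{t-r}^{2q}(x-z)\,f_n^2(r,z;s,y)\,dz\,dr\right)^{1/2},
\end{align*}
with $B = 2L\sqrt{pK_\beta}\,t^{(2-2q)^2/(4q)}$ and $q=\frac{2}{4-\beta}\in(1/2,1)$.

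Abbreviating $\mathcal{J}[g](t,x;s,y):=\int_s^t\!\!\int_{\R^2}G_{t-r}^{2q}(x-z)g(r,z;s,y)\,dz\,dr$ and noting that $\sqrt{\mathcal{J}[\cdot]}$ is an $L^2$-norm with respect to the measure $G_{t-r}^{2q}(x-z)\,dz\,dr$, Minkowski's inequality gives $\sqrt{\mathcal{J}[f_n^2]}\leq \kappa_{p,t}\sqrt{\mathcal{J}[G^2]}+B\sqrt{\mathcal{J}^2[f_{n-1}^2]}$. Iterating this $n+1$ times and using that $f_0=0$ (since $u_0\equiv 1$) produces the telescoping, $n$-uniform bound
\begin{align*}
f_{n+1}(t,x;s,y) \leq \kappa_{p,t}\sum_{k=0}^{n} B^k\sqrt{\mathcal{J}^k[G^2](t,x;s,y)},
\end{align*}
where $\mathcal{J}^k$ denotes the $k$-fold composition.

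The \emph{main obstacle}, and technical heart of the proof, is the pointwise estimate
\begin{align*}
\sqrt{\mathcal{J}^k[G^2](t,x;s,y)} \leq c_k(t)\,G_{t-s}(x-y), \qquad k\geq 0,
\end{align*}
with coefficients $c_k(t)$ featuring an inverse factorial decay $c_k(t)\asymp t^{k(1/q-1/2)}/\sqrt{(k-2)!}$ for $k\geq 3$, so that the resulting series $\kappa_{p,t}\sum_{k\geq 0}B^kc_k(t)$ converges and reproduces precisely $\kappa_{p,t}C_{\beta,p,t,L}$ from \eqref{Cbeta}. This step is delicate in two dimensions because $G_t(z)=(2\pi)^{-1}(t^2-\|z\|^2)_+^{-1/2}$ blows up on the light cone $\|z\|=t$ and $G_t^2$ is not even locally integrable there, so the iterated convolutions $G_{u_k}^{2q}\ast\cdots\ast G_{u_2}^{2q}\ast G_{u_1}^{2}$ cannot be bounded by any naive argument and must be analyzed quantitatively via Lemma \ref{LEM1}. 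I expect the cases $k=0,1,2$ to require separate explicit computations, producing the three initial summands of \eqref{Cbeta} (which also explains the hypothesis $n\geq 3$, needed so the iteration reaches at least the $k=3$ regime), while the cases $k\geq 3$ should follow by repeated application of Lemma \ref{LEM1} to the spatial convolutions, combined with the $(k-2)$ innermost time integrations over $[s,t]$ that deliver the $1/\sqrt{(k-2)!}$ factor after the first convolutions have absorbed the non-integrable $G^2$ singularity.

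Assembling these estimates and summing the resulting geometric-factorial series yields $f_{n+1}(t,x;s,y)\leq C_{\beta,p,t,L}\,\kappa_{p,t}\,G_{t-s}(x-y)$, which is exactly \eqref{Conq1}.
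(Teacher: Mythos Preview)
Your recursion $f_{n+1}\le \kappa_{p,t}G_{t-s}(x-y)+B\sqrt{\mathcal{J}[f_n^2]}$ is correctly derived, but the telescoped bound $f_{n+1}\le \kappa_{p,t}\sum_{k=0}^n B^k\sqrt{\mathcal{J}^k[G^2]}$ is vacuous for every $k\ge 1$: already
\[
\mathcal{J}[G^2](t,x;s,y)=\int_s^t\!\!\int_{\R^2} G_{t-r}^{2q}(x-z)\,G_{r-s}^{2}(z-y)\,dz\,dr=+\infty
\]
for almost every $(t,x,s,y)$ with $\|x-y\|<t-s$. Indeed $G_{r-s}^{2}(z-y)=(2\pi)^{-2}\big[(r-s)^2-\|z-y\|^2\big]_+^{-1}$ has a non-integrable singularity on the circle $\|z-y\|=r-s$, and for generic $r\in(s,t)$ the factor $G_{t-r}^{2q}(x-z)$ is strictly positive on an arc of that circle. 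So ``separate explicit computations for $k=0,1,2$'' cannot rescue the scheme, and for $k\ge 3$ there is nothing to regularize: $\mathcal{J}^k[G^2]=\mathcal{J}\big[\mathcal{J}^{k-1}[G^2]\big]$ is infinite because the innermost application already is.

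The source of the problem is the order in which you apply Lemma~\ref{fori} and the iteration. Lemma~\ref{fori} separates the outer kernel as $G_{t-r}^{2q}$ and leaves $\|\Phi(r,z)\|_p^2$ intact; if you then substitute $\Phi=D_{s,y}u_n$ and feed the recursion back in, the \emph{initial} term produces the forbidden $G_{r-s}^{2}$. The paper avoids this by iterating at the level of the stochastic integral itself, writing $D_{s,y}u_{n+1}(t,x)=\sum_{k=0}^n T_k^{(n)}$ as in \eqref{finiteit}, and only then estimating each $\|T_k^{(n)}\|_p$. For $k=1,2$ one applies BDG and then \eqref{Sobolev} \emph{directly to the product} $G_{t-r_1}(x-\cdot)G_{r_1-s}(\cdot-y)$, which yields $\big(\int G_{t-r_1}^{2q}G_{r_1-s}^{2q}\big)^{1/q}$ and is handled by Lemma~\ref{LEM2} with $\delta=1/q$. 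For $k\ge 3$ one uses Lemma~\ref{fori} only for the $k-1$ outer layers, obtaining a chain of $G^{2q}$ factors times $\|\widetilde{N}_{r_{k-1},z_{k-1}}\|_p^2$, and then treats the innermost single integral $\widetilde{N}$ by the $k=1$ argument, producing $G_{r_{k-1}-s}^{2-1/q}(z_{k-1}-y)$ rather than $G^2$. The remaining $G^{2q}$ convolutions are then reduced via Lemma~\ref{LEM2} with $\delta=1$ and Cauchy--Schwarz, and the factorial comes from the $(k-2)$-dimensional time simplex. Your outline needs this ``stop one step early and keep the two innermost kernels together under the $2q$-power'' manoeuvre; without it, no amount of work on $\mathcal{J}^k[G^2]$ can succeed.
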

 
 One key ingredient for proving Proposition \ref{PROP32} is  the following Lemma \ref{LEM2}, which is a consequence of the technical Lemma \ref{LEM1}. Both Lemma \ref{LEM1} and Lemma \ref{LEM2} will be proved in Section \ref{lemmas}.

    \begin{lemma}\label{LEM2} For $q\in(1/2,1)$, $\delta \in [1,1/q]$ and $s<t$,  we have
 \begin{align}
 K_{s,t}(z) := \int_s^t  dr  \big[  G_{t-r}^{2q} \ast G_{r-s}^{2q}(z)  \big]^{\delta}  \lesssim   (t-s)^{1-\delta(2q-1)}   G_{t-s}^{\delta(2q-1)}(z). \label{just0}
 \end{align}
  where the implicit constant only depends on $q$. 
     \end{lemma}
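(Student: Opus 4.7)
The plan is to insert Lemma~\ref{LEM1} into the integral defining $K_{s,t}(z)$ and carry out explicit one-dimensional integrations. By the symmetry $r\mapsto t+s-r$ (which swaps $t-r$ with $r-s$ and leaves the convolution invariant),
\[
K_{s,t}(z)=2\int_s^{(t+s)/2}\bigl[G_{t-r}^{2q}\ast G_{r-s}^{2q}(z)\bigr]^\delta\,dr;
\]
on this half $a:=t-r>b:=r-s$, so Lemma~\ref{LEM1} applies with $(t,s)\mapsto(a,b)$. When $w\geq t-s$ both sides of \eqref{just0} vanish by support considerations, so I may assume $w<t-s$. Writing the three summands on the right-hand side of Lemma~\ref{LEM1} as $T_1,T_2,T_3$ and using $(T_1+T_2+T_3)^\delta\leq 3^\delta(T_1^\delta+T_2^\delta+T_3^\delta)$ (valid because $\delta\geq 1$), it suffices to bound each $\int T_i^\delta\,dr$ separately.

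Using the factorizations $a^2-(b\mp w)^2=(t-s\mp w)(t+s\pm w-2r)$ and $(w+b)^2-a^2=(t-s+w)(2r-t-s+w)$, each $T_i^\delta$ splits as a $(t-s,w)$-dependent factor times a power of an affine function of $r$. Linear substitutions ($u=t+s+w-2r$ for $T_1$, $u=t+s-w-2r$ for $T_2$, $\rho=2r-t-s+w$ for $T_3$) reduce the three integrals to elementary power (or, for $T_3$, Beta-function) integrals, convergent because $\delta\leq 1/q$ and $q<1$ force $\delta(2q-1)<1$. The resulting bounds are
\[
\int T_1^\delta\,dr\lesssim (t-s-w)^{1+2\delta(1-2q)},\qquad \int T_2^\delta\,dr\lesssim (t-s+w)^{\delta(1-2q)}(t-s-w)^{1+\delta(1-2q)},
\]
and $\int T_3^\delta\,dr\lesssim w^{1+\delta(1-2q)}[(t-s)^2-w^2]^{\delta(1-2q)/2}$, with implicit constants depending only on $q$.

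Writing $\theta:=w/(t-s)\in(0,1)$, the target reads $(t-s)^{1+\delta(1-2q)}[(t-s)^2-w^2]^{\delta(1-2q)/2}$. Direct simplification gives: $T_2$-ratio $=(1+\theta)^{\delta(1-2q)/2}(1-\theta)^{1+\delta(1-2q)/2}$, monotonically decreasing on $[0,1)$ hence $\leq 1$; $T_3$-ratio $=\theta^{1+\delta(1-2q)}\leq 1$ (using $1+\delta(1-2q)>0$). The main obstacle is the $T_1$-ratio $(1-\theta)^{1+3\delta(1-2q)/2}(1+\theta)^{-\delta(1-2q)/2}$: the second factor is harmlessly bounded by $2^{1/2}$ since $|\delta(1-2q)/2|<1/2$, but the first exponent $1+3\delta(1-2q)/2$ can be negative when $\delta(2q-1)>2/3$ (which occurs for $q>3/4$), threatening blow-up as $\theta\to 1$. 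The resolution is structural: the indicator $\mathbf{1}_{\{w<b\}}$ inside $T_1$ forces $r>s+w$, so on the half $r<(t+s)/2$ the $T_1$-integration range $(s+w,(t+s)/2)$ is \emph{empty} whenever $w\geq(t-s)/2$. Hence $\theta\leq 1/2$ throughout the contributing region, and $(1-\theta)^{1+3\delta(1-2q)/2}\leq 2^{|1+3\delta(1-2q)/2|}$ is a constant depending only on $q$, completing the proof.
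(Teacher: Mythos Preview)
Your proof is correct and follows essentially the same route as the paper's: apply Lemma~\ref{LEM1}, use the factorizations $a^2-(b\mp w)^2=(t-s\mp w)(t+s\pm w-2r)$ to reduce each of the three terms to an elementary one-variable integral in $r$, and then compare the resulting bounds to the target. The paper organizes the computation via $\mu=(t-r)\wedge(r-s)$, $\nu=(t-r)\vee(r-s)$ and treats both halves of $[s,t]$, whereas you invoke the $r\mapsto t+s-r$ symmetry at the outset and parametrize the final comparison through $\theta=w/(t-s)$; your observation that the $T_1$ contribution is empty for $\theta\ge 1/2$ is exactly the paper's use of $\mathbf{1}_{\{w<(t-s)/2\}}$ to bound $(t-s-w)^{\delta(1-2q)}\lesssim(t-s)^{\delta(1-2q)}$ in the $K^{(1)}$ estimate.
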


 \begin{proof}[Proof of Proposition \ref{PROP32}] 
Fix  $(t,x) \in \R_+\times \R^2$ and $p\ge 2$. Let us first establish the following weaker estimate:
\begin{align}
&\text{$u_n(t,x)\in\mathbb{D}^{1,p}$ and   $\| D_{s,y} u_n(t,x) \|_p \leq C G_{t-s}(x-y)$,  }  \label{claim:sec42}
\end{align}
for  almost all  $(s,y)\in[0,t]\times\R^2$, where the constant $C$ may depend on $n$. It follows from \eqref{picar} that the claim \eqref{claim:sec42} holds true for $n=0,1$, because $D_{s,y} u_0(t,x)=0$ and $D_{s,y}u_1(t,x) =\sigma(1) G_{t-s}(x-y)$. Now suppose  the claim \eqref{claim:sec42} holds true for $n\geq 1$, then   taking the Malliavin derivative in both sides of equality \eqref{picar} and using the commutation relationship \eqref{COMM} and the chain rule \eqref{cr},  we obtain
 \begin{align*}
 D_{s,y} u_{n+1}(t,x) = G_{t-s}(x-y)  \sigma\big( u_n(s,y) \big) + \int_s^t\int_{\R^2} G_{t-r}(x-z)  \Sigma_{r,z}^{(n)} D_{s,y}u_n(r,z) W(dr,dz),
 \end{align*}
 where  $\big\{\Sigma_{s,y}^{(n)}: (s,y)\in\R_+\times\R^2\big\}$ is an adapted random field that is uniformly bounded by $L$, for each $n$.  We recall that the constant $L$ is the Lipschitz constant of the function $\sigma$ appearing in \eqref{2dSWE}. It follows that
 \begin{align*}
& \big\| D_{s,y} u_{n+1}(t,x)\big\|_p^2  \leq 2 \kappa_{p,t}^2 G^2_{t-s}(x-y)  +  8p \Bigg\|  \int_s^t\int_{\R^4} G_{t-r}(x-z) G_{t-r}(x-z')  \Sigma_{r,z}^{(n)}  \\
&\qquad\qquad\qquad \qquad \times D_{s,y}u_n(r,z) \Sigma_{r,z'}^{(n)} D_{s,y}u_n(r,z') \| z- z' \|^{-\beta} dzdz'dr \Bigg\|_{p/2}~\text{by BDG \eqref{eq2}} \\
&\leq 2 \kappa_{p,t}^2 G^2_{t-s}(x-y) + 8pL^2 C_n^2  \int_s^t \big\| G_{t-r}(x-\bullet)G_{r-s}(y-\bullet)\big\|_{\mathfrak{H}_0}^2
dr,
 \end{align*}
by applying  Minkowski's inequality and using the induction hypothesis,  where $ \kappa_{p,t}$ is defined in \eqref{sigmap}
and $\H_0$ has been introduced in \eqref{H_0}.
  Note that Lemma \ref{HLS} (see \eqref{Sobolev}) implies 
\[
\int_s^t  \big\| G_{t-r}(x-\bullet)G_{r-s}(y-\bullet)\big\|_{\mathfrak{H}_0}^2 \leq  C_\beta \int_s^t dr \big(  G^{2q}_{t-r}\ast G^{2q}_{r-s}\big)^{1/q}(x-y) \leq C^\ast_\beta t^{\frac{1}{q}-1} G_{t-s}^{2-\frac1q}(x-y),
\]
where the last inequality follows from Lemma \ref{LEM2} with $\delta= 1/q$ and $C_\beta^\ast$ is a constant that only depends on  $\beta$. Finally, using 
\begin{equation} \label{EC1}
  G_{t-s}^{2-\frac 1q}(x-y) \leq  \big[2\pi (t-s)\big]^{\frac 1q} G^2_{t-s}(x-y),
 \end{equation}  
  we get  $\| D_{s,y} u_{n+1}(t,x) \|_p \leq C_{n+1} G_{t-s}(x-y)$ with $C_{n+1} = \sqrt{ 2 \kappa_{p,t}^2 +  pL^2 C_n^2 C^\ast_\beta t^{\frac{2}{q}-1}   } $
  and thus by routine  computations, we can show $u_{n+1}(t,x)\in\mathbb{D}^{1,p}$; see also \textbf{Step 2}. This shows \eqref{claim:sec42} for each $n$.
   Moreover,  we point out   that $D_{s,y} u_{n+1}(t,x)  =0$ if $s\geq t$.
 
 To obtain the uniform estimate in \eqref{Conq1}, we  proceed with  the   finite  iterations  
  \begin{align}
   & D_{s,y} u_{n+1}(t,x)  = G_{t-s}(x-y)  \sigma\big( u_n(s,y) \big)\notag \\
   & \quad +  \int_s^t\int_{\R^2} G_{t-r_1}(x-z_1) \Sigma^{(n)}_{r_1,z_1} G_{r_1 - s}(z_1-y)  \sigma(  u_{n-1}(s,y) ) W(dr_1,dz_1) \notag \\
   &\qquad +   \sum_{k=2}^n   \int_s^t\cdots\int_s^{r_{k-1}} \int_{\R^{2k}}G_{r_k -s}(z_k-y) \sigma\big( u_{n-k}(s,y) \big) \notag \\
   &\qquad\qquad  \times \prod_{j=1}^{k} G_{r_{j-1} - r_{j}}(z_{j-1} - z_{j}) \Sigma^{(n+1-j)}_{r_j,z_j}  W(dr_j,dz_j)=:   \sum_{k=0}^n T^{(n)}_k,  \label{finiteit} 
  \end{align}   
 where $T^{(n)}_k$ denotes the $k$th item  in the sum and   $r_0=t, z_0=x$. For example,  $T_0^{(n)}=G_{t-s}(x-y)  \sigma\big( u_n(s,y) \big)$ and 
 $$
 T^{(n)}_1 = \int_s^t\int_{\R^2} G_{t-r_1}(x-z_1) \Sigma^{(n)}_{r_1,z_1} G_{r_1 - s}(z_1-y)  \sigma(  u_{n-1}(s,y) ) W(dr_1,dz_1).
 $$
We are going to estimate the $p$-norm of each of  term $T^{(n)}_k$ for $k=0, \dots, n$.
 
 \medskip
 \noindent
 {\it Case $k=0$}:  It is clear that
 \begin{equation} \label{k=0}
 \| T^{(n)}_0\|_p  \le  \kappa_{p,t} G_{t-s}(x-y),
 \end{equation}
 where $\kappa_{p,t}$ is the constant defined in \eqref{sigmap}.  
  
  \medskip
 \noindent
 {\it Case $k=1$}:  Applying 
  \eqref{eq2}, Minkowski's inequality and \eqref{Sobolev}, we can write
 \begin{align*}
&\| T^{(n)}_1\|_p^2   \leq 4p \Bigg\|  \int_s^t\int_{\R^4} G_{t-r_1}(x-z_1)G_{t-r_1}(x-z_1') G_{r_1 - s}(z_1-y) G_{r_1 - s}(z_1'-y)\\
&\qquad\qquad  \times \|z_1-z_1'\|^{-\beta}  
\Sigma^{(n)}_{r_1,z_1}  \Sigma^{(n)}_{r'_1,z'_1}  \sigma^2(  u_{n-1}(s,y) )  dz_1 dz_1' dr_1  \Bigg\|_{p/2} \\
&\le  4p L^2\kappa_{p,t}^2    \int_s^t \big\| G_{t-r_1}(x-\bullet)    G_{r_1 - s}(y-\bullet) \big\|_{\mathfrak{H}_0}^2  dr_1 \quad \text{with $\mathfrak{H}_0$ introduced in \eqref{H_0}}  \\
&\le 4pL^2\kappa_{p,t}^2  C_\beta  \int_s^t \left(\int_{\R^2} G_{t-r_1}^{2q}(x-z_1)  G_{r_1 - s}^{2q}(z_1-y)  dz_1 \right)^{1/q}dr_1, 
  \end{align*}
  with $q= 2/(4-\beta)$.  
  Then,
   we
    can
     deduce
      immediately  from Lemma \ref{LEM2} (with $\delta =1/q$)  that 
\begin{equation}   \label{k3c}
\| T^{(n)}_1\|_p^2  \le  pL^2\kappa_{p,t}^2  C^*_\beta  t^{\frac 1q-1} G_{t-s}^{2-\frac 1q}(x-y),
\end{equation}
 for some  generic constant  $C^*_\beta$, which only depends on $\beta$.
 Taking \eqref{EC1} into account, 
 we obtain
 \begin{equation}  \label{k=1}
    \| T^{(n)}_1\|_p \le   \sqrt{p} L\kappa_{p,t}  C^*_\beta  t^{\frac 1q- \frac 12} G_{t-s}(x-y). 
  \end{equation} 
 
    \medskip
    
 \noindent
 {\it Case} $k=2$:
  We can write 
  \[
T^{(n)}_2 =   \int_s^t\int_{\R^2} W(dr_1, dz_1) G_{t-r_1}(x-z_1) \Sigma^{(n)}_{r_1,z_1} N_{r_1,z_1}
  \]
  with $N_{r_1,z_1}$ defined to be 
  \begin{align*}
  N_{r_1,z_1}=
 \int_s^{r_1}  \int_{\R^{2}}G_{r_2 -s}(z_2-y) \sigma\big( u_{n-2}(r_2,z_2) \big)        G_{r_{1} - r_{2}}(z_{1} - z_{2}) \Sigma^{(n-1)}_{r_2,z_2}  W(dr_2,dz_2),
  \end{align*}
which is clearly $\mathscr{F}_{r_1}$-measurable.  
  Applying  again
  \eqref{eq2}, Minkowski's inequality and \eqref{Sobolev}, we can bound $\| T^{(n)}_2\|_p^2 $ by 
 \begin{align}  \notag
 & 4p \Bigg\|  \int_s^t\int_{\R^4} G_{t-r_1}(x-z_1)G_{t-r_1}(x-z_1')   \|z_1-z_1'\|^{-\beta}  
\Sigma^{(n)}_{r_1,z_1}  \Sigma^{(n)}_{r'_1,z'_1}   N_{r_1,z_1} N_{r_1,z'_1} dz_1 dz_1' dr_1  \Bigg\|_{p/2} \\ \notag
&\le  4p L^2   \int_s^t\int_{\R^4} G_{t-r_1}(x-z_1)G_{t-r_1}(x-z_1') G_{r_1 - s}(z_1-y) G_{r_1 - s}(z_1'-y) \\  \notag
& \qquad \times  \| N_{r_1,z_1} \|_p \| N_{r_1,z'_1}\|_p \|z_1-z_1'\|^{-\beta}   dz_1 dz_1'  dr_1 \notag  \\
&\le 4pL^2   C_\beta  \int_s^t \left(\int_{\R^2} G_{t-r_1}^{2q}(x-z_1) \| N_{r_1,z_1} \|_p^{2q}  dz_1 \right)^{1/q}dr_1.   \label{k2b}
  \end{align}
The same arguments used to obtain the bound  \eqref{k=1} for  $\|  T_1^{(n)} \| _p$ yield
\begin{equation} \label{k2a}
\| N_{r_1,z_1} \|_p  \le  \sqrt{p} L\kappa_{p,t}  C^*_\beta  t^{\frac 1q -\frac 12} G_{r_1-s}(z_1-y). 
 \end{equation}
 Substituting \eqref{k2a} into   \eqref{k2b} and applying Lemma \ref{LEM2} with $\delta =1/q$,   
  we obtain
 \begin{align*}
 \| T^{(n)}_2\|_p^2   &  \le  4pL^2   C_\beta  (\sqrt{p} L\kappa_{p,t}  C^*_\beta  t^{\frac 1q -\frac 12})^2 \int_s^t \left(\int_{\R^2} G_{t-r_1}^{2q}(x-z_1) G^{2q} _{r_1-s}(z_1-y)  dz_1 \right)^{1/q}dr_1 \\
&  \le  p^2 L^4 \kappa^2_{p,t}  C^*_\beta     t^{\frac 3q -2} G_{t-s} ^{2-\frac 1q} (x-y),
 \end{align*}
 which implies
 \begin{equation}
  \| T^{(n)}_2\|_ p \le  p L ^2\kappa_{p,t}  C^*_\beta     t^{\frac  3{2q} -1} G_{t-s} ^{1-\frac 1{2q}} (x-y).
 \end{equation}
 In view of  \eqref{EC1}, we obtain
  \begin{equation}
  \| T^{(n)}_2\|_ p \le  p L ^2\kappa_{p,t}  C^*_\beta     t^{\frac  2q -1} G_{t-s} (x-y).
 \end{equation}

   \medskip
 \noindent
 {\it Case} $3\le k\le n$:  The strategy to handle these cases will be slightly different. We need to get rid of the power  $\frac 1q$ in order to iterate the integrals in the time variables and obtain a summable series.
 We can write 
  \[
T^{(n)}_k =   \int_s^t\int_{\R^2} W(dr_1, dz_1) G_{t-r_1}(x-z_1) \Sigma^{(n)}_{r_1,z_1} \widehat{N}_{r_1,z_1}
  \]
  with $\widehat{N}_{r_1,z_1}$ defined to be 
  \begin{align*}
  \widehat{N}_{r_1,z_1}&=
 \int_{ s<r_k < \cdots <r_2<r_1} \int_{\R^{2k-2}}G_{r_k -s}(z_k-y) \sigma\big( u_{n-k}(s,y) \big)  \\
 & \qquad \times     \prod_{j=2}^{k} G_{r_{j-1} - r_{j}}(z_{j-1} - z_{j}) \Sigma^{(n+1-j)}_{r_j,z_j}  W(dr_j,dz_j),
  \end{align*}
which is   $\mathscr{F}_{r_1}$-measurable.     Then, 
we deduce from 
\eqref{eq2} and  \eqref{RHS1}  
that
  \begin{align*}
    \big\| T^{(n)}_k \big\|_p^2 &  \le 4p   \Bigg\|  \int_s^t dr_1      \int_{\R^4} G_{t-r_1}(x-z_1) \Sigma^{(n)}_{r_1,z_1} \widehat{N}_{r_1,z_1}G_{t-r_1}(x-z'_1) \Sigma^{(n)}_{r_1,z'_1} \widehat{N}_{r_1,z'_1}   \\
     &\qquad\qquad \times  \| z_1' - z_1\|^{-\beta}  dz_1dz_1'     \Bigg\|_{p/2} \\
     & \le  4p K_\beta L^2  t^{\frac{(2-2q)^2}{2q}}    \int_s^t dr_1   \int_{\R^2}dz_1 G^{2q}_{t-r_1}(x-z_1) \|   \widehat{N}_{r_1,z_1} \|_p^2.
  \end{align*}
   Now we can iterate the above process to obtain
      \begin{align}  \notag
 \big\| T^{(n)}_k  \big\|_p^2  &\leq   \left( 4p L^2  K_\beta t^{\frac{(2-2q)^2}{2q}} \right)^{k-1}  \int_s^tdr_1 \int_s^{r_1}dr_2\cdots \int_s^{r_{k-2}} dr_{k-1}  \int_{\R^{2k-2}}dz_1\cdots dz_{k-1}   \\  
 &  \times      G^{2q}_{t-r_1}(x-z_1)  G^{2q}_{r_1-r_2}(z_1-z_2) \cdots   G^{2q}_{r_{k-2}-r_{k-1}}(z_{k-2}-z_{k-1})     \big\|   \widetilde{N}_{r_{k-1},z_{k-1}}      \big\|_{p}^2,\label{eq8}
  \end{align}
  where    $ \widetilde{N}_{r_{k-1},z_{k-1}} $ is defined to be   
  \begin{align*}
  \int_s ^{r_{k-1} } \int_{\R^2}  W(dr_k, dz_k) \sigma\big( u_{n-k}(s,y)  \big)G_{r_{k-1}-r_k}(z_{k-1}-z_k) \Sigma^{(n+1-k)}_{r_k,z_k} G_{r_k-s}(z_k-y).
   \end{align*}
Therefore, the same arguments for estimating $\| T_1^{(n)}\|_p^2$  (see  \eqref{k3c}),  lead to 
\begin{equation} \label{eq7}
 \big\|   \widetilde{N}_{r_{k-1},z_{k-1}}      \big\|_{p}^2 \le  
 p\kappa_{p,t}^2  L^2 C^*_\beta  t^{\frac  1q-1}  G^{2-\frac 1q}_{r_{k-1} -s}\big(z _{k-1} - y),
 \end{equation}
 with  $C^*_\beta$ being a generic constant  that only depends on $\beta$.
 On the other hand, applying Lemma  \ref{LEM2} with $\delta =1$, we can write
 \begin{align}  \notag
& \int_{r_{k-1}}^{r_{k-3}} dr_{k-2}  \int_{\R^2}  dz_{k-2}G^{2q}_{r_{k-3} - r_{k-2}} (z_{k-3} - z_{k-2})  G^{2q}_{r_{k-2} - r_{k-1}} (z_{k-2} - z_{k-1}) \\  
&\qquad  \lesssim  t^{2-2q} G^{2q-1}_{r_{k-3} -r_{k-1}}( z_{k-3} -z_{k-1})  \label{k3d},
 \end{align}
 with the convention $z_0=x$ and $r_0=t$.   Plugging the estimates \eqref{eq7} and \eqref{k3d} into  \eqref{eq8}, yields
    \begin{align*}  
 \big\| T^{(n)}_k  \big\|_p^2  &\leq   \kappa_{p,t}^2  \left( p L^2  C^*_\beta t^{\frac {2(1-q)^2}q} \right)^k t^{5-4q-\frac1q} \\
 & \quad \times  \int_s^tdr_1 \int_s^{r_1}dr_2\cdots \int_s^{r_{k-3}} dr_{k-1}  \int_{\R^{2k-4}}dz_1\cdots  dz_{k-3} dz_{k-1}     \\
 & \quad \times   G^{2q}_{t-r_1}(x-z_1)   \cdots   G^{2q}_{r_{k-4}-r_{k-3}}(z_{k-4}-z_{k-3})  \\
 & \quad \times  G^{2q-1}_{r_{k-3}-r_{k-1}}(z_{k-3}-z_{k-1})      
  G^{2-\frac 1q}_{r_{k-1} -s}\big(z _{k-1} - y)
  \end{align*}
 By Cauchy-Schwartz inequality and \eqref{fact01},
 \begin{align*}
 & \int_{\R^2} G^{2q-1}_{r_{k-3}-r_{k-1}}(z_{k-3}-z_{k-1})      
  G^{2-\frac 1q}_{r_{k-1} -s}\big(z _{k-1} - y) dz_{k-1} \\
  & \qquad 
  \le  \left[ \int_{\R^2} G^{4q-2}_{r_{k-3}-r_{k-1}}(z)   dz
  \int_{\R^2}G^{4-\frac 2q}_{r_{k-1} -s}(z)  dz \right]^{1/2}  \le  C^*_\beta  t^{1-2q+\frac1q}.
  \end{align*}
In this way, we obtain
  \begin{align}    
 \big\| T^{(n)}_k  \big\|_p^2  &\leq   \kappa_{p,t}^2  ( p L^2  C^*_\beta t^{\frac {2(1-q)^2}q} )^k t^{6(1-q)}  
 {\bf 1}_{\{ \|x -y\| < t -s   \}}    \label{ind-why}  \\   \notag
&  \qquad\times  \int_s^tdr_1 \int_s^{r_1}dr_2\cdots \int_s^{r_{k-3}}  dr_{k-1}  \int_{\R^{2k-6}}dz_1\cdots  dz_{k-3}      \\
 & \qquad \times   G^{2q}_{t-r_1}(x-z_1)   \cdots   G^{2q}_{r_{k-4}-r_{k-3}}(z_{k-4}-z_{k-3})    \notag
  \end{align}
 The indicator function $ {\bf 1}_{\{ \|x -y\| < t -s   \}}$ appears in \eqref{ind-why}, because   
 \begin{center}
$   \mathbf{1}_{ \{  \|z_{k-1} -y\| < r_{k-1}-s, \|z_{k-3} -z_{k-1}\| < r_{k-3}- r_{k-1}, \ldots, \|x-z_1\|< t-r_1   \}  }  \leq  {\bf 1}_{\{ \|x -y\| < t -s   \}}$.
 \end{center}

 Now,  we can perform the integration with respect to $dz_{k-3}, \dots, dz_1$ one by one to get
 \begin{align*}
 & \int_{\R^{2k-6}}   dz_1\cdots dz_{k-3}     G^{2q}_{t-r_1}(x-z_1)  G^{2q}_{r_1-r_2}(z_1-z_2) \cdots   G^{2q}_{r_{k-4}-r_{k-3}}(z_{k-4}-z_{k-3}) \\
 &\qquad =\left( \frac{(2\pi)^{1-2q}  }{2-2q} \right)^{k-3} \times \prod_{j=1}^{k-3} \big( r_{j-1}  - r_j\big)^{2-2q} \leq \left( \frac{(2\pi)^{1-2q}  }{2-2q} t^{2-2q} \right)^{k-3},
 \end{align*}
 in view of the equality \eqref{fact01}.     Together with the integration on the simplex $\{ t> r_1 >  \cdots > r_{k-3} >r_{k-1}>s\}$, we get
 \begin{equation*}  
  \big\| T^{(n)}_k  \big\|_p^2 \leq  \frac{ (pC_\beta^* L^2)^{k}    }{(k-2)!}  \kappa_{p,t}^2
   t^{  k(  \frac 2q -1)-2}  {\bf 1}_{\{ \|x -y\| < t -s   \}}.
  \end{equation*}
 Thus, taking into account that  
  $$
   {\bf 1}_{\{ \|x -y\| < t -s   \}} \leq \big[2\pi (t-s)\big]^2 G^2_{t-s}(x-y), 
   $$
  we obtain for $k\in\{3,\dots,n\}$,
  \begin{align}\label{kle3}
   \big\| T^{(n)}_k \big\|_p &\leq  \kappa_{p,t} \frac{ (p C^\ast_\beta L^2)^{k/2}  }{ \sqrt{(k-2)!}}  t^{\ k(\frac 1q -\frac 12)} G_{t-s}(x-y). 
  \end{align}
  Hence, we deduce from \eqref{k=0}, \eqref{k=1} and \eqref{kle3} that for any $n\geq 3$, 
\[
  \big\| D_{s,y} u_{n+1}(t,x) \big\|_p \leq \sum_{k=0}^n \big\| T^{(n)}_k  \big\|_p  
\le C_{\beta,p, t, L}  \kappa_{p,t} G_{t-s} (x-y),
\]
where the constant  $C_{\beta,p, t, L}$  is defined in \eqref{Cbeta}. This proves Proposition  \ref{PROP32}. \qedhere
 \end{proof}

 \medskip
 
 \noindent
 {\bf Step 2.}  We are going to show that $D_{s,y} u(t,x)$ is a real-valued random variable.
 As a consequence of \eqref{Sobolev}, \eqref{Conq1} and  \eqref{fact01}, we have for any $p\geq 2$ and with $q=2 /(4-\beta)$
 \begin{align*}
 & \E\Big[ \| Du_{n+1}(t,x) \| _{\mathfrak{H}}^p \Big] ^{2/p}= \left\| \int_{\R_+} ds \big\| D_{s,\bullet} u_{n+1}(t,x)\big\|^2_{\mathfrak{H}_0} \right\|_{p/2}\\
 & \lesssim  \left\| \int_{\R_+} ds \left( \int_{\R^2} \vert D_{s,y} u_{n+1}(t,x)\vert^{2q} dy \right)^{1/q} \right\|_{p/2} \\
 &\lesssim \int_{\R_+} ds \left( \int_{\R^2} \big\| D_{s,y} u_{n+1}(t,x)\big\|^{2q}_p dy \right)^{1/q}\text{\small by applying Minkowski twice}\\
 &\lesssim \int_{\R_+} ds \left( \int_{\R^2} G_{t-s}^{2q}(x-y) dy \right)^{1/q} \lesssim  \int_{0}^t (t-s)^{\frac{2-2q}{q} }ds \lesssim 1.
 \end{align*}
 One can first read from the above estimates that  $\{Du_{n+1}(t,x),  n\geq 1\} $ is uniformly bounded in   $L^p\big(\Omega; \mathfrak{H}   \big)$, which together with the $L^p$-convergence of $u_n(t,x)$ to $u(t,x)$ implies the   convergence of $Du_{n+1}(t,x)$ to $Du(t,x)$ in the weak topology on $L^p\big(\Omega; \mathfrak{H}   \big)$ up to a subsequence; this fact is well-known in the literature, see for instance \cite{MilletMarta}. One can deduce from the same arguments that $\{ Du_{n+1}(t,x), n\geq 1\} $ is uniformly bounded in   $L^p\big(\Omega; L^{2q}(\R_+\times\R^2)   \big)$:
 \begin{align*}
&\big\|  Du_{n+1}(t,x) \big\|_{L^p \left(\Omega; L^{2q}(\R_+\times\R^2)   \right)}^p  =    \left\| \int_{\R_+\times\R^2} \vert D_{s,y} u_{n+1}(t,x)\vert^{2q} dy ds  \right\|_{\frac{p}{2q}}^{\frac{p}{2q}}   \\
&\qquad \leq  \left( \int_{\R_+\times\R^2} \big\| D_{s,y} u_{n+1}(t,x)\big\|^{2q}_p dy ds \right)^{\frac{p}{2q}} \lesssim  \left( \int_{\R_+\times\R^2}G^{2q}_{t-s}(x-y) dy ds \right)^{\frac{p}{2q}} \lesssim 1.
 \end{align*} 
So up to a subsequence, $Du_{n}(t,x)$ also converges to $Du(t,x)$ in the weak topology on $L^p\big(\Omega;  L^{2q}(\R_+\times\R^2)   \big)$.  In particular, 
 we have ($2q< 2 \leq p <\infty$)
 \begin{align}\label{final1}
 \sup_{(t,x)\in[0,T]\times\R^2}  \left\|   \int_{\R_+\times\R^2} \vert D_{s,y} u(t,x)\vert^{2q} dy ds    \right\|_{\frac{p}{2q}} < +\infty
 \end{align}
 and $D_{s,y}u(t,x)$ is a real function in $(s,y)$.

 \medskip
 \noindent
 {\bf Step 3.}  Let us prove the lower bound. 
 By Lemma \ref{CO}, we can write
\[
u(t,x) -1 = \int_0^t\int_{\R^2} \E\big[ D_{s,y}u(t,x) \vert \F_s \big] W(ds,dy),
\]
so that a comparison with \eqref{mild} yields $\E\big[ D_{s,y}u(t,x) \vert \F_s \big] = G_{t-s}(x-y) \sigma(u(s,y))$ almost everywhere in $\Omega \times \R_+ \times \R^2$.  It follows that
\[
\big\| \E [ D_{s,y}u(t,x) \vert \F_s  ] \big\|_p = G_{t-s}(x-y) \big\|\sigma(u_{s,y})\big\|_p, 
\]
thus by conditional Jensen, we have
\[
\big\|   D_{s,y}u(t,x)   \big\|_p \geq G_{t-s}(x-y) \big\|\sigma(u_{s,y})\big\|_p,
\]
which is exactly the lower bound in   \eqref{IMP}.

 \medskip
 \noindent
  {\bf Step 4.} 
We are finally in a position to prove the upper bound in  \eqref{IMP}.  
  Put $p^\star = p/(p-1)$, which is the conjugate exponent for $p$.  
Let us pick a nonnegative function $M\in C_c(\R_+\times\R^2)$  and 
random variable  $\mathcal{Z}\in L^{p^\star}(\Omega)$ with $\| \mathcal{Z}\|_{p^\star} \leq 1$.
Since $Du_{n}(t,x)$   converges to $Du(t,x)$ in the weak topology on $L^p\big(\Omega;  L^{2q}(\R_+\times\R^2)   \big)$ along some subsequence (say $Du_{n_k}(t,x)$), we have, in view of \eqref{Conq1}
\begin{align*}
 & \int_{\R_+\times \R^2} M(s,y) \E\big[  Z D_{s,y} u(t,x) \big] dsdy =
\lim_{k\rightarrow \infty} \int_{\R_+\times \R^2} M(s,y) \E\big[  Z D_{s,y} u_{n_k}(t,x) \big] dsdy \\
& \qquad \le C_{\beta,p, t, L}  \kappa_{p,t} \int_{\R_+\times \R^2} M(s,y) G_{t-s}(x-y) ds dy.
\end{align*}
This implies that for almost all $(s,y)\in [0,t\times \R^2$,
\[
 \E\big[  Z D_{s,y} u(t,x) \big] \le  C_{\beta,p, t, L}  \kappa_{p,t} G_{t-s}(x-y)
\]
Taking the supremum over $\{ \mathcal{Z}: \| \mathcal{Z}\|_{p^\star} \leq 1  \}$  yields
\[
 \| D_{s,y} u(t,x) \|_p  \le  C_{\beta,p, t, L}  \kappa_{p,t} G_{t-s}(x-y),
\]
which finishes the proof.
 \qedhere

\subsection{Proof of technical lemmas} \label{lemmas}  For   convenience,  let us recall Lemma \ref{LEM1} below.

 \quad\\
\noindent{\bf Lemma \ref{LEM1}}.  For  $t>s$, with $\| z\|= \w >0$ and $q\in(1/2,1)$
\begin{align*}
\qquad G_t^{2q}\ast G_s^{2q}(z) & \lesssim   {\bf 1}_{\{ \w < s\}}  \big[ t^2 - (s-\w)^2\big]^{1-2q} +       \big[ t^2 - (s+\w)^2 \big]^{1-2q } {\bf 1}_{\{ t> s+\w  \}} \notag \\
 &\quad +  {\bf 1}_{\{\vert s-\w \vert < t < s+\w  \}} \big[ (\w+s)^2 -t^2\big]^{-q+\frac12} \big[t^2- (s-\w)^2\big]^{-q+\frac12},\quad\qquad \eqref{qq}  
 \end{align*}
  where the implicit constant depends only on $q$.

\begin{proof}[Proof of Lemma \ref{LEM1}]   We are interested in estimating 
\[
\mathbf{I}= \int_{\R^2} \big(t^2- \|x\|^2\big)_+^{-q} \big(s^2-\|x-z\|^2\big)_+^{-q} dx,
\]
where  $(v)_+^{-q}= v^{-q}$ for $v>0$ and $(v)_+^{-q}=0$ for $v\leq 0$.   Because the convolution of two radial functions is radial, the quantity $\mathbf{I}$ depends only on $s$, $t$ and $\|z\|$. Hence,  we can assume additionally  that $z=(\w,0)$, where $\w>0$.  Note that the integral  $\mathbf{I}$  vanishes if $t+s {\color{blue}<} \w$ and we can write, putting $x=(\xi, \eta)$,
\[
\mathbf{I}=  \int_{\R^2} \big(t^2- \xi^2-\eta^2\big)_+^{-q} \big(s^2-(\xi-\w)^2-\eta^2\big)_+^{-q} d\xi d\eta.
\]
Making the change of variables
$
(x,y)=  \big( \xi^2+ \eta^2,  (\w-\xi)^2+ \eta^2\big)
$
yields
\begin{equation} \label{I}
\mathbf{I}=\frac{1}{2} \int_D  (t^2-x)^{-q} (s^2-y)^{-q}  \big[ ( \sqrt{x} +\w)^2 -y\big]^{-  1/2} \big[ y-( \sqrt{x}-\w)^2\big]^{-  1/2} dxdy,
\end{equation}
where 
\[
D =\left\{ (x,y)\in\R^2: 0< x <t^2, 0 < y < s^2, \big(\sqrt{x}-\w\big)^2 <y< \big(\sqrt{x}+\w\big)^2\right\}.
\]
To derive the expression \eqref{I}  for $\mathbf{I}$, we have used the fact that the Jacobian of the change of variables is
\[
\left| \frac { \partial (x,y)}{\partial (\xi, \eta)} \right| =
4 \w  |\eta| = 2 \big[ ( \sqrt{x} +\w)^2 -y\big]^{ 1/2} \big[ y-( \sqrt{x}-\w)^2\big]^{  1/2}.
\]
Then, integrating first in the variable $y$ yields
\begin{align}
\mathbf{I}&=  \frac{1}{2} \int_0^{t^2} dx (t^2-x)^{-q} \int_{D(x)} dy ~   (s^2-y)^{-q}  \big[ ( \sqrt{x} +\w)^2 -y\big]^{-  1/2} \big[ y-( \sqrt{x}-\w)^2\big]^{-  1/2}\notag \\
&=:\frac{1}{2}\int_0^{t^2} (t^2-x)^{-q} \mathcal{S}_q(x)dx,\notag  
\end{align}
where 
 \[
D(x)=\big\{ y\in\R: (x,y)\in D\big\} = \Big\{ y\in\R: y <s^2 , \big(\sqrt{x}-\w\big)^2 <y< \big(\sqrt{x}+\w\big)^2\Big\}
\]
and
\begin{equation}
\mathcal{S}_q(x) =\int_{D(x)} dy ~   (s^2-y)^{-q}  \big[ ( \sqrt{x} +\w)^2 -y\big]^{-  1/2} \big[ y-( \sqrt{x}-\w)^2\big]^{-  1/2}.\label{Sq} 
\end{equation}

Let us first deal with $\mathcal{S}_q(x)$ for every $x\in(0, t^2)$. There are two possible cases, depending on the value of $x$:

\medskip
\noindent
{\bf   (A)} When $(\sqrt{x} -\w)^2 <s^2 < (\sqrt{x}+\w)^2$,  
\begin{align}  \notag
\mathcal{S}_q(x)=& \int_{(\sqrt{x} -\w)^2} ^{s^2} (s^2-y)^{-q}  \big[( \sqrt{x} +\w)^2 -y\big]^{-  1/2} \big[y-( \sqrt{x}-\w)^2\big]^{-  1/2} dy \\  \notag
&\leq {\rm Beta}(1/2, 1-q)  \big[ ( \sqrt{x} +\w)^2 -s^2)\big]^{-  1/2} \big[ s^2-( \sqrt{x}-\w)^2\big]^{-q+\frac 12}\\  \label{A}
&\lesssim  \big[ ( \sqrt{x} +\w)^2 -s^2)\big]^{-  1/2} \big[ s^2-( \sqrt{x}-\w)^2\big]^{-q+\frac 12}.
\end{align}
 Throughout this section,  ${\rm Beta}(a,b )$ denotes the usual beta function:
\[
{\rm Beta}(a,b ) = \int_0^1 x^{a-1} (1-x)^{b-1} dx, ~ a,b\in(0,\infty).
\]

\medskip
\noindent
{\bf   (B)} When $(\sqrt{x} -\w)^2 < (\sqrt{x}+\w)^2<s^2$,
\begin{align*}
\mathcal{S}_q(x)&= \int_{(\sqrt{x} -\w)^2} ^{(\sqrt{x}+\w)^2}  (s^2-y)^{-q}  \big[( \sqrt{x} +\w)^2 -y\big]^{-  1/2} \big[y-( \sqrt{x}-\w)^2\big]^{-  1/2} dy   \\
&\leq (s^2-( \sqrt{x} +\w)^2 )^{-q}   \int_{(\sqrt{x} -\w)^2} ^{(\sqrt{x}+\w)^2} \big[( \sqrt{x} +\w)^2 -y\big]^{-  1/2} \big[y-( \sqrt{x}-\w)^2\big]^{-  1/2} dy\\
& = {\rm Beta}(1/2, 1/2)   \big[s^2-( \sqrt{x} +\w)^2 \big]^{-q}   \lesssim  \big[s^2-( \sqrt{x} +\w)^2 \big]^{-q}.
\end{align*}

\medskip
Note that three positive numbers $a,b,c$ can form sides of a triangle if and only if {\it the sum of any two of them is strictly bigger than the third one}, which is equivalent to saying that $\vert a-b\vert < c <a+b$. It follows that
\begin{align*}
(\sqrt{x} -\w)^2 <s^2 < (\sqrt{x}+\w)^2 &\Leftrightarrow  \text{$\sqrt{x}, \w, s$ can be the sides of a triangle} \\
&\Leftrightarrow   (s-\w)^2< x < (s+\w)^2.
\end{align*}
Furthermore, it is trivial that $(\sqrt{x} -\w)^2 < (\sqrt{x}+\w)^2<s^2 \Leftrightarrow  x< (s-\w)^2~ \text{and} ~s>\w$.

Now we decompose  the integral $2\mathbf{I} = \int_0^{t^2} (t^2-x)^{-q} \mathcal{S}_q(x)dx$ into two parts corresponding to    the cases {\bf   (A)}  and {\bf   (B)}:
\[
2\mathbf{I}  = \mathbf{I}_{\bf A} + \mathbf{I}_{\bf B},
\]
where 
\begin{align*}
\mathbf{I}_{\bf A}= \int_{(s-\w)^2}^{t^2\wedge  (s+\w)^2 }  (t^2-x)^{-q} \mathcal{S}_q(x)dx
\quad {\rm and} \quad 
\mathbf{I}_{\bf B}= \int_0^{(s-\w)^2\wedge t^2} (t^2-x)^{-q} \mathcal{S}_q(x)dx.
\end{align*}

\medskip
\noindent
{\bf Estimation of $\mathbf{I}_{\bf A}$.}   
We first write, using \eqref{A}, 
\begin{align*}
\mathbf{I}_{\bf A}& \lesssim \int_{(s-\w)^2}^{t^2\wedge  (s+\w)^2 } (t^2-x)^{-q}  \big[ ( \sqrt{x} +\w)^2 -s^2)\big]^{-  1/2} \big[ s^2-( \sqrt{x}-\w)^2\big]^{-q+\frac 12} dx\\
& = \int_{(s-\w)^2}^{t^2\wedge  (s+\w)^2 } (t^2-x)^{-q}        \big[ (\w+s)^2-x\big]^{-q+\frac 12}  \big[ x- (\w-s)^2\big]^{-q+\frac 12}  \big[ (\sqrt{x} +\w)^2-s^2\big]^{q-1}dx.
\end{align*}
Recall in this case $\sqrt{x} +\w> s$,  which implies $(\sqrt{x} +\w)^2-s^2 > x- (s-\w)^2>0$. Therefore,
\begin{align*}  
\mathbf{I}_{\bf A}& \lesssim  \int_{(s-\w)^2}^{t^2\wedge  (s+\w)^2 } (t^2-x)^{-q}        \big[ (\w+s)^2-x\big]^{-q+\frac 12}  \big[ x- (\w-s)^2\big]^{-1/2}  dx.
\end{align*}
Now we consider the following two sub-cases:
 \begin{itemize}
\item[\bf (A1)] If   $s+\w<t$, then  for $ (s-\w)^2 < x < (s+\w)^2 <t$, we have, with $\gamma = 2- q^{-1}$, 
 \begin{align*}
(t^2-x)^{-q}     &\leq   \big[ t^2 - (s+\w)^2 \big]^{-q\gamma} \big[ (s+\w)^2 -x \big]^{-q+q\gamma} \\
&= \big[ t^2 - (s+\w)^2 \big]^{1-2q} \big[ (s+\w)^2 -x \big]^{q-1}.
\end{align*}
 Thus, 
 \begin{align*}
\mathbf{I}_{\bf A}& \lesssim  \big[ t^2 - (s+\w)^2 \big]^{1-2q }  \int_{(s-\w)^2}^{ (s+\w)^2 }       \big[ (\w+s)^2-x\big]^{-1/2}  \big[ x- (\w-s)^2\big]^{-1/2}  dx \\
&=  {\rm Beta}(1/2, 1/2)  \big[ t^2 - (s+\w)^2 \big]^{1-2q }.
\end{align*}
  
  \medskip
  
\item[\bf (A2)] If  $(s-\w)^2 <t^2< (s+\w)^2$ (\emph{i.e.} $s,\w,t$ form triangle sides), then  
 \begin{align*}
\mathbf{I}_{\bf A} \lesssim  & \int_{(s-\w)^2} ^{t^2}  (t^2-x)^{-q}  \big[ (\w+s)^2-x\big]^{-q+\frac 12}  \big[x- (\w-s)^2\big]^{-  1/2}dx \\
 \leq &   \big[ (\w+s)^2 -t^2\big]^{-q+\frac12}  \int_{(s-\w)^2} ^{t^2}       \big(  t^2 -x\big)^ {-q }   \big[x- (\w-s)^2\big]^{-  1/2}dx \\
 \lesssim&   \big[ (\w+s)^2 -t^2\big]^{-q+\frac12} \big[t^2- (s-\w)^2\big]^{-q+\frac12}
\end{align*}
because 
$
  \int_{a} ^{b}       (  b -x )^ {-q }    (x- a)^{-  1/2}dx = \text{Beta}(1/2, 1-q) (b-a)^{-q+\frac{1}{2}}
$ 
for any $0 \leq a<b<\infty$ and for any $q<1$.
\end{itemize}
Combining {\bf (A1)} and  {\bf (A2)}, we have obtained
 \begin{equation} \label{I_A}
 \mathbf{I}_{\bf A} \lesssim \big[ t^2 - (s+\w)^2 \big]^{1-2q } {\bf 1}_{\{ t> s+\w  \}} +  {\bf 1}_{\{\vert s-\w \vert < t < s+\w  \}} \big[ (\w+s)^2 -t^2\big]^{\frac{1-2q}{2}} \big[t^2- (s-\w)^2\big]^{\frac{1-2q}{2}}.
 \end{equation}

\bigskip

\noindent{\bf Estimation of $\mathbf{I}_{\bf B}$.}   In this case, $\sqrt{x}<s-\w$ and $\w <s$, then  
\[
s^2- (\sqrt{x }+\w)^2 > (s-\w)^2-x > 0.
\]
Therefore, $\mathcal{S}_q(x)\lesssim   \big[(s-\w)^2-x \big]^{-q}  $ and the quantity $\mathbf{I}_{\bf B}$ can be bounded as follows
\begin{align} \notag
 {\bf I_B} & =  \int_0^{(s-\w)^2} (t^2-x)^{-q} \mathcal{S}_q(x)dx   \lesssim  \int_0^{(s-\w)^2} (t^2-x)^{-q}     \big[(s-\w)^2-x \big]^{-q}dx\\
 &   \lesssim \big[ t^2 - (s-\w)^2\big]^{1-2q},    \label{I_B}
\end{align}
 because for any $0<a<b <\infty$ and any $p,q\in(1/2, 1)$
 \begin{align} 
 \int_0^a (b-x)^{-p} (a-x)^{-q}dx & = \int_0^a (b-a+y)^{-p} y^{-q}dy = (b-a)^{1-p-q}\int_0^{\frac{a}{b-a}} y^{-q}(1+y)^{-p}dy  \notag \\
 &\leq (b-a)^{1-p-q}\int_0^{\infty} y^{-q}(1+y)^{-p}dy  \lesssim (b-a)^{1-p-q}. \notag   
 \end{align}
Our proof is done by combining the  estimates  \eqref{I_A} and \eqref{I_B} to get \eqref{qq}.   \qedhere

 \end{proof}

 Now let us apply Lemma \ref{LEM1} to prove Lemma \ref{LEM2}.

 \begin{proof}[Proof of  Lemma \ref{LEM2}]  Put $\mu = (t-r)\wedge (r-s)$ and $\nu=(t-r)\vee (r-s)$ and assume $\mu \neq \nu$. We apply Lemma \ref{LEM1}  to write 
 \begin{align*}
\big( G_{t-r}^{2q} \ast G_{r-s}^{2q}(z) \big)^{\delta}& \lesssim \Big({\bf 1}_{\{ \w < \mu\}}  \big[ \nu^2 - (\mu-\w)^2\big]^{1-2q} +       \big[ \nu^2 - (\mu+\w)^2 \big]^{1-2q } {\bf 1}_{\{ \nu> \mu+\w  \}} \notag \\
 &\,\,\, +  {\bf 1}_{\{\vert \mu-\w \vert < \nu < \mu+\w  \}} \big[ (\w+\mu)^2 -\nu^2\big]^{-q+\frac12} \big[\nu^2- (\mu-\w)^2\big]^{-q+\frac12}\Big)^{\delta} \\
 & \lesssim  {\bf 1}_{\{ \w < \mu\}}  \big[ \nu^2 - (\mu-\w)^2\big]^{\delta(1-2q) } +       \big[ \nu^2 - (\mu+\w)^2 \big]^{\delta(1-2q)  } {\bf 1}_{\{ \nu> \mu+\w  \}} \notag \\
 &\,\,\, +  {\bf 1}_{\{\vert \mu-\w \vert < \nu < \mu+\w  \}} \big[ (\w+\mu)^2 -\nu^2\big]^{\delta(\frac 12-q)} \big[\nu^2- (\mu-\w)^2\big]^{\delta(\frac 12-q) },
 \end{align*}
 where $\w = \|z\| >0$ and $0> \delta(1-2q) \ge \frac 1q-2  >-1$.     Define 
 \begin{align*}
  K^{(1)}_{s,t}(z):&= \int_s^t dr {\bf 1}_{\{ \w < \mu\}}  \big[ \nu^2 - (\mu-\w)^2\big]^{\delta(1-2q) } \\
  & = \int_s^t dr {\bf 1}_{\{ \w < \mu\}}  \big[ (\nu + \mu-\w)(\nu-\mu+\w)\big]^{\delta(1-2q) }
  \end{align*}
 and note that $t-r > r-s$ if and only if $r < \frac{t+s}{2}$. Then, by exact computations and decomposing the integral in the
 intervals $[s, (t+s)/2] $ and $ [(t+s)/2,t]$, yields
  \begin{align}   \notag
  K^{(1)}_{s,t}(z)&= {\bf 1}_{\{ \w < \frac{t-s}{2}\}} \int_{s+\w}^{ (t+s)/2 } (t-s-\w)^{\delta(1-2q) }  (t+s+\w - 2r)^{\delta(1-2q) } dr \\ \notag
  &\qquad  + {\bf 1}_{\{ \w < \frac{t-s}{2}\}}  \int_{ (t+s)/2 }^{t-\w}  (t-s-\w)^{\delta(1-2q) }  ( 2r + \w -  t-  s)^{\delta(1-2q)} dr \\ \notag
  &= 2\times  {\bf 1}_{\{ \w < \frac{t-s}{2}\}}  (t-s-\w)^{\delta(1-2q) } \frac{1}{2(\delta(1-2q)+1)} \\  \notag
  & \qquad \times \left[    (t-s-\w)^{\delta(1-2q)+1} - \w^{\delta(1-2q)+1} \right] \\ \notag
  &  \le   \frac{ (t-s)^{\delta(1-2q)+1}} { \delta(1-2q)+1}  {\bf 1}_{\{ \w < \frac{t-s}{2}\}}  (t-s-\w)^{\delta(1-2q) } \\ \notag
&  \lesssim  (t-s)^{\delta(1-2q)+1}(t-s)^{\delta(1-2q)}   {\bf 1}_{\{ \w < \frac{t-s}{2}\}} \\ \label{K1}
&  \lesssim  (t-s)^{\delta(1-2q)+1}\big[ (t-s)^2- \|z\|^2\big]^{\delta(\frac 12-q)} {\bf 1}_{\{ \|z\| <t-s  \}}.
     \end{align}
 By the same arguments, we can get
 \begin{align}    \notag
   K^{(2)}_{s,t}(z):&=  \int_s^tdr   \big[ \nu^2 - (\mu+\w)^2 \big]^{\delta(1-2q) } {\bf 1}_{\{ \nu> \mu+\w  \}}  \\ \notag
   &= \int_s^tdr   \big[ (\nu + \mu+\w)(\nu-\mu-\w) \big]^{\delta(1-2q) } {\bf 1}_{\{ \nu> \mu+\w  \}}  \\ \notag
   &=  {\bf 1}_{\{ t-s> \w  \}} (t-s+\w)^ {\delta(1-2q) } \int_s^{(t+s-\w)/2} \big( t+s-2r-\w\big)^{\delta(1-2q) } dr \\ \notag
   &\qquad +  {\bf 1}_{\{ t-s> \w  \}} (t-s+\w)^ {\delta(1-2q) }  \int_{(t+s+\w)/2}^t \big( 2r-s-t-\w\big)^{\delta(1-2q) } dr \\ \notag
   &= {\bf 1}_{\{ t-s> \w  \}} (t-s+\w)^{\delta(1-2q) }     \frac{1}{2(\delta(1-2q)+1)} (t-s-\w)^{\delta(1-2q)+1} \times 2 \\ \label{K2}
   &\lesssim    (t-s)^{\delta(1-2q)+1} \big[ (t-s)^2- \|z\|^2\big]^{\delta(\frac 12-q)} {\bf 1}_{\{ \|z\| <t-s  \}}. 
 \end{align}
 Similarly,  we first write 
  \begin{align*}
   K^{(3)}_{s,t}(z):&=\int_s^tdr   {\bf 1}_{\{\vert \mu-\w \vert < \nu < \mu+\w  \}} \big[ (\w+\mu)^2 -\nu^2\big]^{\delta(\frac 12-q)} \big[\nu^2- (\mu-\w)^2\big]^{\delta(\frac 12-q)} \\
   &= \int_s^tdr   {\bf 1}_{\{   \nu-\mu <\w < \mu+\nu      \}} \big[ (\mu+\nu)^2 - \w^2 \big]^{\delta(\frac 12-q)} (\w+\mu-\nu)^{\delta(\frac 12-q)} (\w+\nu-\mu)^{\delta(\frac 12-q)} \\
   &=\big[ (t-s)^2 - \w^2 \big]^{\delta(\frac 12-q)}   \int_s^tdr   {\bf 1}_{\{   \nu-\mu <\w < \mu+\nu      \}} (\w+\mu-\nu)^{\delta(\frac 12-q)} (\w+\nu-\mu)^{\delta(\frac 12-q)}.
   \end{align*}
Recall $t-r > r-s$ if and only if $r < \frac{t+s}{2}$. Then
 \begin{align*}
 &\quad  \int_s^{(t+s)/2} dr   {\bf 1}_{\{   \nu-\mu <\w < \mu+\nu      \}} (\w+\mu-\nu)^{\delta(\frac 12-q)} (\w+\nu-\mu)^{\delta(\frac 12-q)} \\
 &= {\bf 1}_{\{   \w < t-s     \}}  \int_{\frac{t+s-\w}{2}  }^{\frac{t+s}{2} } dr \, (\w - t - s + 2r)^{\delta(\frac 12-q)} (\w+t+s-2r)^{\delta(\frac 12-q)} \\
 &= {\bf 1}_{\{   \w < t-s     \}}  2^{\delta(1-2q)} \int_a^b (r-a)^{-\delta(\frac 12-q)} (c-r)^{\delta(\frac 12-q)}dr,
  \end{align*}
 where $a= \dfrac{t+s-\w}{2}<b=\dfrac{t+s}{2} < c=\dfrac{t+s+\w}{2}$. It is easy to show that 
 \begin{align*}
  \int_a^b (r-a)^{\delta(\frac 12-q)} (c-r)^{\delta(\frac 12-q)}dr&  = (c-a)^{\delta(1-2q)+1} \int_0^{\frac{b-a}{c-a}} t^{\delta(\frac 12-q)} (1-t)^{\delta(\frac 12-q)} dt\\
  & \leq (c-a)^{\delta(1-2q)+1} \int_0^{1} t^{\delta(\frac 12-q)} (1-t)^{\delta(\frac 12-q)} dt\\
  &=  \text{Beta}(\delta(\frac 12-q)+1, \delta(\frac 12-q)+1))  (c-a)^{\delta(1-2q)+1}.
  \end{align*}
 Therefore,
  \begin{align*}
 & \int_s^{(t+s)/2} dr   {\bf 1}_{\{   \nu-\mu <\w < \mu+\nu      \}} (\w+\mu-\nu)^{\delta(\frac 12-q)} (\w+\nu-\mu)^{\delta(\frac 12-q)}  \\ 
 &\qquad  \lesssim {\bf 1}_{\{   \w < t-s     \}}  \w^{\delta(1-2q)+1} \leq (t-s)^{\delta(1-2q)+1} {\bf 1}_{\{ \|z\| <t-s  \}}.
   \end{align*}
 In the same manner, we can get
  \begin{align}
 &  \int_{(t+s)/2}^{t} dr   {\bf 1}_{\{   \nu-\mu <\w < \mu+\nu      \}} (\w+\mu-\nu)^{\delta(\frac 12-q)} (\w+\nu-\mu)^{\delta(\frac 12-q)} 
 \notag \\
 &\qquad= {\bf 1}_{\{   \w < t-s     \}}  \int_{\frac{t+s}{2} }^{\frac{t+s+\w}{2}}  dr \, (\w - t - s + 2r)^{\delta(\frac 12-q)} (\w+t+s-2r)^{\delta(\frac 12-q)} \notag  \\
 &\qquad=  {\bf 1}_{\{   \w < t-s     \}} 2^{\delta(1-2q)} \int_b^c (c-r)^{\delta(\frac 12-q)} (r-a)^{\delta(\frac 12-q)}dr\\  \notag
 &\qquad  \leq  {\bf 1}_{\{   \w < t-s     \}} 2^{\delta(1-2q)} (c-a)^{\delta(1-2q)+1}  \text{Beta}(\delta(\frac 12-q)+1, \delta(\frac 12-q)+1) \\ \notag
&\qquad \lesssim  {\bf 1}_{\{   \w < t-s     \}} \w^{\delta(1-2q)+1} \leq (t-s)^{\delta(1-2q)+1}{\bf 1}_{\{ \|z\| <t-s  \}},
   \end{align}
where  $a= \dfrac{t+s-\w}{2}<b=\dfrac{t+s}{2} < c=\dfrac{t+s+\w}{2}$.
 Thus,   we obtain 
 \begin{equation} \label{K3}
    K^{(3)}_{s,t}(z) \lesssim   (t-s)^{\delta(1-2q)+1}  \big[ (t-s)^2 - \|z\|^2\big]^{\delta(\frac 12-q)}_+ {\bf 1}_{\{ \|z\| <t-s  \}},
    \end{equation}
     with $\delta(q-\frac 12) \le 1-\frac 1{2q} \in(0,\frac12)$. Combining the  estimates \eqref{K1}, \eqref{K2} and \eqref{K3} allows us to finish the proof. 
 \qedhere
  \end{proof}

\end{document}